\documentclass[12pt]{amsart}
\usepackage{amssymb,amsmath,amsthm}
\usepackage{epsfig}

\newtheorem{theorem}{Theorem}[section]
\newtheorem{lemma}[theorem]{Lemma}
\newtheorem{remark}[theorem]{Remark}
\newtheorem{definition}[theorem]{Definition}

\newtheorem{proposition}[theorem]{Proposition}
\newtheorem{corollary}[theorem]{Corollary}

\newtheorem{observation}[theorem]{Observation}

\newcommand\Z{{\mathbb{Z}}}
\newcommand\R{{\mathbb{R}}}
\title{Cocycle invariants of  codimension 2-embeddings of manifolds }
\author{
J\'ozef H. Przytycki, Witold Rosicki}
\begin{document}
\maketitle
\centerline{December 27, 2006 -- October 11, 2013}
\begin{quotation}
ABSTRACT. \baselineskip=10pt
We consider the classical problem of a position of $n$-dimensional
manifold $M^{n}$ in  $\R^{n+2}$.\\
We show that we can define the fundamental $(n+1)$-cycle and the shadow 
fundamental $(n+2)$-cycle for a fundamental quandle of a knotting $M^n \to \R^{n+2}$.\\
In particular, we show that for any fixed quandle, quandle 
coloring, and shadow quandle coloring, of a diagram of 
$M^n$ embedded in  $\R^{n+2}$ we have $(n+1)$- and $(n+2)$-(co)cycle invariants   
(i.e. invariant under Roseman moves). 
\\
\end{quotation}
\ \\
\ \\

\tableofcontents
\section{Introduction}
\markboth{\hfil{\sc J\'ozef H. Przytycki and Witold Rosicki }\hfil}
{\hfil{\sc Cocycle invariants of codimension 2 embeddings}\hfil}

We consider the classical problem of a position of $n$-dimensional 
manifold $M^{n}$, in an $(n+2)$-manifold $W^{n+2}$. 
The classical case deals with $W^{n+2}=\R^{n+2}$, however our method is 
also well suited for a more general case of 
 $W^{n+2}$ being a product of 
an oriented $(n+1)$-manifold $F^{n+1}$
and the interval or the twisted interval bundle over an 
unorientable $(n+1)$-manifold $F^{n+1}$, as we have, in these cases,
 the natural projection of $W^{n+2}$ onto $F^{n+1}$ (see Section \ref{Section 7}). 

Historically, the main tool to study  an $n$-knotting, $f:M^{n} \to \R^{n+2}$,
 was the fundamental group of the knotting complement in $\R^{n+2}$. 
This was greatly extended by applying quandle colorings and (co)cycle 
invariants. We follow, to some extent, the exposition by Carter, Kamada, and Saito in \cite{CKS-3},
generalizing on the way the case of surfaces in $\R^4$ to general $n$-knottings.
The paper is organized as follows:
at the beginning of the first section we recall the definition of a rack and quandle and their (co)homology.
Then we give a short introduction to diagrams, $D_M$, of knottings, and rack and quandle colorings 
of $D_M$. Furthermore, we analyze shadow rack and quandle colorings. In the second part of the first 
section we define $(n+1)$- and $(n+2)$-chains associated to rack and quandle colorings.
In the second section we prove that our chains are, in fact, cycles.
In the third section we show that the set of colorings by a given quandle is a topological invariant.
The first step in this direction is given by comparing two definitions of the fundamental rack and 
quandle of a knotting (one from the diagram and one abstract).
In the fourth section we show that the homology classes represented by cycles constructed for diagrams of knottings
are topological invariants. Here we carefully consider Roseman's pass move (generalized third Reidemeister move).
We offer various versions of cycle invariants of knottings in particular taking into account the fact that 
a quandle acts on the space of quandle colorings. We complete Section \ref{Section 4} by expressing 
invariants in the language of cohomology (cocycle invariants). 
In the fifth section we generalize previous results to twisted homology and cohomology.
In the sixth section we discuss in detail general position projection of $n$-knotting and 
Roseman moves; this is a service section to the previous considerations. 

Finally, in Section \ref{Section 7} we give a short overview of a knotting in $F^{n+1}\bar\times [0,1]$, 
and in Section \ref{Section 8} we discuss possibility of working with a Yang-Baxter operator in place 
of a right self-distributive operation.

  \subsection{Quandles and quandle homology}
We give here a short historical introduction to distributive structures and to homology based 
on distributivity.  
The word {\it distributivity} was coined in 1814 by Francois Servois. 
C.S. Peirce in 1880  emphasized the importance of (right) self-distributivity in algebraic
structures \cite{Peir}. The first explicit example of a non-associative self-distributive system was given 
by Ernst Schr\"oder in 1887 \cite{Schr,Deh}.
The detailed study of distributive structures started with the 1929 paper by 
C.~Burstin and W.~Mayer\footnote{Walter Mayer is well known for Mayer-Vietoris sequence and for being 
assistant to Einstein at Institute for Advanced Study, Princeton.} \cite{B-M}. 
The first book partially devoted to distributivity is by Anton Sushkevich, 1937 \cite{Sus}. 

\begin{definition}\label{Definition 1.1} Let $(X;*)$ be a magma, that is a set with binary operation, then:
\begin{enumerate}
\item[(i)] If $*$ is right self-distributive, that is, $(a*b)*c=(a*c)*(b*c)$,
then $(X;*)$ is called a RDS or a shelf (the term coined by Alissa Crans  in her
PhD thesis \cite{Cra}).
\item[(ii)] If a shelf $(X;*)$ satisfies the idempotent\footnote{The term coined in 1870 by 
Benjamin Peirce \cite{Pei}, the father of Charles Sanders Peirce.}
 condition, $a*a=a$ for any $a\in X$, then it
is called a {\it right spindle}, or just a spindle (again the term coined by Crans).
\item[(iii)] If a shelf $(X;*)$ has $*$ invertible,\footnote{If $X$ is a set then the set $Bin(X)$ of all 
binary operations on $X$ forms a monoid with composition 
$*_1*_2$ given by $a*_1*_2b=(a*_1b)*_2b$ and the identity element 
$*_0$ given by $a*_0b=a$. Then the condition (ii) is equivalent to invertibility of $*$ in $Bin(X)$. 
If $*$ is invertible, we write $\bar *$ for $*^{-1}$.} 
that is the map $*_b: X \to X$ given by $*_b(a)=a*b$ is 
a bijection for any $b\in X$, then it is called a {\it rack}\footnote{The term wrack, like 
in ``wrack and ruin", of J.H.Conway from 1959, was modified to rack in \cite{F-R}). 
The main example considered in 1959 by Conway and Wraith was a group $G$ with a $*$ operation given
by conjugation, that is, $a*b=b^{-1}ab$ \cite{C-W}.}.
\item[(iv)] If a rack $(X;*)$ satisfies the idempotent condition, then it is called a {\it quandle} (the term
coined in Joyce's PhD thesis of 1979 \cite{Joy-1}). 
\item[(v)] If a quandle $(X;*)$ satisfies $(a*b)*b=a$ then it is called  {\it kei} or
an involutive quandle. The term kei (\psfig{figure=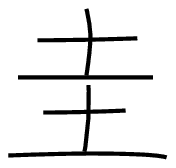,height=0.3cm})
was coined in a pioneering paper by M.Takasaki in 1942 \cite{Tak}\footnote{Mituhisa Takasaki worked at Harbin Technical 
University in 1940, 
likely as an assistant to K\^oshichi Toyoda. Both perished when Red army entered Harbin in 1945.
Takasaki was considering keis associated to abelian groups, that is
the Takasaki kei (or quandle) of an abelian group $H$, denoted by $T(H)$ satisfies $a*b= 2b-a$.}
\end{enumerate}
\end{definition}

Three axioms of a quandle arise (see \cite{Joy-2,Matv}) as an algebraic reflection of three Reidemeister moves 
on link diagrams. Idempotent condition corresponds to the first move, invertibility to the second, 
and right self-distributivity to the third move. In Figure 1.2 we illustrate how right self-distributivity 
is arising from the third Reidemeister move, $R_3$ when  we color (label) arcs of the diagram 
by elements of $X$ according to the following rule (Figure 1.1): \ \\ \ \\

 \centerline{\psfig{figure=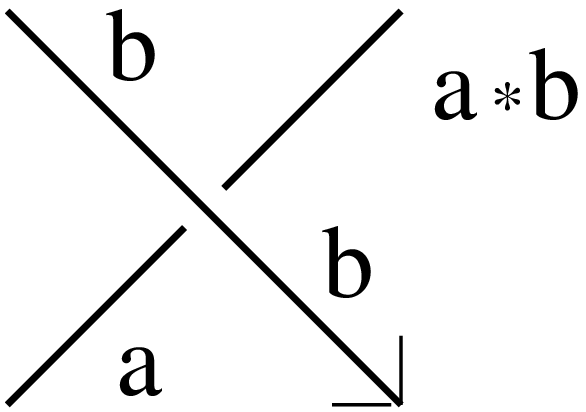,height=3.4cm}}.
\ \\ 
\centerline{Figure 1.1; magma coloring of a crossing}
\ \\ \ \\
\centerline{\psfig{figure=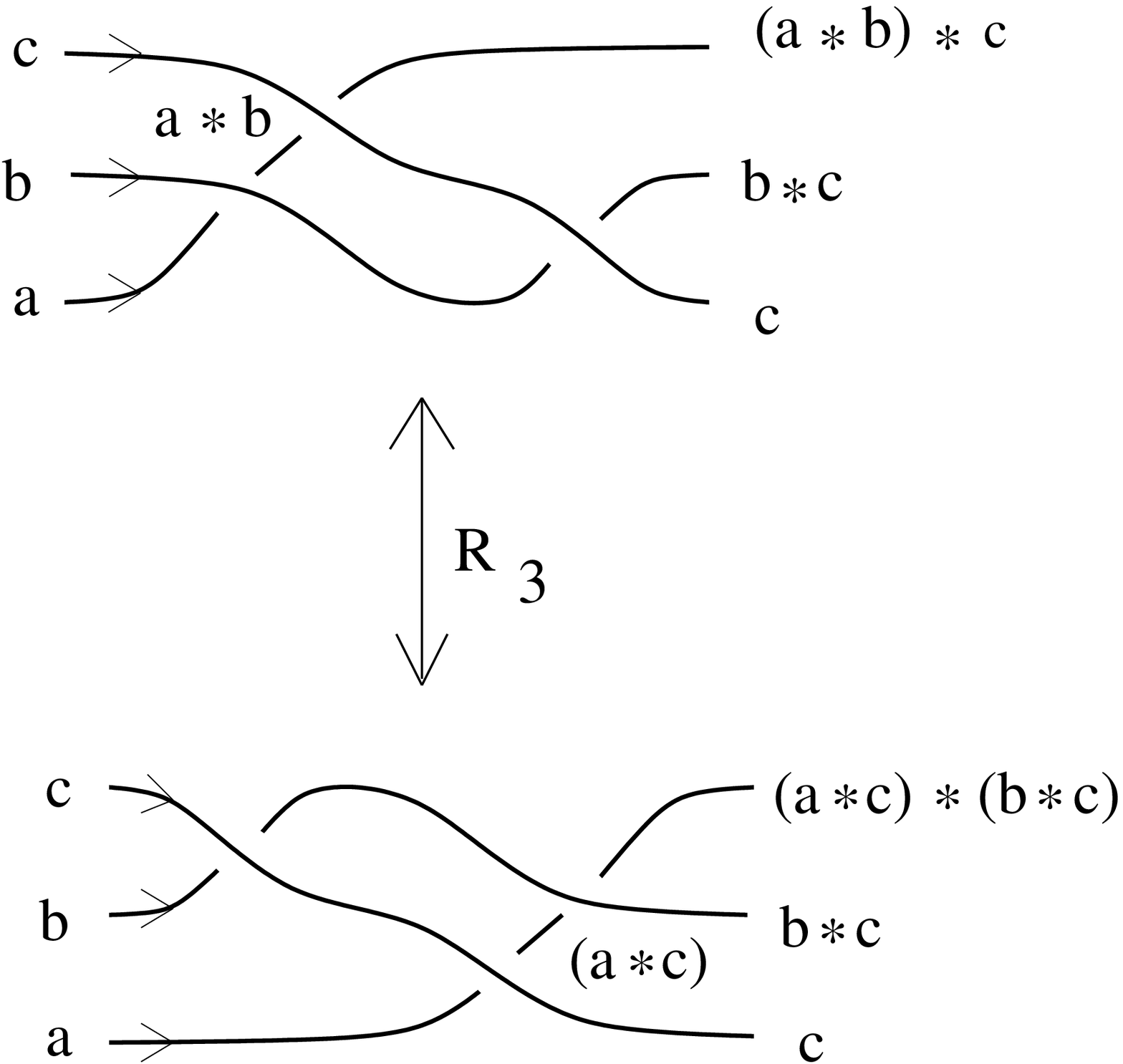,height=5.1cm}}
\\ \ \\
\centerline{Figure 1.2;  Distributivity from $R_3$}

(Co)homology of racks was introduced by Fenn, Rourke and Sanderson between  1990 and 1995, 
 \cite{FRS-1, Fenn}. Quandle (co)homology was constructed by Carter,
Kamada, and Saito (compare \cite{CKS-3}).
Their motivation was to associate to any link diagram an its quandle coloring elements (cocycles) of 
quandle cohomology. In \cite{CKS-3} it is done in details for knotting $f:M^n \to \R^{n+2}$ for $n=1$ or $2$, 
and we start our paper from the definition for any $n$ (essentially following \cite{CKS-3}).

We give here the definition of rack, degenerate and quandle (co)homology
after \cite{CKS-3}.
\begin{definition}\label{Definition 1.2}
\begin{enumerate}
\item[(i)]
For a given rack $X$ let $C^R_n(X)$ be the free abelian
group generated by $n$-tuples $(x_1,x_2,...,x_n)$ of elements of a rack $X$;
in other words $C^R_n(X) = {\Z}X^n = ({\Z}X)^{\otimes n}$. Define a boundary
homomorphism $\partial: C^R_n(X) \to C^R_{n-1}(X)$ by:
$$\partial(x_1,x_2,...,x_n) = $$
$$\sum_{i=2}^n (-1)^i((x_1,...,x_{i-1},x_{i+1},...,
x_n) - (x_1*x_i,x_2*x_i,...,x_{i-1}*x_i,x_{i+1},...,x_n)).$$
$(C^R_*(X),\partial)$ is called a rack chain complex of $X$.
\item[(ii)] Assume that $X$ is a quandle, then we have a subchain
complex $C^D_n(X) \subset C^R_n(X)$ generated by $n$-tuples $(x_1,...,x_n)$
with $x_{i+1}=x_i$ for some $i$. The subchain complex $(C^D_n(X),\partial)$
is called a degenerated chain complex of a quandle $X$.
\item[(iii)] The quotient chain complex $C^Q_n(X)=C^R_n(X)/C^D_n(X)$ is
called the quandle chain complex. We have the short exact sequence
of chain complexes:

$$ 0 \to C^D_n(X) \to C^R_n(X) \to C^Q_n(X)\to 0.$$
\item[(iv)] The homology of rack, degenerate and quandle chain complexes
are called rack, degenerate and quandle homology, respectively. We have
the long exact sequence of homology of quandles:
$$ ...\to H^D_n(X) \to H^R_n(X) \to H^Q_n(X)\to H^D_{n-1}(X) \to ...$$
\end{enumerate}
\end{definition}
R.~Litherland and S.~Nelson \cite{L-N}
proved that the short exact sequence from (iii) splits respecting
the chain maps. $\alpha: C^Q_n(X) \to C^R_n(X)$ is given, 
in the notation introduced in \cite{N-P}, by:
$$\alpha(x_1,x_2,x_3,...,x_n) = (x_1,x_2-x_1,x_3-x_2,...,x_n-x_{n-1}).$$
(recall that in our notation $(x_1,x_2-x_1,x_3-x_2,...,x_n - x_{n-1}) =
x_1\otimes (x_2-x_1)\otimes (x_3-x_2)\otimes \cdots \otimes
(x_n-x_{n-1}) \in C^R_n(X)$).
In particular, $\alpha$ is a chain complex monomorphism and
$H^R_n(X) = H^D_n(X) \oplus \alpha_*(H^Q_n(X))$.\\
In a recent paper \cite{P-P-2} it is demonstrated that degenerate homology of a quandle can 
be reconstructed from the quandle (normalized) homology of the quandle by a version of a K\"unneth formula.

We define cohomology in a standard way (we follow \cite{CKS-3}):
\begin{definition}\label{Definition 1.3}
For an abelian group $A$ define the cochain complexes $C^*_W(X,A)=Hom(C_*^W,A)$. Here, 
$W=D,R,Q$ so we describe all cases (degenerate, rack and quandle) . 
We define $\partial^n: C^n \to C^{n+1}$ in the usual way, that is for $c\in C^n_W(X;A)$ we have: 
$$\partial^n(c)((x_1,...,x_n,x_{n+1})= c(\partial_n((x_1,...,x_n,x_{n+1})).$$
Cohomology groups are defines as usual as $H^n_W(X,A)= ker \partial^n/im (\partial^{n-1})$.
\end{definition}

Another useful definition is that of right $X$-set, in particular the set of colorings of 
knotting diagram by a quandle $X$ will be a right $X$-quandle-set.
\begin{definition}\label{Definition 1.4} 
Let $E$ be a set, $(X;*)$ a magma and $*:E\times X \to E$ an action of $X$ on $E$
(we can use the same symbol $*$ for operation in $X$ and the action as it unlikely leads to confusion). Then
\begin{enumerate}
\item[(i)] If $(X;*)$ is a shelf and $(e*x_1)*x_2= (e*x_2)*(x_1*x_2)$ then $E$ is a right $X$-shelf-set.
\item[(ii)] If $(X;*)$ is a rack and $E$ a right $X$-shelf-set and additionally the map 
$*_b: E \to E$ given by $*_b(e)= e*b$ is invertible then we say that $E$ is a right $X$-rack-set.
In the case $X$ is a quandle we will say that $E$ is a right $X$-quandle-set.
\end{enumerate}
\end{definition}
The basic example of a quandle (resp. rack or shelf) right $X$-quandle- (resp. rack-, shelf-)-set 
is $E=X^n$ with $(x_1,...,x_n)*x= (x_1*x,...,x_n*x)$. One should also add that 
for a given $X$-rack-set $E$ one can define homology (as before) by assuming $C_n(X,E)= E\times X^n$, 
and $d_i^{(*)}(e,x_1,...,x_n)=(e*x_i,x_1*x_i,...,x_{i-1}*x_i,x_{i+1},...,x_n)$.
If $E$ has one point, so the action is trivial, we reach exactly homology of Definition \ref{Definition 1.2}(i).

\begin{observation}\label{Observation 1.5}
The chain groups $C_n(X)=ZX^n$ of a rack chain complex are $X$-rack-sets. Furthermore, the action 
$*_x: C_n(X) \to C_n(X)$, given by $*_x(x_1,...,x_n)= (x_1,...,x_n)*x=(x_1*x,...,x_n*x)$, 
is a chain map for any $x$, inducing the identity on homology. It is well know but important fact 
(see e.g. \cite{CJKS,N-P}) and we use it in Theorems \ref{Theorem 4.4}, \ref{Theorem 4.8}, and \ref{Theorem 5.5}. 
To prove this fact we use chain homotopy $(-1)^{n+1}h_x: C_n\to C_{n+1}$,
where $h_x(x_1,...,x_n)= (x_1,...,x_n,x)$. We check directly that 
$\partial_{n+1}(-1)^{n+1}h_x+ (-1)^{n}h_x\partial_{n}= Id - *_x$.
If we consider $\partial^T= t\partial^{(*_0)}-  \partial^{((*)}$, as is the case in twisted rack or quandle 
homology, we obtain that $*_x$ induces $t\cdot Id$ on homology. We use it in Definition \ref{Definition 5.3}.

\end{observation}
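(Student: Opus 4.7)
The plan is to verify the two claims by direct calculation. First, that $*_x$ is a chain map follows from unpacking $\partial \circ *_x(x_1,\ldots,x_n)$ and comparing it with $*_x \circ \partial(x_1,\ldots,x_n)$: on each summand of $\partial$, the deletion term $(x_1,\ldots,\hat{x_i},\ldots,x_n)$ is transported to $(x_1*x,\ldots,\widehat{x_i*x},\ldots,x_n*x)$, and the action term $(x_1*x_i,\ldots,x_{i-1}*x_i,x_{i+1},\ldots,x_n)$ is transported to $((x_1*x_i)*x,\ldots,(x_{i-1}*x_i)*x,x_{i+1}*x,\ldots,x_n*x)$. Right self-distributivity $(a*b)*x = (a*x)*(b*x)$ rewrites $(x_j*x_i)*x = (x_j*x)*(x_i*x)$, so $*_x$ applied before $\partial$ gives the same alternating sum as $\partial$ applied after $*_x$. (For the twisted boundary $\partial^T = t\partial^{(*_0)} - \partial^{(*)}$ the same computation works since the $*_0$ part is obviously equivariant.)

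Next I would establish the chain-homotopy identity $\partial_{n+1}(-1)^{n+1}h_x + (-1)^n h_x \partial_n = \mathrm{Id} - *_x$ by a single explicit computation. Writing $x_{n+1}:=x$, the boundary of $h_x(x_1,\ldots,x_n)=(x_1,\ldots,x_n,x)$ splits as
\[
\partial_{n+1}(x_1,\ldots,x_n,x) \;=\; A \;+\; (-1)^{n+1}\bigl[(x_1,\ldots,x_n) - (x_1*x,\ldots,x_n*x)\bigr],
\]
where $A$ collects the terms with $2\le i\le n$. But $A$ is exactly $h_x\partial_n(x_1,\ldots,x_n)$, because appending $x$ at the end commutes with deleting or acting on an earlier coordinate. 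Hence
\[
(-1)^{n+1}\partial_{n+1}h_x + (-1)^n h_x\partial_n \;=\; \bigl((-1)^{n+1}+(-1)^n\bigr)A \;+\; \bigl(\mathrm{Id} - *_x\bigr) \;=\; \mathrm{Id}-*_x,
\]
proving that $*_x$ is homotopic to $\mathrm{Id}$ and hence induces the identity on $H^R_*(X)$; passing to the quandle quotient and to the degenerate subcomplex is automatic since $h_x$ preserves the condition of having two adjacent equal entries.

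For the twisted case I would re-run the same chain-homotopy argument but with $\partial$ replaced by $\partial^T$. The inner terms of $\partial^T h_x$ again cancel against $h_x \partial^T$; the only surviving contribution comes from $i=n+1$, where the $*_0$-part gives $t\cdot(x_1,\ldots,x_n)$ (since acting by $*_0$ on the last coordinate is the identity) and the $*$-part gives $-(x_1*x,\ldots,x_n*x)$. Thus the homotopy now yields $t\cdot\mathrm{Id} - *_x$, so $*_x$ acts as multiplication by $t$ on twisted homology, as needed in Definition~\ref{Definition 5.3}.

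No step looks hard; the only thing to be careful about is sign bookkeeping in the homotopy formula and checking that, in the twisted setting, the $i=n+1$ summand of $\partial^{(*_0)}$ really contributes the identity rather than some action term, which is exactly why the extra factor of $t$ appears. The calculation is, as the observation indicates, essentially the one already recorded in \cite{CJKS,N-P}, and I would cite those references for the standard version while writing out the signs for the twisted variant.
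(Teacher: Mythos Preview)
Your proposal is correct and follows exactly the approach indicated in the paper: the paper merely states the chain homotopy $(-1)^{n+1}h_x$ with $h_x(x_1,\ldots,x_n)=(x_1,\ldots,x_n,x)$ and the identity $\partial_{n+1}(-1)^{n+1}h_x + (-1)^n h_x\partial_n = \mathrm{Id} - *_x$, saying only ``we check directly,'' while you actually carry out that direct check (and the twisted analogue). Your additional remark that $h_x$ preserves the degenerate subcomplex, so the result descends to quandle homology, is a useful detail the paper leaves implicit.
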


We need yet another observation that if $X$ is a quandle and $E$ is an $X$-quandle-set then $X\cup E$ has 
also a natural quandle structure.
\begin{observation}\label{Observation 1.6}
Let $(X;*)$ be a shelf and $E$ an  $X$-shelf-set (with a right action of $X$ on $E$ also denoted by $*$, then
$X\sqcup E$ is also a shelf with $*$ operation $a*y=a$ for any $a\in X\sqcup E$, and $y\in E$. 
Furthermore,
 if $(X;*)$ is a rack (resp. spindle, or quandle) then $(X\sqcup E;*)$ is also a rack (resp. spindle, or quandle).

Then we observe that a chain complex $C_n(X,E)$ is a subchain complex of $C_n(X\sqcup E)$.
\end{observation}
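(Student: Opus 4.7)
The plan is a direct verification, with two ingredients: first, the algebraic axioms on $X \sqcup E$, and second, the compatibility of the boundary maps under the inclusion of chain groups. Both reduce to case analysis, and the main thing to keep track of is that the collapsing rule $a * y = a$ for $y \in E$ makes every product whose right argument lies in $E$ absorb that argument; the only subtlety is a harmless degree shift in the chain complex statement.

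For the shelf axiom $(a*b)*c = (a*c)*(b*c)$ on $X \sqcup E$, I would split on the locations of $b$ and $c$. If $c \in E$, both sides immediately collapse to $a*b$, since $b*c = b$ and $a*c = a$. If $c \in X$ and $b \in E$, then $a*b = a$ while $b*c \in E$, so both sides simplify to $a*c$. When $b, c \in X$, there are two subcases: if $a \in X$, distributivity is the shelf axiom on $X$; if $a \in E$, it is precisely the $X$-shelf-set axiom from Definition \ref{Definition 1.4}(i). For the additional structure, idempotency $a * a = a$ is either the idempotency of the spindle/quandle $X$ when $a \in X$, or the collapsing rule when $a \in E$. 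Invertibility of $*_b$ is immediate when $b \in E$ (the map is the identity), and when $b \in X$ the map $*_b$ preserves the partition $X \sqcup E$ and restricts to a bijection on each part, using the rack axiom on $X$ and Definition \ref{Definition 1.4}(ii) on $E$.

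For the second assertion, I would make the inclusion $E \times X^n \hookrightarrow (X \sqcup E)^{n+1}$ explicit by identifying a generator $(e, x_1, \ldots, x_n) \in C_n(X, E)$ with the corresponding generator of $C^R_{n+1}(X \sqcup E)$ (absorbing a degree shift of one into the convention). Applying the boundary from Definition \ref{Definition 1.2}(i) and reindexing $i = 2, \ldots, n+1$ as $j = i - 1$, the first coordinate $e$ is never deleted, and each summand takes the form $(e, x_1, \ldots, \widehat{x_j}, \ldots, x_n) - (e * x_j, x_1 * x_j, \ldots, x_{j-1} * x_j, x_{j+1}, \ldots, x_n)$. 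The second term is exactly the face map $d^{(*)}_j$ described after Definition \ref{Definition 1.4}, and the first is its unstarred analogue; in particular the boundary stays inside the image of $E \times X^{n-1}$. Hence the inclusion is a chain map onto a subchain complex, and agrees up to the grading shift with the rack-set boundary on $C_*(X, E)$. There is no genuine obstacle here; the entire content is the bookkeeping that the absorbing action $a * y = a$ for $y \in E$ makes the extended operation self-distributive and makes the boundary respect the splitting.
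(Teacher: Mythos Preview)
Your proof is correct; the paper states Observation \ref{Observation 1.6} without proof, so your case analysis (splitting on whether $b,c$ lie in $X$ or $E$ for self-distributivity, and checking idempotency and invertibility separately on the two parts) is exactly the routine verification the authors left implicit. Your remark about the degree shift is also accurate: $C_n(X,E)=\mathbb{Z}(E\times X^n)$ sits naturally inside $C^R_{n+1}(X\sqcup E)=\mathbb{Z}(X\sqcup E)^{n+1}$, and the boundary of Definition \ref{Definition 1.2}(i) never deletes the $E$-coordinate, so the subcomplex claim holds with that grading convention.
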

\subsection{Presimplicial module and a weak simplicial module}

We follow here \cite{Lod,Prz-1} and introduce here the notion of a presimplicial and weak simplicial module.
This will simplify our calculation and provide a language for visualization.

\begin{definition}\label{Definition 1.7}
A weak simplicial module $(M_n,d_i,s_i)$ is a collection of
$R$-modules $M_n$, $n\geq 0$, together with face maps, $d_i:M_n\to M_{n-1}$ and degenerate maps
$s_i: M_n\to M_{n+1}$, $0\leq i \leq n$, which satisfy the following properties:
$$ (1) \ \ \  d_id_j = d_{j-1}d_i\ for\ i<j. $$
$$(2)\ \ \ s_is_j=s_{j+1}s_i,\ \ 0\leq i \leq j \leq n, $$
$$ (3) \ \ \ d_is_j= \left\{ \begin{array}{rl}
 s_{j-1}d_i &\mbox{ if $i<j$} \\
s_{j}d_{i-1} &\mbox{ if $i>j+1$}
       \end{array} \right.
$$
$$ (4') \ \ \ d_is_i=d_{i+1}s_i. $$
$(M_n,d_i)$ satisfying (1) is called a presimplicial module and leads to the chain complex
$(M_n,\partial_n)$ with $\partial_n = \sum_{i=0}^n(-1)^id_i$.
\end{definition}
If (4') is replaced by a stronger condition\\
(4)\  $d_is_i=d_{i+1}s_i= Id_{M_n} $ then
$(M_n,d_i,s_i)$ is a (classical) simplicial module. 

The following basic lemma will be used later:
\begin{lemma}\label{Lemma 1.8}
Let $(M_n,d_i)$ be a  presimplicial module then the map $d_0d_0: C_n \to C_{n-2}$ is a chain map,
chain homotopic to zero. In particular, if $d_0d_0=0$ then $(-1)^nd_0$ is a chain map.
\end{lemma}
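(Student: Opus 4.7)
The plan is to reduce everything to one identity,
$$\partial_{n-1}\, d_0 + d_0\, \partial_n \;=\; d_0 \, d_0$$
of maps $M_n \to M_{n-2}$, from which the chain-map property of $d_0 d_0$, the null-homotopy, and the ``in particular'' clause all follow formally.

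I would first prove this identity by direct expansion. Writing $\partial_{n-1} d_0 = \sum_{j=0}^{n-1}(-1)^j d_j d_0$ and applying the presimplicial relation in the form $d_j d_0 = d_0 d_{j+1}$ (obtained from the axiom $d_i d_k = d_{k-1} d_i$ with $i=0$ and $k=j+1$, valid for all $j \ge 0$) term by term, I can pull $d_0$ out on the left and reindex $k=j+1$ to obtain
$$\partial_{n-1} d_0 \;=\; d_0 \sum_{k=1}^{n}(-1)^{k-1} d_k \;=\; -d_0(\partial_n - d_0) \;=\; d_0 d_0 - d_0 \partial_n,$$
which rearranges to the displayed identity. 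This is really the only computational step.

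Once this is in hand, everything else is formal. Reading the identity as $\partial h + h\partial = d_0 d_0$ with $h_n := d_0 \colon M_n \to M_{n-1}$ exhibits $d_0$ itself as a chain homotopy from $0$ to $d_0 d_0$, proving $d_0 d_0$ is null-homotopic. And any map of the form $\partial h + h \partial$ is automatically a chain map: since $\partial^2 = 0$ in any presimplicial module, we have
$$\partial(\partial h + h\partial) \;=\; \partial h \partial \;=\; (\partial h + h\partial)\,\partial,$$
so $d_0 d_0$ commutes with $\partial$.

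For the ``in particular'' clause, if $d_0 d_0 = 0$ then the identity collapses to $\partial_{n-1} d_0 = -d_0 \partial_n$; multiplying by $(-1)^n$ gives
$$\partial_{n-1}\bigl((-1)^n d_0\bigr) \;=\; (-1)^{n-1} d_0 \partial_n \;=\; \bigl((-1)^{n-1} d_0\bigr)\partial_n,$$
which is exactly the commutation making $(-1)^n d_0$ a chain map of degree $-1$. No step is a serious obstacle; the only care needed is sign bookkeeping in the telescoping sum and the reindexing $j \mapsto j+1$ that produces $\partial_n$ from the inner sum.
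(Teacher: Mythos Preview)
Your proof is correct and follows essentially the same approach as the paper: both establish the identity $\partial_{n-1}d_0 + d_0\partial_n = d_0d_0$ by expanding and using the presimplicial relation $d_jd_0 = d_0d_{j+1}$, then read off the null-homotopy and the chain-map statements. The only cosmetic difference is that the paper writes out both sums $d_0\partial_n$ and $\partial_{n-1}d_0$ together and pairs off the cancelling terms $(-1)^i(d_0d_i - d_{i-1}d_0)$, whereas you compute $\partial_{n-1}d_0$ alone and rearrange.
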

\begin{proof}
We think of $d_0: C_n \to C_{n-1}$ as a chain homotopy and we have:
$$d_0\partial_n + \partial_{n-1}d_0 = d_0\sum_{i=0}^n(-1)^id_i + \sum_{i=0}^{n-1}(-1)^id_id_0=
d_0d_0 + \sum_{i=1}^n(-1)^i(d_0d_i - d_{i-1}d_0)  $$
$$ \stackrel{(1)}{=} d_0d_0.$$
In particular, if $d_0d_0=0$ we have $(-1)^nd_0\partial_n = (-1)^{n-1}\partial_{n-1}d_0.$
\end{proof}

For us it is important that rack and quandle homology can be described in the language of 
weak simplicial modules:
\begin{proposition}(\cite{Prz-1})\label{Proposition 1.9}
\begin{enumerate}
\item[(i)]
Let $(X;*)$ be a rack, $C_n=ZX^n$,  $d^{(*_0)}_i: C_n \to C_{n-1}$ is given by 
$d^{(*_0)}_i(x_1,...,x_n)=(x_1,...,x_{i-1},x_{i+1},...,x_n)$,
 and $d^{(*)}_i: C_n \to C_{n-1}$ is given by $d^{(*)}_i(x_1,...,x_n)=(x_1*x_i,...,x_{i-1}*x_i,x_{i+1},...,x_n)$,
and furthermore $d_i= d^{(*_0)} - d^{(*)}_i$,
 then $(C_n(X),d^{(*_0)}_i)$, $(C_n(X),d^{(*)}_i)$, and $(C_n(X),d_i)$ are presimplicial modules.
(we have here shift by one comparing to Definition \ref{Definition 1.7}, that is we start from 1 not from 0,
 but it is not important in our considerations).
\item[(ii)] Assume now that $(X;*)$ is a quandle and degeneracy maps $s_i:C_n(X) \to C_{n+1}(X)$ is given, 
as before, by $s_i(x_1,...,x_n)=(x_1,...,x_{i-1},x_i,x_i,x_{i+1},...,x_n)$. Then
$(C_n(X),d^{(*_0)}_i,s_i)$, $(C_n(X),d^{(*)}_i,s_i)$, and $(C_n(X),d_i,s_i)$ are weak simplicial modules.
\end{enumerate}
\end{proposition}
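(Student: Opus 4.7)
The plan is to verify each simplicial identity directly by tracking what the face and degeneracy maps do on a generator $(x_1,\dots,x_n)$, and to point out precisely which axiom of $(X;*)$ is used at each step. Part (i) will reduce to three instances of the presimplicial identity $d_id_j=d_{j-1}d_i$ for $i<j$; part (ii) will add identities (2), (3), and (4').

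For part (i), I will first handle the deletion map $d^{(*_0)}_i$: $d^{(*_0)}_i d^{(*_0)}_j$ and $d^{(*_0)}_{j-1} d^{(*_0)}_i$ both erase the $i$-th and $j$-th coordinates, giving the classical identity for free. Next I will verify $d^{(*)}_i d^{(*)}_j = d^{(*)}_{j-1} d^{(*)}_i$: writing out both sides on $(x_1,\dots,x_n)$ with $k<i<j$ reduces the equality to
\[
(x_k*x_j)*(x_i*x_j)=(x_k*x_i)*x_j,
\]
which is \emph{exactly} right self-distributivity applied coordinate by coordinate; this is the heart of the argument, and the identity on $(*)$-factors to the right of the $j$-th slot is unchanged since those coordinates are just carried along. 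Finally, for the combined $d_i=d^{(*_0)}_i-d^{(*)}_i$, I will check the two mixed identities
\[
d^{(*_0)}_id^{(*)}_j=d^{(*)}_{j-1}d^{(*_0)}_i,\qquad d^{(*)}_id^{(*_0)}_j=d^{(*_0)}_{j-1}d^{(*)}_i
\]
by the same direct bookkeeping (the $*$-action by $x_j$, resp.\ $x_i$, survives the deletion because $i<j$). Combined with the two previous cases, bilinearity of the identity in $d^{(*_0)}$ and $d^{(*)}$ gives $d_id_j=d_{j-1}d_i$.

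For part (ii), the degeneracy $s_i$ only duplicates the $i$-th coordinate, so identity (2), $s_is_j=s_{j+1}s_i$ for $i\le j$, follows immediately by comparing the two inserted copies of $x_i$ and $x_j$. Identity (3), $d_is_j=s_{j-1}d_i$ for $i<j$ (and the symmetric case $i>j+1$), is verified in the same way as the mixed identities of part (i): deleting and duplicating coordinates at positions that don't interfere commutes, and in the $d^{(*)}$-case the factor that is multiplied against does not lie among the duplicated or deleted coordinates. The only identity where something substantive happens is (4'): for $d^{(*_0)}$ one finds $d^{(*_0)}_is_i=d^{(*_0)}_{i+1}s_i=\mathrm{Id}$, whereas
\[
d^{(*)}_is_i(x_1,\dots,x_n)=(x_1*x_i,\dots,x_{i-1}*x_i,x_i,x_{i+1},\dots,x_n),
\]
\[
d^{(*)}_{i+1}s_i(x_1,\dots,x_n)=(x_1*x_i,\dots,x_{i-1}*x_i,x_i*x_i,x_{i+1},\dots,x_n),
\]
so $d^{(*)}_is_i=d^{(*)}_{i+1}s_i$ requires precisely $x_i*x_i=x_i$, i.e.\ the idempotent (quandle) axiom. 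Note that (4') fails to be the classical identity (4) because $d^{(*)}_is_i$ is not the identity; this is why the structure is only \emph{weak} simplicial.

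The main obstacle is purely notational: keeping the index shifts straight in the second half of each face-map formula (the coordinates to the right of position $j$, which survive untouched, versus those to the left that pick up a $*x_j$ or $*x_i$). Once the three face-map cases of (i) are laid out carefully, (ii) is almost mechanical, and the appearance of self-distributivity in (i) and idempotency in (4') are the only two places where algebraic axioms are invoked.
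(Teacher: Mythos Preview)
Your proof is correct and is the standard direct verification. The paper itself does not supply a proof of this proposition; it merely cites \cite{Prz-1}, so there is no alternative argument in the paper to compare against. Your identification of the two places where algebra is used --- right self-distributivity for $d^{(*)}_id^{(*)}_j=d^{(*)}_{j-1}d^{(*)}_i$ and idempotency for $d^{(*)}_is_i=d^{(*)}_{i+1}s_i$ --- is exactly the content of the result, and your observation that $(C_n,d^{(*_0)}_i,s_i)$ is in fact a genuine simplicial module (identity (4) holds, not just (4')) is also correct.

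One small remark: although the proposition is stated for a rack, your verification of part (i) uses only right self-distributivity, so the presimplicial structure holds already for a shelf; invertibility of $*$ plays no role. This is consistent with Remark 1.10(i) in the paper, where one-term distributive homology is discussed for shelves.
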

\begin{remark}\label{Remark 1.10}
\begin{enumerate}
\item[(i)] The homology related to $(C_n(X),d^{(*)}_i)$ is called one term distributive homology and 
it is studied in \cite{Prz-1,P-S,P-P-1,P-P-2,CPP}.
\item[(ii)] We define the trivial quandle $(X;*_0)$ by $a*_0b=a$. Then indeed we have
$d^{(*_0)}_i((x_1,...,x_n)=(x_1*_0x_i,...,x_{i-1}*_0x_i,x_{i+1},...,x_n)=(x_1,...,x_{i-1},x_{i+1},...,x_n)$
\item[(iii)] Notice, that $d^{(*)}_1= d^{(*_0)}_1$ so $d_1=d^{(*_0)}_1-d^{(*)}_1=0$, and this is the reason 
why we could start summation in Definition \ref{Definition 1.2} from $i=2$ (but ideologically it 
may be better to start summation from $i=1$).
\item[(iv)] Let $\gamma_n = d_0^{(*)}: C_n \to C_{n-1}$, that is $\gamma(x_1,x_2,...,x_n)= (x_2,...,x_n)$,
then $(-1)^n\gamma_n$ is a chain map in $(C_n,\partial_n)$, by Lemma \ref{Lemma 1.8} (e.g. \cite{CJKS,N-P}.
\end{enumerate}
\end{remark}

\subsection{Codimension 2 embedding, lower decker set}
We introduce here, following \cite{CKS-3,Kam}, the language needed to define quandle colorings and (co)cycle 
invariants in codimension 2.\ 
Let $M=M^n$ be a closed smooth $n$-dimensional manifold and
 $f: M \to {\mathbb R}^{n+2}$ its smooth embedding which is called a smooth knotting (or just knotting). Define
$\pi: {\mathbb R}^{n+2}\to {\mathbb R}^{n+1}$ by $\pi(x_1,....,x_{n+1},x_{n+2})= (x_1,....,x_{n+1})$ to be a
projection on the first $n+1$ coordinates.
 The projection of the knotting is the set $M^*= \pi f (M)$.
 Crossing set (or singularity set) $D^*$ of the knotting, is the closure in $M^*$ of the set
of all points $x^*\in M^*$ such that $(\pi f )^{-1}(x^*)$ contains at least two points 
(that is $D^*= \mbox{closure}(\{y\in {\mathbb R}^{n+1}\ | \ | \ \pi^{-1}(y)\cap M| > 1\})$). \
We define the double point set $D= (\pi f )^{-1}(D^*)$ 
(or sometimes as $(\pi)^{-1}(D^*)$ if we need it to be a subspace of ${\mathbb R}^{n+2}$).
Let $f: M \to {\mathbb R}^{n+2}$ be a knotting which is in general position  with respect to the projection
$\pi: {\mathbb R}^{n+2} \to {\mathbb R}^{n+1}$. The precise definition is in Section \ref{Section 6} (Definition 6.1),
here we only use the basic notions:\\
 $D^*$ is $(n-1)$-dimensional stratified complex.
Its $(n-1)$-dimensional strata consists of transverse double points (double point set strata $\Delta^1$).
The crossing set $D^*$ divides $\pi f(M)$ into pieces. Each piece (connected component of
$\pi f(M)-D^*$) is an open $n$-manifold embedded in $\R^{n+1}$ consisting of regular points of $\pi f(M)$,
which is called open regular sheet. Regular sheets are 2-sided (even if we allow $M$ to be
nonorientable \cite{Kam}).

The lower decker set $D_-$ is the closure of the subset of pure double points which are
lower in the projection (that is with respect to the last coordinate of ${\mathbb R}^{n+2}$).
Similarly, the upper decker set $D_+$ is the closure of the subset of pure double points which are
higher in the projection. $M$ is cut by $D_-$ into the set of $n$-dimensional regions ($n$-regions)
 denoted by ${\mathcal R}$, that is
${\mathcal R}$ is the set of connected components of $M- D_-$ (notice that $\pi f$ restricted to $M- D_-$ 
is an embedding and that the image of an $n$-region can contain several open regular sheets).
 
The diagram $D_M$ of a knotting  with a general position projection is the knotting projection $M^*$
together with ``over under" information for the crossing set. 
In other words, it is $M^*$ with $D_+$ and $D_-$ given.

\subsection{Quandle colorings, and quandle shadow coloring}

We define here, after \cite{CKS-3}, the notion of quandle coloring and quandle shadow coloring 
of diagrams of knottings (\cite{CKS-3} gives only definition in dimension $n\leq 2$ but generalization 
is natural and it is implicit in the work of Fenn, Rourke and Sanderson \cite{F-R,FRS-2}).
In \cite{P-R} the core coloring was considered for any $n$.
 We assume in the paper (unless otherwise stated)
that the considered $n$-dimensional manifold $M$ is oriented thus the normal orientation (co-orientation) of every 
open sheet of $M^*$ is well defined\footnote{In the case of $M$ unorientable, we can work with involutive quandle (kei) 
$X$ and develop the theory of colorings and (co)-cycle invariants.}. 

\begin{definition}(Magma coloring)\label{Definition 1.11}
Fix a magma $(X;*)$.  
Let $f: M \to \R^{n+2}$ be an $n$-knotting, $\pi:\R^{n+2}\to \R^{n+1}$ a regular projection, and $D_M$ the
knotting diagram.
Let ${\mathcal R}$ be the set of $n$-regions of $M$ cut by lower decker set.
We define a  magma coloring of a diagram $D_M$ (or a pair $(M,\pi)$) as a function 
$\phi: {\mathcal R} \to X$ satisfying the following condition:
if $R_1$ and $R_2$ are two regions separated by
$n$-dimensional upper decker region $R_3$ and the orientation normal to $R_3$
 points from $R_1$ to $R_2$, then $\phi(R_1)* \phi(R_3)=  \phi(R_2)$; compare Figure 1.3.
Coloring of $n$-regions leads also to coloring of open sheets of the diagram $D_M$ of the knotting.
Through the paper we  often  refer to this as coloring of a knotting diagram.
We denote by $Col_X(D_M)$ the set of colorings of $D_M$ by $X$, and by $col_X(D_M)$ its 
cardinality. 
Note that the definition is not using any properties of $*$; only when we will demand 
invariance of $col_X(D_M)$ under various moves on $D_M$, we will need some specific properties of $*$. 
\end{definition}
\ \\
\centerline{\psfig{figure=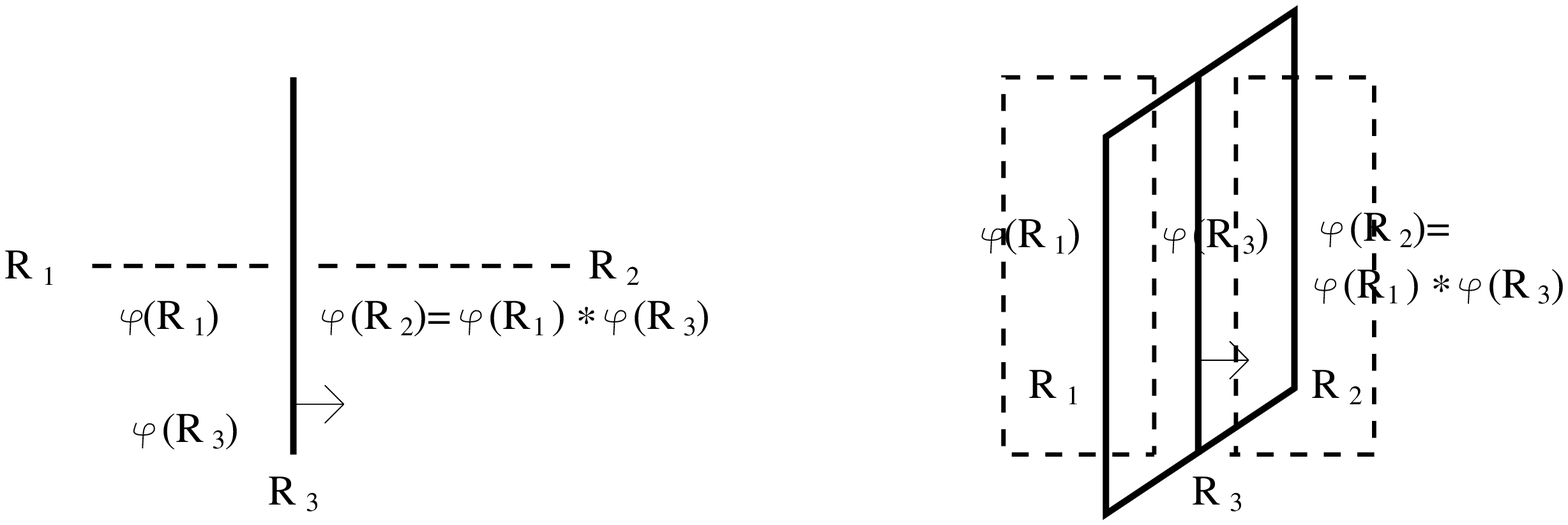,height=4.5cm}} \ \\
\centerline{Figure 1.3; rules for magma (e.g. quandle) coloring for $n=1,2$}
\ \\

If we assume that $(X;*)$ is a shelf then the set $Col_X(D_M)$ is a right $X$-shelf-space with 
an action of $X$ on $Col_X(D_M)$ given by 
$(\phi * x)(R)= \phi(R)*x$ for any region $R\in {\mathcal R}$. By the right self-distributivity of $*$ we have 
$(\phi *x)(R_1)* (\phi *x)(R_3)= (\phi (R_1)*x)*(\phi (R_3)*x) 
\stackrel{distr}{=}
 (\phi (R_1)*\phi (R_3))*x=
\phi(R_2)*x = (\phi*x)(R_2)$.

Before we define shadow coloring it is useful to notice that rack coloring of sheets of $D_M$ allows 
unique coloring of any closed path in $\R^{n+1}$ in a general position to $D_M$, as long as a base point 
is colored (Lemma \ref{Lemma 1.12}). In a preparation for the lemma we need the following:\\
Fix a rack $(X,*)$ and an element $q_0\in X$. Let $t_0 < t_1< ...<t_k <t_{k+1}$ be points on the line $\R$.
Each point $t_i$ ($1\leq i \leq k$) is equipped with a $\pm 1$ framing, according to the convention: 
\parbox{1.7cm}{\psfig{figure=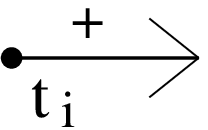,height=0.9cm}}, 
\parbox{1.7cm}{\psfig{figure=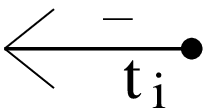,height=0.9cm}}.
Then any function $\phi: \{t_1,...,t_k \} \to X$ extends uniquely to the function $\tilde\phi:[t_0,t_k]\to X$ 
with $\tilde\phi(t_0)=q_0$ by the following rule. If $a\in [t_{i-1},t_i]$ and $b\in [t_i,t_{i+1}]$ then
$$ \tilde\phi(b)= \left \{ \begin{array}{ll}
                    \tilde\phi(a)*\phi(t_i) & \mbox{if the framing at $t_i$ is positive}\\
                    \tilde\phi(a)\bar *\phi(t_i) & \mbox{if the framing at $t_i$ is negative}
\end{array}
              \right.  $$ 
In particular, $\tilde\phi(t_{k+1})= (q_0*_1\phi(t_1))*_2...*_{k}\phi(t_k)$, where 
$*_i=*$ if the framing of $t_i$ is positive and $*_i=\bar *$ if the framing of $t_i$ is negative.
Finally we can apply the above to an arc $\alpha: [t_0,t_{k+1}] \to \R^{n+1}$ in a general position with 
respect to $D_M$ with some $X$-coloring $\phi$, and which cuts $D_M$ at $k$ points $\alpha(t_1),...,\alpha(t_k)$.
The framing of points $t_i$ ($1\leq i \leq k$) is yielded by co-orientation of $D_M$ and points 
$\alpha(t_i)$. We can identify $\phi(t)$ with $\phi(\alpha(t_i))$, thus by above $\phi$ can be extended 
to the function $\tilde\phi: [t_0,t_{k+1}] \to X$. Now we are ready to prove that:

\begin{lemma}\label{Lemma 1.12}
If $\alpha: [t_0,t_{k+1}]\to \R^{n+1}$ is a closed path, that is $\alpha(t_0)=\alpha(t_{k+1})$, then 
$\tilde\phi(\alpha(t_0))= \tilde\phi(\alpha(t_{k+1}))$. 

\end{lemma}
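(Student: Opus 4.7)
The plan is to prove that $\tilde\phi(t_{k+1})$ is invariant under generic homotopies of the closed path $\alpha$ in $\R^{n+1}$ rel basepoint, and then to invoke the simple connectedness of $\R^{n+1}$ to contract $\alpha$ to a constant loop. Concretely, I would connect $\alpha = \alpha_0$ to the constant loop $\alpha_1 \equiv \alpha(t_0)$ by a smooth one-parameter family $\alpha_s \colon [t_0,t_{k+1}] \to \R^{n+1}$ based at $\alpha(t_0)$, and arrange by transversality that each $\alpha_s$ is in general position with respect to the stratified set $M^*$ except at finitely many parameter values $s_1,\ldots,s_N$, where precisely one codimension-one transition occurs.

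Since $M^*$ is $n$-dimensional with $(n-1)$-dimensional double-point stratum $D^*$ and triple-or-higher strata of dimension at most $n-2$, a generic two-parameter family of arcs in $\R^{n+1}$ meets only the top (regular) stratum and the codimension-one stratum $D^*$. Thus each non-generic time $s_i$ falls into one of two types. \textbf{(Type I)} $\alpha_{s_i}$ becomes tangent to a regular sheet of $M^*$, so a pair of consecutive transverse intersections with opposite co-orientations is created or cancelled; the effect on $\tilde\phi$ is to insert (or remove) two consecutive operations $*x$ and $\bar *x$, which compose to the identity by invertibility of $*$ in the rack. \textbf{(Type II)} $\alpha_{s_i}$ passes transversally through a point of $D^*$; near such a point the over-sheet $P$ and the two pieces $Q_1,Q_2$ of the under-sheet satisfy $\phi(Q_2) = \phi(Q_1)*\phi(P)$, and the two orderings of $\alpha$'s crossings yield expressions $(a*\phi(Q_1))*\phi(P)$ and $(a*\phi(P))*(\phi(Q_1)*\phi(P))$, which coincide by right self-distributivity. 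A finite case analysis over the co-orientations of the two meeting sheets, and the direction in which $\alpha$ crosses each, reduces every Type II configuration to self-distributivity, or to an equivalent identity obtained by applying $\bar *$.

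For the constant loop $\alpha_1$ there are no crossings with $D_M$, so $\tilde\phi(t_{k+1}) = q_0 = \tilde\phi(t_0)$ trivially; together with invariance along the homotopy this proves the lemma. The main technical obstacle is the bookkeeping in (Type II): one must enumerate the combinations of co-orientation signs for $P$ and for the under-sheet $Q_1 \cup Q_2$, together with the direction in which $\alpha$ crosses each, and verify that in every case the resulting word identity in $X$ follows from right self-distributivity (possibly after applying $\bar *$ on one or both sides). The remaining ingredients --- transversality of generic homotopies, the simple connectedness of $\R^{n+1}$, and (Type I) via rack invertibility --- are standard.
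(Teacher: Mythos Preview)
Your proposal is correct and follows essentially the same strategy as the paper: contract the loop to the basepoint using simple connectedness of $\R^{n+1}$, put the contracting homotopy in general position with respect to $D_M$, and check that the two kinds of elementary events---cancellation of a back-and-forth pair (your Type~I, handled by invertibility) and passage through the double-point stratum (your Type~II, handled by right self-distributivity)---leave $\tilde\phi(t_{k+1})$ unchanged. The paper's proof is somewhat terser and illustrates the Type~II case via a closed-loop computation around a crossing (Figures~1.4 and~1.5) rather than the direct order-swap identity $(a*\phi(Q_1))*\phi(P)=(a*\phi(P))*(\phi(Q_1)*\phi(P))$ you wrote down, but the content is the same.
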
 
\begin{proof} 
Using the fact that $\R^{n+1}$ is simple connected, we can contract $\alpha$ to a base point $\alpha(t_0)$,
and we can put contracting homotopy in a general position with respect to $D_M$. The proof is by induction on 
the number of critical points of  contracting homotopy. 
The critical points  are either cancelling 
a piece of the path going for and back, or crossing a double point strata. 
In the first case if we start from the color $a$ 
and cross color $b$, forth and back, thus we get a color $(a*b)\bar *b$ or $(a\bar *b)*b$ which is $a$ by invertibility 
of $*$. In the case when isotopy is crossing a double point set, we use the fact that double point crossing 
looks like classical crossing multiplied by $\R^{n-1}$, and the interesting case is when the closed path is 
below sheets it crosses. The situation can be illustrated by using classical crossing and 
coherence of coloring follows from right distributivity and invertibility of $*$, see Figures 1.4 and 1.5 .
\end{proof}
\ \\
\centerline{\psfig{figure=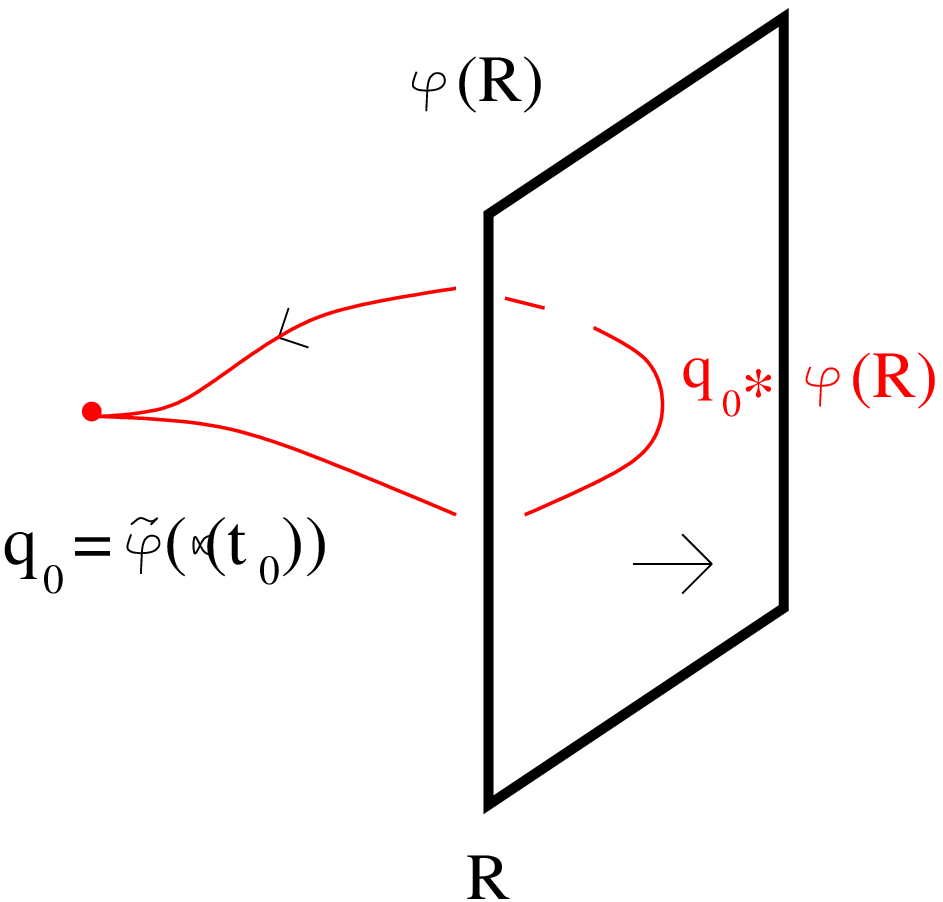,height=4.5cm}} \ \\
\centerline{Figure 1.4; a path moving for and back through the $n$-sheet }
\centerline{$(q_0*\phi(R))\bar *\phi(R)=q_0$}

\ \\
\centerline{\psfig{figure=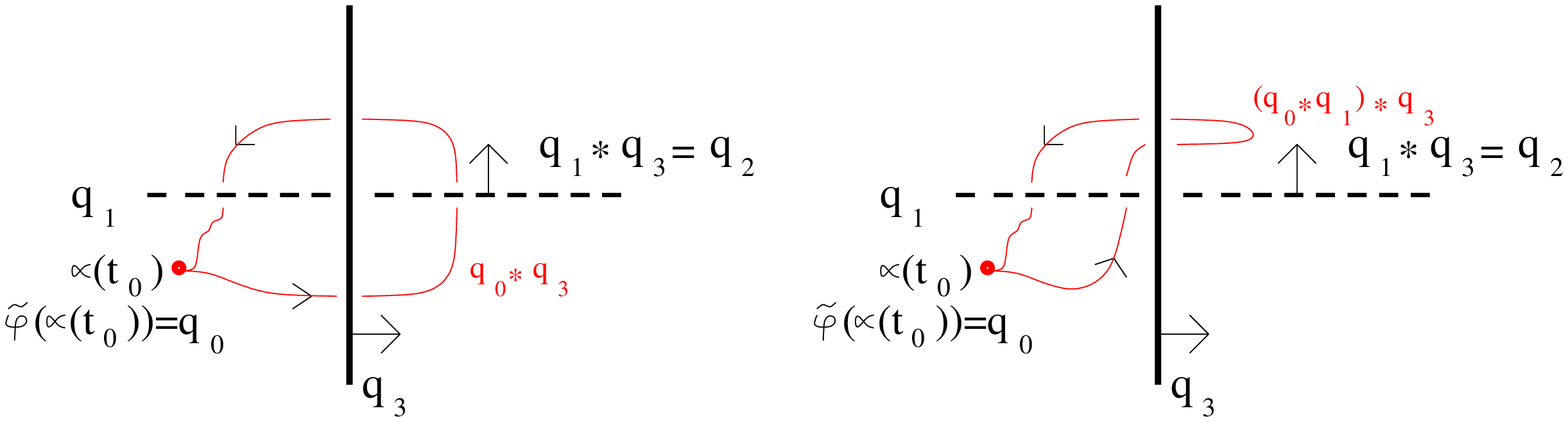,height=4.5cm}} \ \\
\centerline{Figure 1.5; a path is crossing double point stratum; $((q_0*q_3)*q_2)\bar *q_3)*\bar q_1=$ }
\centerline{$ ((q_0*q_3)*(q_1*q_3))\bar *q_3)*\bar q_1=
((q_0*q_1)*q_3)\bar *q_3)*\bar q_1= (q_0*q_1)*\bar q_1=q_0$}

\begin{definition}(Magma shadow coloring)\label{Definition 1.13}
A shadow coloring of a knotting diagram (extending the given coloring $\phi$) is a function
$\tilde \phi: \tilde {\mathcal R}\cup {\mathcal R} \to X$,
where $\tilde{\mathcal R}$ is the set of $(n+1)$-dimensional regions(chambers\footnote{We should
appreciate here, not that accidental, analogy to Weyl chambers in representation theory of 
Lie algebras.  Thus 
we use the term {\it chamber} throughout the paper.}) of
 ${\mathbb R}^{n+1}- \pi f(M)$
satisfying the following condition. If
$R_1$ and $R_2$ are $n+1$ regions (chambers) separated by $n$ dimensional region (regular sheet)
 $\alpha$ where the orientation normal
of $\alpha$  points from $R_1$ to $R_2$, then $\tilde \phi(R_1)* \tilde \phi(\alpha)= \tilde \phi(R_2)$
and $\tilde \phi$ restricted to the set of $n$-dimensional regions is a given coloring $\phi$ 
(compare \cite{CKS-3} and Figure 1.6 for $n=1 \mbox{ or } 2$). 
Again the definition works for any binary operation but if $(X;*)$ is a rack, 
then any coloring $\phi$ and  a constant $q_0$ chosen for a fixed $(n+1)$-chamber, $R_0$, 
yield the unique extension to shadow coloring $\tilde \phi$ 
so that  $\tilde \phi(R_0)=q_0$; this follows from Lemma \ref{Lemma 1.12}.\footnote{We can also  see 
this important property, as follows:
consider a small trivial circle $T_c$  in a chosen chamber $R_b$ of
 $\R^{n+1}- \pi f(M)$ let $T$ be the boundary of  a regular neighborhood of $T_c$,
thus $T= S^1 \times S^{n-1}$. We can extend coloring $\phi$ to $M\cup T$ by coloring $T$ 
by a fixed color $q_0$. Then we
isotope the circle $T_c$ (and $T$) so that it is always below $f(M)$ part but it touches
every chamber of $\R^{n+1}- \pi f(M)$.
Now having initial chamber colored by $q_0$ any other chamber is now colored by an appropriate color of
part of $T$ in the chamber. Unlike in approach using Lemma \ref{Lemma 1.12}, 
we use here the fact that Roseman moves on a  diagram 
are preserving colorings by a rack (see Theorem \ref{Theorem 3.4}).}.
We denote by $Col_{sh,X}(D_M)$ the set of all shadow colorings of $(\R^{n+1},D_M)$ by $X$ and by 
$col_{sh,X}(D_M)$ its cardinality.
\end{definition}

\ \\
\centerline{\psfig{figure=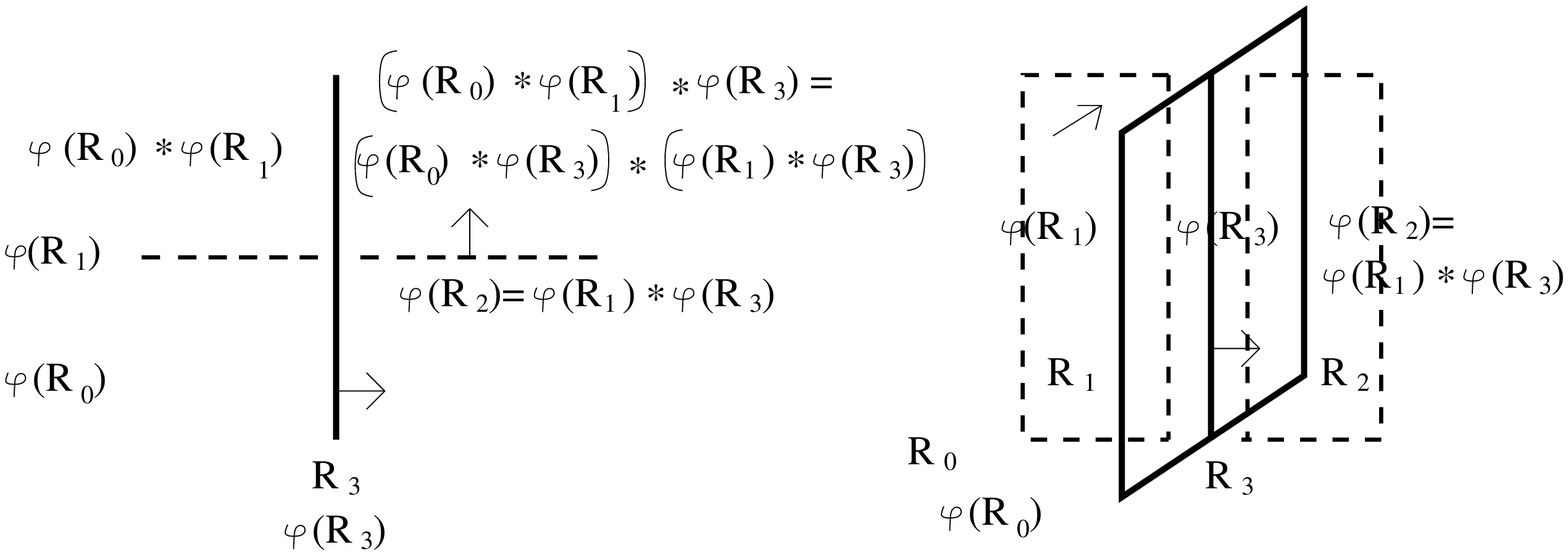,height=4.5cm}} \ \\
\centerline{Figure 1.6; Quandle shadow coloring for $n=1,2$}

Similarly as in non-shadow case if we assume that $(X;*)$ is a shelf then the set $Col_{sh,X}(D_M)$ 
is a right $X$-shelf-space with an action of $X$ on $Col_{sh,X}(D_M)$ given by
$(\tilde\phi * x)(R)= \tilde\phi(R)*x$ for any region or chamber $R$.

\begin{remark}\label{Remark 1.14}
A shadow coloring of $n$-knotting diagram can be interpreted as a special case of coloring in
dimension $n+1$. To this aim we consider the $(n+1)$ dimensional manifold $\tilde M= (M^n\times \R) \sqcup \R^{n+1}$ 
embedded in $\R^{n+3}$ as follows: Let $\tilde f: \tilde M \to \R^{n+3}$ with $\tilde f(m,x)=(f(m),x)$ and 
$\tilde f(x_1,...,x_{n+1})= (x_1,...,x_{n+1},h,0)$, where for $f(m)=(f_1(m),...,f_{n+2}(m))$ we assume 
$h \leq f_{n+2}(m)$ for any $m\in M^n$ (in other words, $\R^{n+1}$ is embedded below $M^n$).
The projection $\tilde\pi: \R^{n+3} \to \R^{n+2}$ is defined as $\pi \times Id$. We get the diagram 
$D_{\tilde M}= \tilde\pi \tilde f(\tilde M ) = (D_M \times \R) \cup (\R^{n+1}\times \{0\})$ in $\R^{n+2}$.
The points of multiplicity $n+1$ in $D_M$ gives rise to points of multiplicity $n+2$ in $D_{\tilde M}$, 
and each shadow coloring of $\R^{n+1},D_M)$ gives rise to a coloring of $D_{\tilde M}$
(also the (shadow) chain $c_{n+2}(D_M)$ gives rise to the chain $c_{n+2}(D_{\tilde M}$ as 
will be clear in the next subsection). 
 As we do not use Remark \ref{Remark 1.14} later, we should not worry that the resulting manifold $\tilde M$ 
is not compact (otherwise we need to consider knotting up to isotopy with compact support).
\end{remark}

\begin{observation}\label{Observation 1.15}
As noted by S.Kamada, one can consider shadow coloring $\tilde\phi$ also in the case of $(X;*)$ 
a rack and $E$ an $X$-rack-set.
In this case we color the chambers of $\R^{n+1} - D_M$ by elements of $E$ with a natural convention that 
if $R_1$ and $R_2$ are chambers separated by $n$ dimensional region $\alpha$ where the orientation normal
of $\alpha$  points from $R_1$ to $R_2$, then $\tilde \phi(R_1)* \tilde \phi(\alpha)= \tilde \phi(R_2)$.
One can also show, using Lemma \ref{Lemma 1.12} or Footnote 8, that
 if we take $\phi'(R_0)=q_0$ for some $q_0\in E$ then $\tilde\phi$ is uniquely 
extendable from coloring of $D_M$. The reason is that in place of $X$ and $E$ we can consider a rack $X\cup E$ 
as in Observation \ref{Observation 1.6}.
\end{observation}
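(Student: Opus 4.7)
The plan is to reduce the claim directly to Lemma \ref{Lemma 1.12} by packaging $X$ and $E$ into a single rack $X\sqcup E$ via Observation \ref{Observation 1.6}, and then transferring back the conclusion. The argument has essentially three steps: (1) verifying that the mixed sheet/chamber coloring is an honest shadow coloring for $X\sqcup E$; (2) invoking Lemma \ref{Lemma 1.12} for $X\sqcup E$ to get path-independence, hence uniqueness of the extension from the datum $\tilde\phi(R_0)=q_0$; (3) checking that the extension remains inside $E$, so that the output is a legitimate $E$-coloring of chambers.

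First, by Observation \ref{Observation 1.6}, the disjoint union $X\sqcup E$ carries a rack operation extending $*$ on $X$ and the action $*\colon E\times X\to E$, with the convention $a*y=a$ whenever $y\in E$. The invertibility axiom for the new rack is immediate: on $X$ it is the original one, while on $E$ it is guaranteed by the definition of an $X$-rack-set (the map $*_b\colon E\to E$ is a bijection for $b\in X$), and the maps $*_y$ with $y\in E$ are the identity. Self-distributivity in the mixed case is exactly the $X$-rack-set condition $(e*x_1)*x_2=(e*x_2)*(x_1*x_2)$ together with the trivial identities forced by $a*y=a$ for $y\in E$.

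Next, a coloring $\phi\colon \mathcal{R}\to X$ of the $n$-regions of $D_M$ is a fortiori a coloring of the same regions by $X\sqcup E$, and the propagation rule $\tilde\phi(R_2)=\tilde\phi(R_1)*\tilde\phi(\alpha)$ across an oriented sheet of color $\tilde\phi(\alpha)\in X$ is precisely the shadow-coloring rule for the rack $X\sqcup E$. Hence Lemma \ref{Lemma 1.12}, applied to the rack $X\sqcup E$, says that if we fix any base chamber $R_0$ and any initial color $q_0\in X\sqcup E$, then the value obtained at a chamber $R$ by propagating along a generic path from $R_0$ to $R$ is independent of the path. This is precisely the uniqueness of the extension.

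Finally, we must verify that when $q_0\in E$ the extended coloring takes values only in $E$. This is immediate: each propagation step replaces an element $e\in E$ by $e*x$ with $x\in X$ the sheet color, and $e*x\in E$ by the action of $X$ on $E$; consequently $E$ is preserved along any path, so every chamber receives a color in $E$. The main (small) obstacle is only the bookkeeping that the operation defined in Observation \ref{Observation 1.6} really does satisfy the rack axioms in the mixed case; once that is in hand, Lemma \ref{Lemma 1.12} (or equivalently the Footnote 8 argument using a small trivial circle $T_c$ visiting every chamber) does all the remaining work with no additional input.
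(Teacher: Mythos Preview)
Your proposal is correct and follows exactly the approach the paper indicates: the observation in the paper does not carry a separate proof but simply points to Observation~\ref{Observation 1.6} and Lemma~\ref{Lemma 1.12} (or Footnote~8), and you have spelled out precisely how those ingredients assemble. Your added verification that the propagated chamber colors remain in $E$ is a useful detail the paper leaves implicit.
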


\subsection{$(n+1)$ and $(n+2)$-chains of diagrams of knotting}


We show in this part how to any knotting diagram $D_M$ given by 
$\pi f: M^n \stackrel{f}{\rightarrow} \R^{n+2} \stackrel{\pi}{\rightarrow}
 \R^{n+1}$ and a shelf $(X;*)$ associate 
two chains of dimension $(n+1)$, and $(n+2)$ in rack (and quandle) chain groups
 $C^W_{n+1}(X)$ and $C^W_{n+2}(X)$ respectively ($W=R \mbox{ or } D \mbox{ or } Q$, that is 
Rack, Degenerate or Quandle).

We start from the classical theory in dimensions $n=1$.

Carter-Kamada-Saito noticed in 1998 that if we color a classical oriented link diagram, $D$, by elements of a given
quandle $X$ and consider a sum over  all crossings of $D$ of pairs in $X^2$, $\pm (q_1,q_2)$ according
to the  convention of Figure 1.7 then the sum has an interesting behavior under Reidemeister moves.
\ \\

\centerline{\psfig{figure=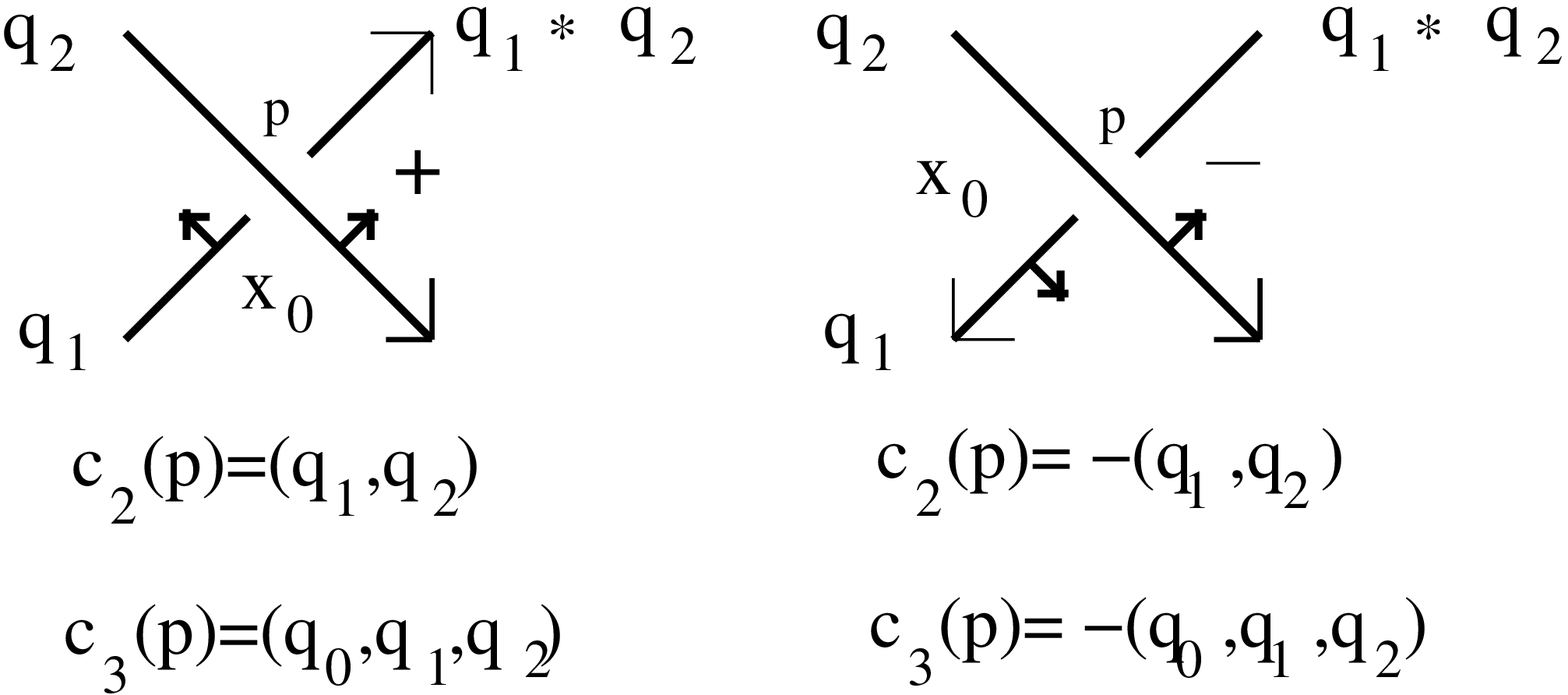,height=4.7cm}}
\centerline{Figure 1.7; The contribution to the 2-chain is $(q_1,q_2)$ for a positive crossing and}
 \centerline{ $-(q_1,q_2)$ for a negative crossing; in the case of the 3-chain for a shadow coloring, } 
\centerline{ we have $(q_0,q_1,q_2)$ and $-(q_0,q_1,q_2)$, respectively}
\ \\ \ \\
This lead them to define in 1998 a
2-cocycle invariant and relate it to rack homology defined between 1990 and 1995 by Fenn, Rourke, and Sanderson.
Because the first Reidemeister move is changing the sum by $\pm (x,x)$ they were assuming that
$(x,x)$ should be equivalent to zero (so $(x,x)$ should represent degenerate element). 
The 3-cocycle of the shadow $X$-colorings was motivated by \cite{R-S} and developed in \cite{CKS-1} 
(compare \cite{CKS-3}, page 154).
In particular, they noted that the 3-chain constructed with convention of Figure 1.7, that is $c_3(D)=
\sum_{p\in \mbox{crossings}} sgn(p)c_3(p)$, is a 3-cycle and Reidemeister moves preserve homology class 
of $c_3(D)$.

We define, in this subsection, the chains $c_{n+1}(D_M,\phi)$ and $c_{n+2}(D_M,\tilde\phi)$ 
for any diagram $D_M$ of a knotting $M$ and chosen colorings $\phi$ and $\tilde\phi$
 (the fact that they are cycles is proven in Section \ref{Section 2}, Theorem \ref{Theorem 2.1}, and topological
 invariance of their homology via Roseman moves is proven in Section 4, Theorem \ref{Theorem 4.1}). 
To make the general definition we need some conventions and notation concerning 
a crossing of multiplicity $(n+1)$ in a diagram of an $n$-knotting.\\
The sign of a crossing of multiplicity $(n+1)$  is chosen  so that it agrees with the definition 
of the sign of a crossing in a classical knot theory.
We say that the sign of $p$ is positive if $n+1$ normal vectors to $n+1$ hyperplanes 
intersecting at $p$ listed starting from the top (that is the normal vector to the highest hyperplane is first) 
form a positive orientation of ${\mathbb R}^{n+1}$; otherwise the sign of $p$ is equal to $-1$.
The source chamber $R_0$ of ${\mathbb R}^{n+1}-\pi f(M)$ adjacent to $p$ is the region 
from which normals of hyperplanes points (compare page 151 of \cite{CKS-3}).

\begin{definition}($(n+1$)-chain)\label{Definition 1.16}\
Let $f: M \to \R^{n+2}$ be an $n$-knotting, $\pi:\R^{n+2}\to \R^{n+1}$ a regular projection, and $D_M$ the 
knotting diagram. 
\begin{enumerate}
\item[(i)]
Fix a quandle $X$ and a coloring $\phi: {\mathcal R}\to X$,
Let $p$ be a crossing of multiplicity $n+1$ of $D_M$. We define a chain $c_{n+1}(p,\phi)\in C_{n+1}(X)$ by
$$ c_{n+1}(p) = sgn(p)(q_1,...,q_{n+1}) \mbox{ where } (q_1,...,q_{n+1}) \mbox{ is obtained as follows:} $$
We consider the source region, say $R_0$, around $p$ and $(q_1,...,q_{n+1})$ are colors of hyperplanes 
intersecting at $p$ around $R_0$ listed in the order of hyperplanes from the lowest to the highest 
(see Figure 1.8 for the case of $n=2$). We usually write $c_{n+1}(D_M)$ for $c_{n+1}(p,\phi)$ if $\phi$ 
is fixed.
\item[(ii)] The chain associated to the diagram and fixed coloring is the sum of above chains taken over 
all crossings of multiplicity $n+1$ of $D_M$:
$$c_{n+1}(D_M,\phi)=\sum_{p\in Crossings}c_{n+1}(p).$$
\item[(iii)] Finally, if $X$ is finite, we sum over all $X$ colorings of $D_M$ so the result is in the 
group ring over $C_{n+1}(X)$ (in fact, it is in the group ring of $H_{n+1}(X)$ but this will be 
proven later). It is convenient here to use multiplicative notation for chains so that 
$c_{n+1}(D_M,\phi)=\Pi_p(q_1,...,q_{n+1})^{sgn p}$ and then 
$$c_{n+1}(D_M)= \sum_{\phi}c_{n+1}(D_M,\phi)=
\sum_{\phi}\prod_p(q_1,...,q_{n+1})^{sgn (p)}.$$
\item[(iv)] If $X$ is possibly infinite, in place of a sum we consider the set with multiplicity
$$c^{set}_{n+1}(D_M)=\{c_{n+1}(D_M,\phi) \}_{\phi\in Col_X(D_M)}.$$
\end{enumerate}
\end{definition}
\begin{definition}($n+2$ (shadow) chain for $D_M$)\label{Definition 1.17}
Here we generalize the previous definition to construct $(n+2)$-chains from shadow colorings related to link diagram.
We color not only regions of $M$ (as in  Definitions \ref{Definition 1.11}, \ref{Definition 1.16})
but also $(n+1)$-chambers of ${\mathbb R}^{n+1}$ cut by $\pi f(M)$.
As before we take the product of signed chains associated to every multiplicity $(n+1)$-crossing point, 
$p$ and sum these products over all shadow colorings. Thus we start from $ c_{n+2}(p) = sgn(p)(q_0,q_1,...,q_{n+1})$,
where $q_0$ is the color of the source chamber $R_0$. In effect, if $X$ is finite then:
$$c_{n+2}(D_M)= \sum_{\tilde\phi}c_{n+2}(D_M,\tilde\phi)= \sum_{\tilde\phi}\prod_p(q_0,q_1,...,q_{n+1})^{sgn (p)}.$$
If $X$ is possibly infinite, in place of a sum we consider the set with multiplicity
$$c^{set}_{n+2}(D_M)=\{c_{n+2}(D_M,\tilde\phi) \}_{\tilde\phi\in Col_{sh,X}(D_M)}.$$

 We illustrate the case of $n=1$ in Figure 1.7, and the case of $n=2$ in Figure 1.8.
\end{definition}

\ \\
\centerline{\psfig{figure=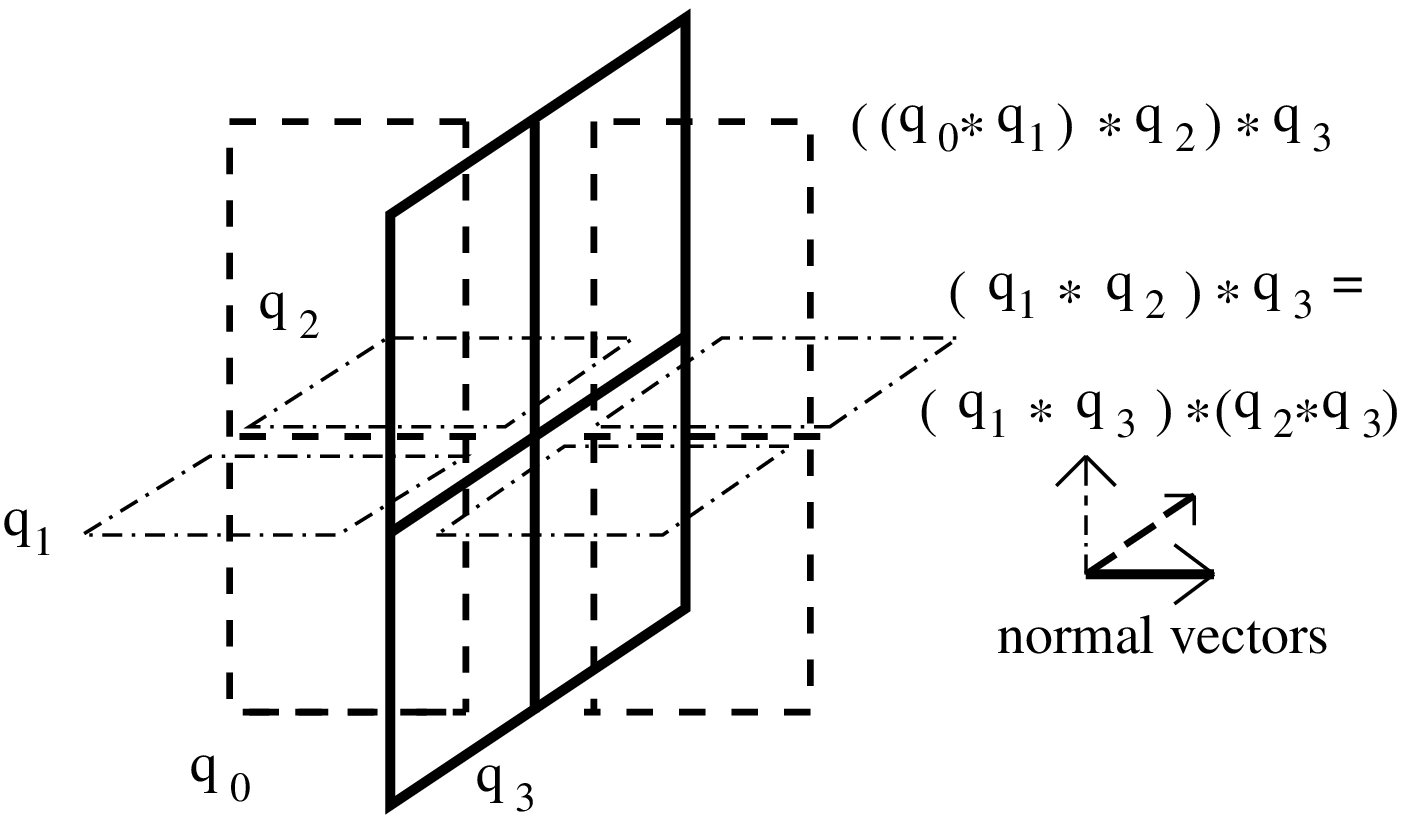,height=6.7cm}}
\centerline{Figure 1.8; multiplicity three point in $M^3$ knotting and quandle coloring;}
\centerline{the point yields the 3-chain $(q_1,q_2,q_3)$ and the 4-chain $(q_0,q_1,q_2,q_3)$;}
\centerline{ normal vectors yield here positive orientation}
\ \\

For $M$ which is not connected we can use a trick of \cite{CENS} to have more delicate chains.
They behave nicely under Roseman moves but they are not cycles. Thus we can use them to produce cocycle 
invariants but not cycle invariants of knottings (Theorem \ref{Theorem 4.8}(iii)). 

\begin{definition}\label{Definition 1.18}
Let $M= M_1 \cup M_2 \cup... \cup M_k$. We define an $(n+1)$-chain  $c_{n+1}(D_M,\phi,i)$  by considering
only those crossings of multiplicity $(n+1)$ whose bottom sheet belongs to $M_i$. We denote
the set of such $(n+1)$-crossings by ${\mathcal T}_i$.  Then we define
$$c_{n+1}(D_M,\phi,i) = \sum_{p\in {\mathcal T}_i}c_{n+1}(p,\phi).$$
\end{definition}


\section{$(n+1)$ and $(n+2)$ cycles for $n$-knotting diagrams}\label{Section 2}

We show in this section the two chains $c_{n+1}(D_M,\phi)$ and $c_{n+2}(D_M,\tilde\phi)$
 constructed in Subsection 1.5 are, in fact, cycles.
\begin{theorem}\label{Theorem 2.1}
The chains $c_{n+1}(D_M,\phi)$ and $c_{n+2}(D_M,\tilde\phi)$ are cycles in $C^Q_{n+1}(X)$ and $C^Q_{n+2}(X)$ 
respectively.
\end{theorem}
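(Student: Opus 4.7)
\emph{Proof plan.} My strategy is to show $\partial c = 0$ for each chain by pairing boundary contributions according to $1$-dimensional strata of the singular set $D^*$ and verifying that paired contributions cancel. For $c_{n+1}(D_M,\phi) = \sum_p sgn(p)(q_1^p,\ldots,q_{n+1}^p)$, Proposition~\ref{Proposition 1.9} and Definition~\ref{Definition 1.2} give
\[
\partial c_{n+1}(D_M,\phi) \;=\; \sum_p sgn(p) \sum_{i=2}^{n+1} (-1)^i\bigl[d_i^{(*_0)}(\bar q^p) - d_i^{(*)}(\bar q^p)\bigr],
\]
where $\bar q^p = (q_1^p,\ldots,q_{n+1}^p)$ and $p$ ranges over the $(n{+}1)$-multiple points of the diagram. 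Each pair $(p,i)$ has a geometric reading: at $p$, the $n{+}1$ sheets $S_1^p,\ldots,S_{n+1}^p$ (ordered lowest to highest) intersect transversely, and ``removing'' $S_i^p$ produces a multiplicity-$n$ point on a $1$-dimensional stratum $\sigma_{p,i}$ of $D^*$ whose closure contains $p$; the tuples $d_i^{(*_0)}(\bar q^p)$ and $d_i^{(*)}(\bar q^p)$ are the colorings of the $n$ surviving sheets of $\sigma_{p,i}$ on the source side and non-source side of $S_i^p$, respectively, with the quandle action in $d_i^{(*)}$ encoding exactly the effect of crossing $S_i^p$.

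Because $\pi\circ f$ is in general position, multiplicity-$n$ strata are $1$-manifolds, so each $\sigma_{p,i}$ is either a circle (contributing nothing to $\partial c_{n+1}$) or an arc whose two endpoints are $(n{+}1)$-multiple points. Hence the $(p,i)$-terms partition into pairs $\{(p,i),(p',i')\}$ indexing the two endpoints of a common arc $\sigma$. Cancellation at each such pair reduces to two local checks: \emph{(a) sign reversal}, $sgn(p)(-1)^i = -sgn(p')(-1)^{i'}$, which follows because $sgn(p)$ is determined by the oriented normal frame of the $n{+}1$ sheets at $p$, and this frame induces opposite orientations on $\sigma$ at its two endpoints; and \emph{(b) tuple matching}, namely that the source/non-source $n$-tuples at $p$ transport along $\sigma$ (via quandle actions coming from crossings with outside sheets) to the non-source/source tuples at $p'$, so the bracketed differences match up to the sign from~(a). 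Together these give $\partial c_{n+1}(D_M,\phi) = 0$ in $C^R_n(X)$ and hence in the quotient $C^Q_n(X)$.

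The shadow chain $c_{n+2}(D_M,\tilde\phi) = \sum_p sgn(p)(q_0^p,q_1^p,\ldots,q_{n+1}^p)$ is handled identically with $q_0^p$ treated as an additional first entry. The boundary sum now ranges over $2\leq i\leq n{+}2$, and the $i=1$ term (``removing $q_0$'') vanishes since $d_1 = 0$ by Remark~\ref{Remark 1.10}(iii). Under the same pairing of terms by multiplicity-$n$ $1$-strata, the source chamber may change as we traverse $\sigma$, but this change of $q_0$ is exactly compensated by the same quandle actions that transport the remaining entries, so the cancellation works verbatim. The main technical obstacle is the sign bookkeeping in step~(a): a careful local model of an $(n{+}1)$-multiple point as a transverse arrangement of $n{+}1$ hyperplanes in $\R^{n+1}$ is needed to verify that the two endpoints of each $1$-stratum acquire opposite orientation-induced signs, generalizing the classical $n=1$ computation of Carter--Kamada--Saito.
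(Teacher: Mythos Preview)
Your overall strategy --- pairing the face-map terms of $\partial c_{n+1}(D_M,\phi)$ along the $1$-dimensional multiplicity-$n$ strata of $D^*$ --- is exactly the paper's approach. There is, however, a genuine gap.

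You assert that every component of the multiplicity-$n$ locus is either a circle or an arc with both endpoints at $(n{+}1)$-multiple points. This is false for $n>1$: such an arc can also terminate at a branch-type singularity, where two of the $n$ sheets fold together (locally a Whitney-umbrella double line crossed with the remaining $n{-}1$ transverse sheets). At that endpoint there is no partner $(n{+}1)$-point, so your cancellation scheme leaves an unpaired term. The paper disposes of these leftover terms by observing that the two folding sheets are adjacent in height and carry the same color (the branching lower-decker piece does not separate regions of $M\setminus D_-$), so the surviving $n$-tuple has two equal consecutive entries and is degenerate. This is precisely why Theorem~\ref{Theorem 2.1} is stated in $C^Q$ and not in $C^R$; your claim that $\partial c_{n+1}(D_M,\phi)=0$ already in $C^R_n(X)$ is false in general once $n>1$.

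There is also a smaller imprecision in the pairing itself. At $p$, removing $S_i^p$ leaves a line that $S_i^p$ cuts into \emph{two} half-arcs, and $d_i^{(*_0)}(\bar q^p)$ and $d_i^{(*)}(\bar q^p)$ are the labels of these two \emph{different} half-arcs (source side versus non-source side of $S_i^p$), which in general run to different endpoints. Hence one cannot match the whole bracket $[d_i^{(*_0)}-d_i^{(*)}]$ at $(p,i)$ with a single bracket at some $(p',i')$; the correct matching, as in the paper, is term-by-term along individual half-arcs. Along such an open half-arc the label $q_{sh}(\mathrm{arc})$ is constant --- there are no ``crossings with outside sheets'' to transport through, since any such crossing would itself be an $(n{+}1)$-point terminating the arc.
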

The main idea of the proof is to analyze points of multiplicity $n+1$ and $n+2$ in $M^*\in \R^{n+1}$, and 
associated $(n+1)$- and $(n+2)$-chains in $\Z X^{n+1}$ and $\Z X^{n+2}$, and to identify face maps of the chains,
$d_i^{(*_0)}$ and $d_i^{(*)}$, as associated to arcs of multiplicity $n$. Then by Roseman theory (see Section 
\ref{Section 6}), such an arc starts at a point of multiplicity $n+1$ (say $p_1$) and ends either 
at another point of multiplicity $n+1$ (say $p_2$) or at a singular point (of multiplicity less than $n$).
In the first case, we prove that there are proper cancellations of face maps associated to the arc. In the 
second case (which may happen for $n>1$), the colorings of the arc give degenerate chains 
which can be ignored in $C^Q_{n+1}(X)$ and $C^Q_{n+2}(X)$.
Details are given in the following subsections, starting from the classical case of $n=1$.

\subsection{Shadow 3-cycle for $n=1$}\label{Subsection 2.1}

We start from the known case of $n=1$ but present our proof in a way which will allow natural 
generalization for any $n$. 
We show in detail in this subsection that the 3-chain constructed with convention of Figure 1.7, that is $c_3(D)= 
\sum_{p\in \mbox{crossings}} sgn(p)c_3(p)$, is a 3-cycle. 

We start from 2 crossings $p_1$ and $p_2$ connected by an 
arc colored by the pair $q_{sh}(arc)=(q_0,b)$ in our convention (that is the color of the arc is $b$ and 
the shadow color of the source region (chamber) close to the arc is $q_0$; see Definition \ref{Definition 2.2}). 
In our examples the horizontal line 
is first above and then below the other arcs, and it will be denoted as pair of type $(2,1)$ later 
in generalization. For a reader who would like to visualize here the general case, we stress that 
the arc connecting crossings $p_1$ and $p_2$ will be an arc of points of multiplicity $n$ (intersection 
of $n$ hyperplanes in $\R^{n+1}$) connecting points of multiplicity $n+1$.

Now consider our four cases.
\ \\ \ \\
\centerline{\psfig{figure=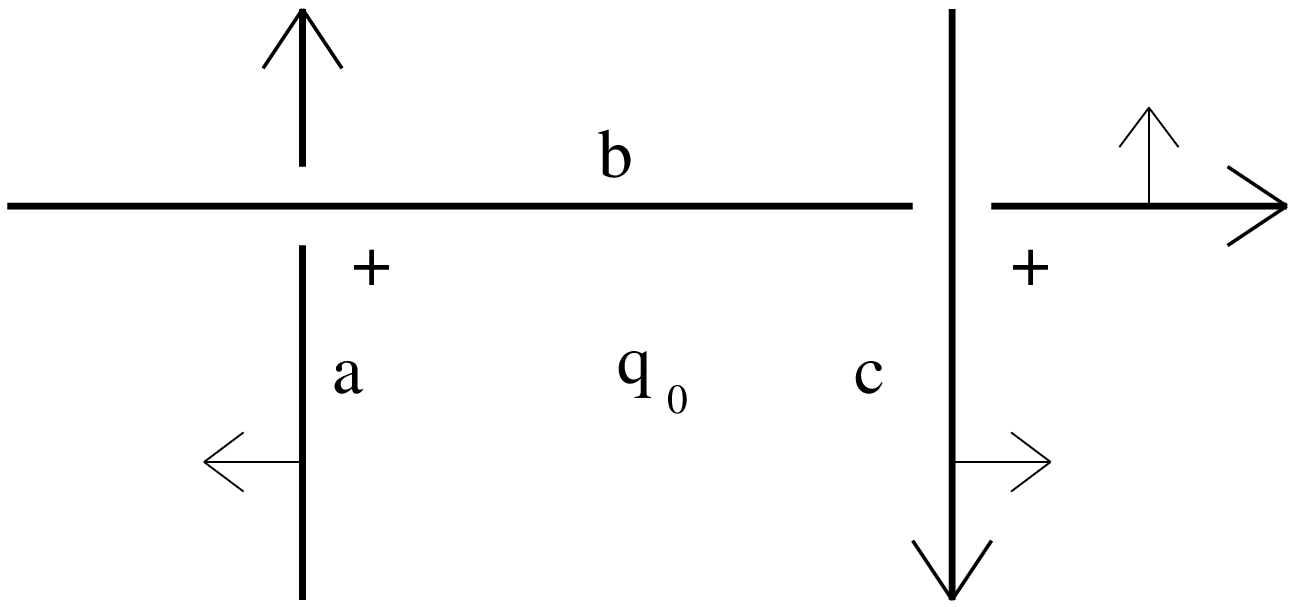,height=2.7cm}\ \ \ \ \ \ \
\psfig{figure=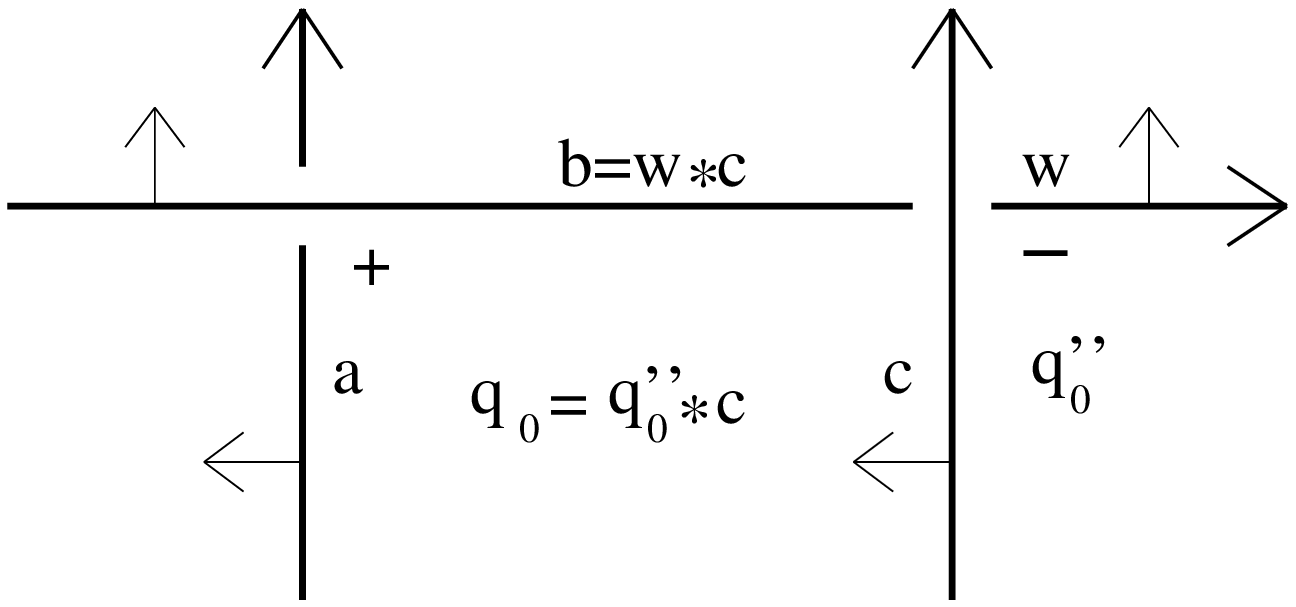,height=2.7cm}}
\ \\
\centerline{\psfig{figure=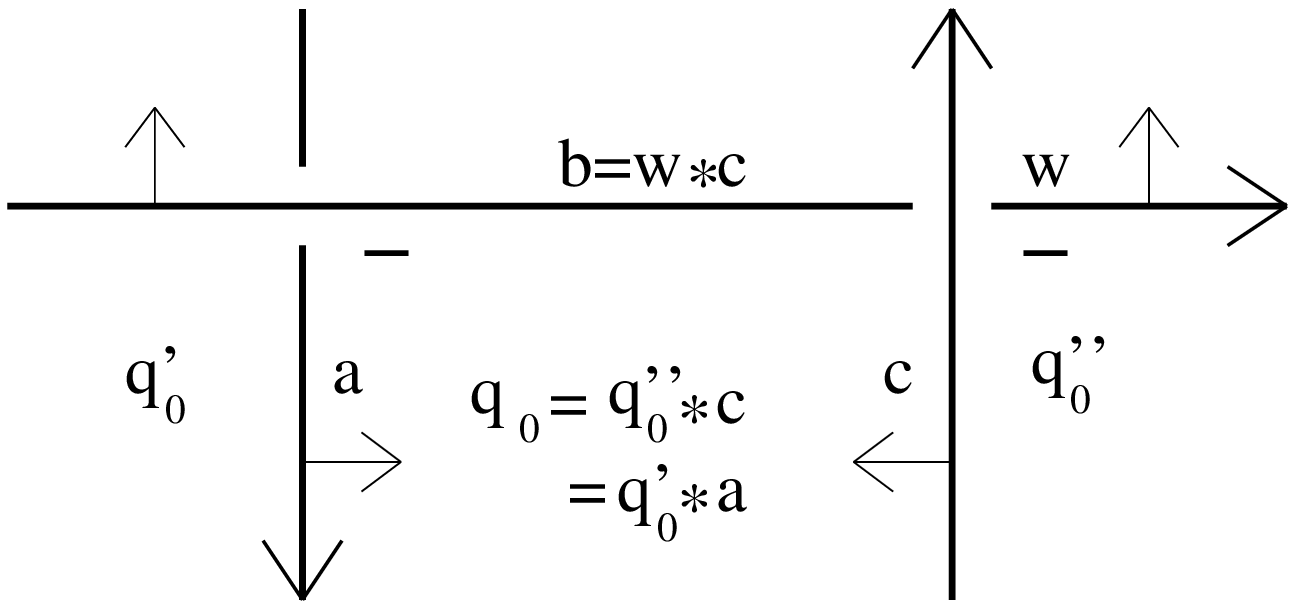,height=2.7cm}\ \ \ \ \ \ \
 \psfig{figure=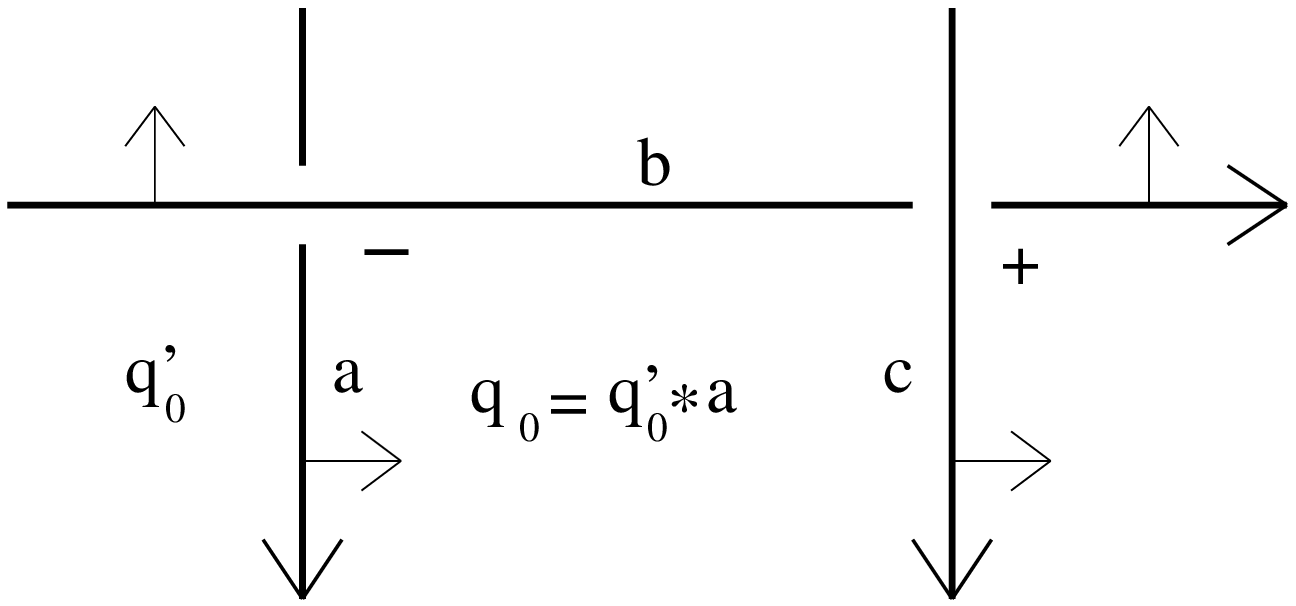,height=2.7cm}} \ \\
\centerline{Figure 2.1; In all case the connecting arc has label $q_{sh}(arc)$ equal to $(q_0,b)$;}
\centerline{ the multi-labelling $q_{sq}$ is explained in full generality in Definition \ref{Definition 2.2}}
\ \\

To describe precisely the outcome of our pictures we denote the shadow 3-chain of our diagram by $c_3(D)$,
the contribution of the first crossing $p_1$ by $c_3(p_1)$, the contribution of the second 
crossing, $p_2$, by $c_3(p_2)$, and $c_3(p_1,p_2)= c_3(p_1)+c_3(p_2)$ denotes the contribution 
of both crossings. We also use the notation of Figure 2.2 which is the special case of 
Definition \ref{Definition 2.2}. \ \\
\centerline{\psfig{figure=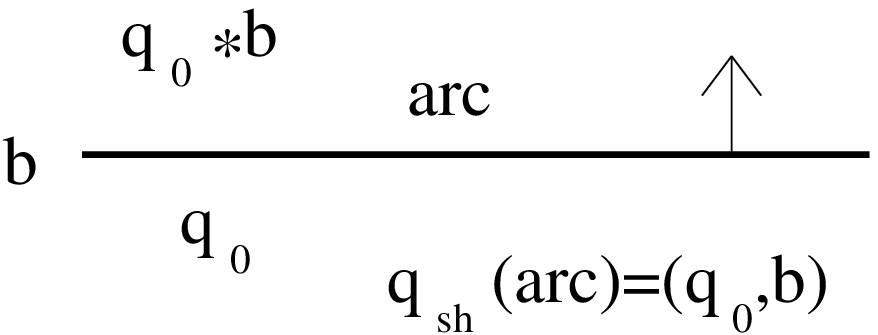,height=2.5cm}} \ \\
\centerline{Figure 2.2; Convention for arc coloring, $q_{sh}(arc)= (q_0,b)$}
\ \\
Thus in the first case we have:
$$c_3(p_1)= (q_0,a,b),\ c_3(p_2)=(q_0,b,c), \mbox{ and } c_3(p_1,p_2)= (q_0,a,b)+ (q_0,b,c).$$
We have then: 
$$d_2^{(*_0)}(c_3(p_1))= d_2^{(*_0)}((q_0,a,b)=d_3^{(*_0)}((q_0,b,c)=(q_0,b)=q_{sh}(arc).$$
Thus proper pieces of $\partial(p_1)$ and $\partial(p_2)$ cancel out in $c_3(p_1,p_2)$. 
This will be the case always, and below we shortly analyze other cases:\\
In the second case we have 
$$c_3(p_1)= (q_0,a,b),\ c_3(p_2)=-(q''_0,w,c), \mbox{ and } c_3(p_1,p_2)= (q_0,a,b)- (q''_0,w,c).$$
Where $b=w*c$. \ We have then:
$$d_2^{(*_0)}((q_0,a,b)=d_3^{(*)}((q''_0,w,c)=(q_0,b)=q_{sh}(arc).$$
In the third case we have:
$$c_3(p_1)= -(q'_0,a,b),\ c_3(p_2)=-(q''_0,w,c), \mbox{ and } c_3(p_1,p_2)= -(q'_0,a,b)- (q''_0,w,c).$$
We have then
$$d_2^{*}((q'_0,a,b)=d_3^{*}((q''_0,w,c)=(q_0,b)=q_{sh}(arc).$$
Finally, in In the fourth case we have:\\
$$c_3(p_1)= -(q'_0,a,b),\ c_3(p_2)=+(q_0,b,c), \mbox{ and } c_3(p_1,p_2)= -(q'_0,a,b) + (q_0,b,c).$$
We have then:
$$d_2^{(*)}((q'_0,a,b)=d_3^{(*_0)}((q_0,b,c)=(q_0,b)=q_{sh}(arc).$$
This proves that $c_3(D)$ is a cycle in $C_3(X)$, as $\partial=\partial^{(*_0)}- \partial^{(*)}=
\sum_{i=2}^3(-1)^id^{(*_0)}_i - \sum_{i=2}^3(-1)^id^{(*)}_i$. 
We didn't consider all cases as we will argue in the general case 
that all cases follows at once, however we illustrate one more case, of type $(2,2)$, that is 
the horizontal line is above both crossings:\
 We have:
$$c_3(p_1)= -(q'_0,a,b),\ c_3(p_2)=-(q_0,c,b), \mbox{ and } c_3(p_1,p_2)= -(q'_0,a,b) - (q_0,c,b).$$
We have then:
$$d_2^{(*)}((q'_0,a,b)=d_2^{(*_0)}((q_0,c,b)=(q_0,b)=q_{sh}(arc); \mbox{ as illustrated in Figure 2.3}.$$
\ \\
\centerline{\psfig{figure=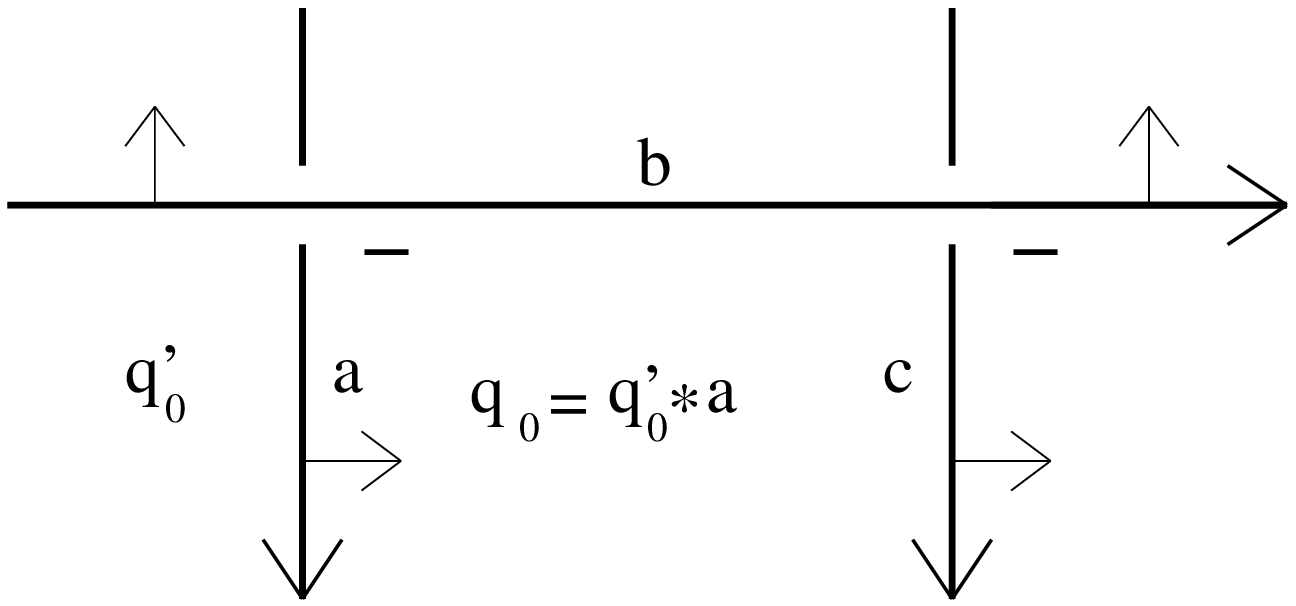,height=3.1cm}} \ \\
\centerline{Figure 2.3; two crossings of type $(2,2)$}
\ \\

We also can work with a tangle diagram $T$ in place of a link diagram $D$; then $c_3(T)$ is not an 
absolute cycle, but we can work in the setting of relative chain (of $(T,\partial T)$). We do not follow 
this idea here but it maybe useful in many situations.


\subsection{The general case of $(n+1)$ and $(n+2)$ cycles}
For the general case we need a notation for coloring of strata of a neighborhood of a crossing of 
multiplicity $(n+1)$ in $\R^{n+1}$ generalizing coloring and shadow coloring.

 For a given vector $w$ in ${\mathbb R}^{n+1}$ let $V_{w}$ be an $(n+1)$-dimensional
linear subspace orthogonal to $w$. For basic vectors $e_1=(1,0,...,0),
\ldots e_i=(0,....0,1,0,...,0), \ldots$
$ e_{n+1}=(0,...,0,1)$ we write
$V_i$ for $V_{e_i}$. We have $\bigcap_{i=1}^{n+1} V_i = (0,...,0)=p$ and it is
our ``model" singularity (crossing of multiplicity $n+1$). In a standard way we associate to this
crossing the signum $+1$. If our system of hypersurfaces $\bigcup V_i$ is a part of a knotting diagram,
the sign $+1$ would correspond to the situation $V_1 >V_2>...>V_{n+1}$, that is $V_i$ above $V_{i+1}$ 
at the crossing. However, our convention, motivated by right self-distributivity, makes more 
convenient assumption $V_1 <V_2<...<V_{n+1}$  and then with our convention the crossing $p$ has the signum 
$sgn(p)=(-1)^{n(n+1)/2}$.

We introduce two labelings (generalizing colorings and shadow colorings): 
\begin{enumerate}
\item [(i)] $q: \bigcup V_i \to X \cup X^2 \cup ... \cup X^{n+1}$;
\item [(ii)] $q_{sh}: {\mathbb R}^{n+1} \to X\cup X^2 \cup ... \cup X^{n+2}$
\end{enumerate}
The strata of the labeling of a point $x$ depends on the ``order of singularity"
that is $q(x)$ (respectively $q_{sh}(x)$) is an element of $X^k$,
(respectively $X^{k+1}$) where $X$ is a fixed shelf and $k=k(x)$ is
the number of hyperplanes $V_i$ to which $x$ belongs (if $x \in {\mathbb R}^{n+1}-\bigcup V_i$ then $k(x)=0$). 
Both colorings are coherent because
of right self-distributivity law in $X$. The idea of coloring is that we choose a color, say $q_0$
for a source region, $R_0$, of ${\mathbb R}^{n+1}-\bigcup_{i=1}^{n+1} V_i$ (this will be part of $q_{sh}$ coloring,
that is $q_{sh}(x)=q_0$ for $x\in R_0$). Furthermore, we choose colors $q_1,q_2,...,q_{n+1}$ and
 if $x\in R_i^{(s)}$ where $R_i^{(s)}$ is the source sheet of $V_i$,
then $q(x)=q_i$ and $q_{sh}(x)=(q_0,q_i)\in X^2$.

With an assumption that $V_i$ is always below $V_{i+1}$ in our considerations. 
  we propagate our colors according to our rules of coloring.
Notice that we use only $*$ (never $\bar *$) in our coloring, so assumption that $X$ is a shelf suffices
here.\\
We describe this idea formally below.
\begin{definition}\label{Definition 2.2}
Choose a shelf $X$ and $n+2$ elements $(q_0,q_1,...,q_n,q_{n+1})$ in $X$.
\begin{enumerate}
\item[(i)] For $x=(x_1,...,x_n,x_{n+1})\in {\mathbb R}^{n+1} - \bigcup V_i$ the
label $q_{sh}(x)$ is defined to be $q_0*q_{i_1}*...*q_{i_s}$
where $0< i_1 < ...<i_s$ are precisely these indexes for which
$x_{i_j} >1$. In particular, if for all $i$, $x_i < 0$ (i.e. $x$ is
in the source sector), then $q_{sh}(x) = q_0$.
\item[(ii)] If $x=(x_1,...,x_n,x_{n+1})$ has only one coordinate, say $i$th,
equal to zero (that is $x \in V_i$ but $x \notin V_j$ for $j\neq i$)
then $q(x)$ is defined to be $q_i*q_{i_1}*...*q_{i_s}$
where $i< i_1 < ...<i_s$ are precisely these indexes for which $i_j >i$ and
$x_{i_j} >1$. In particular, if for all $j$ such that $j>i$ we have $x_j < 0$,
 then $q(x) = q_i$.
\item[(iii)] Let $x=(x_1,...,x_n,x_{n+1})$ belongs to exactly $k$ hyperplanes,
$x \in \bigcap _{j=1}^k V_{i_j}$, then
$q(x)= (q^{(i_1)},...,q^{(i_k)}) \in X^k$, where $q^{i_j}(x)=q(x^{i_j})$
where $x^{i_j}$ is obtained from $x$ by replacing all coordinates
equal to $0$, but $x_{i_j}$, by $-1$ (recall that $x_{i_1}= x_{i_2}=...=
x_{i_k}=0$ and other coordinates are different from $0$).
\item[(iv)] If $x \in \bigcup V_i$ then $q_{sh}(x)$ is obtained from
$q(x)$ by $q_{sh}(x)= (q_{sh}(x'),q(x))$ where $x'$ is obtained from
$x=(x_1,...,x_n)$ by replacing all $0$ in the sequence by $-1$ (i.e. $x'$ is a point in 
a source chamber). In
particular, if $q(x)=q_i$ then $q_{sh}(x)= (q_0,q_i)$.
\end{enumerate}
\end{definition}
 
We can complete with this notation, the proof that $c_{n+1}(D_M)$ and $c_{n+2}(D_M)$ are cycles.


Very schematic visualization of the general case is shown in Figure 2.4 where 
the vertical lines represent $n$ dimensional
sheets of $M^*=f\pi(M^n)$ with source colors $a$ and $c$ respectively and horizontal line representing 
the line of intersection of $n$ sheets (in $R^{n+1}$) with the coloring $b=q(arc)=(q_1,...,q_n)$.
\ \\
\centerline{\psfig{figure=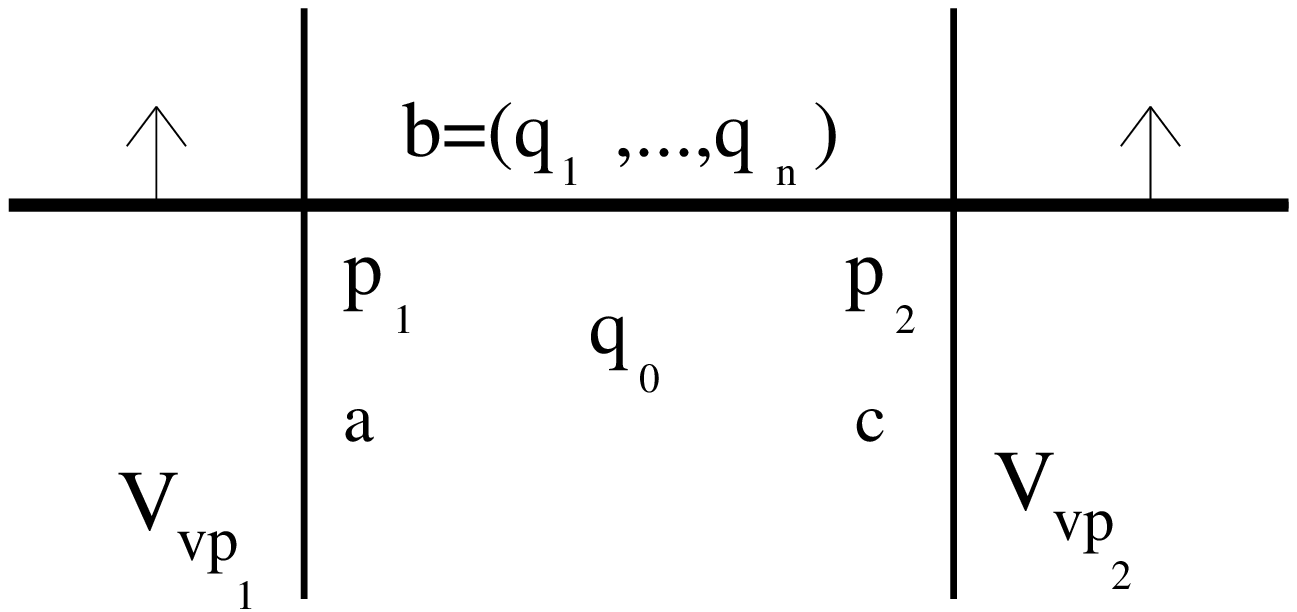,height=3.7cm}}
\centerline{Figure 2.4;  connecting arc has label $q_{sh}(arc)=(q_0,q(arc))=(q_0,b)=(q_0,q_1,...,q_n)$}
\ \\
We denote the position of the first vertical sheet as $i(p_1)$ and the second vertical sheet by $i(p_2)$.
The $n+2$ chains corresponding to $p_1$ and $p_2$ depend on direction of vertical vectors,
$n_{p_1}$ and $n_{p_2}$ to vertical sheets $V_{vp_1}$ and $V_{vp_2}$.
We write $\epsilon(p_1,p_2)= 1$ if the vector $n_{p_1}$ points from $p_1$ to $p_2$ and $0$ otherwise.
Similarly $\epsilon(p_2,p_1)= 1$ if the vector $n_{p_2}$  points from $p_2$ to $p_1$,
and it is $0$ otherwise. This notation is used to identify operation $*_0$ and $*=*_1$. Then we have:
$$d_{i(p_1)}^{(*_{\epsilon(p_1,p_2)})}(q_0,q_1,...,q_{i(p_1)-1},a,q_{i(p_1)},...,q_n) = $$
$$d_{i(p_1)}^{(*_{\epsilon(p_1,p_2)})}(q_0,q_1,...,q_{i(p_1)-1},a,q_{i(p_1)},...,q_n) = $$
$$d_{i(p_2)}^{(*_{\epsilon(p_2,p_1)})}(q_0,q_1,...,q_{i(p_2)-1},b,q_{i(p_2)},...,q_n)$$
Thus $sgn(p_1)(-1)^{i(p_1)}=-sgn(p_2)(-1)^{i(p_2)}$ and in $c_{n+2}(p_1,p_2)$ the terms
$d_{i(p_1)}^{(*_{\epsilon(p_1,p_2)})}$ and $d_{i(p_2)}^{(*_{\epsilon(p_2,p_1)})}$ cancel out.
In conclusion, the $(n+2)$-chain $c_{n+2}(D_M,\tilde\phi)$ is a cycle.

The similar proof works for $c_{n+1}(D_M,\phi)$. The fact that $c_{n+1}(D_M,\phi)$ is a cycle 
follows also from the following observation:
\begin{observation}\label{Observation 2.3}
Consider the map $\gamma_n: C_n(X) \to C_{n-1}(X)$ given by cutting the first coordinate, that is
$\gamma_n(x_1,x_2,...,x_n)= (x_2,...,x_n)$. Then by Lemma \ref{Lemma 1.8} (compare Remark \ref{Remark 1.10}(iv))
 the map $(-1)^n\gamma$ is 
a chain map for $\partial^{(*_0)}$ and $\partial^{(*)}$ thus also for $\partial= \partial^{(*_0)}-\partial^{(*)}$.
From definitions of $c_{n+1}$ and $c_{n+2}$,
 we have $c_{n+1}(D_M,\phi)= \gamma_{n+2}c_{n+2}(D_M,\tilde\phi)$. From this we conclude that if
$c_{n+2}(D_M,\tilde\phi)$ is an $(n+2)$-cycle then $c_{n+1}(D_M,\phi)$ is an $(n+1)$-cycle.
\end{observation}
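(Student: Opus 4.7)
The plan is to reduce the statement to two short chain-level identities and then combine them in one line. The first identity is combinatorial (a direct comparison of Definitions \ref{Definition 1.16} and \ref{Definition 1.17}); the second is the anticommutation $\gamma\partial=-\partial\gamma$, which is the substantive content.

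For the combinatorial identity, I would read off the two definitions at a single multiplicity-$(n+1)$ crossing $p$ of $D_M$. With source chamber color $q_0$ and sheet colors $q_1,\ldots,q_{n+1}$ listed by height from lowest to highest, the shadow contribution at $p$ is $\mathrm{sgn}(p)(q_0,q_1,\ldots,q_{n+1})$ and the non-shadow one is $\mathrm{sgn}(p)(q_1,\ldots,q_{n+1})$. Applying $\gamma_{n+2}$ (drop the first coordinate) to the former yields the latter crossing-by-crossing; summing and using linearity gives $c_{n+1}(D_M,\phi)=\gamma_{n+2}\, c_{n+2}(D_M,\tilde\phi)$ for any shadow extension $\tilde\phi$ of $\phi$.

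For the chain-map property, I would regard $\gamma$ as a virtual $0$-th face map $d_0^{(*_0)}=d_0^{(*)}$ appended to the presimplicial modules $(C_n(X),d_i^{(*_0)})$ and $(C_n(X),d_i^{(*)})$ of Proposition \ref{Proposition 1.9}. A direct check of the formulas shows that the presimplicial identity $d_0 d_i = d_{i-1}d_0$ holds against both families. Applying the proof of Lemma \ref{Lemma 1.8} then gives $\gamma\partial^{(*_0)}+\partial^{(*_0)}\gamma=\gamma\gamma$ and, by the same telescoping argument, $\gamma\partial^{(*)}+\partial^{(*)}\gamma=\gamma\gamma$ with the same sign; subtracting and using $\partial=\partial^{(*_0)}-\partial^{(*)}$ the ``obstructions'' $\gamma\gamma$ cancel, yielding $\gamma\partial+\partial\gamma=0$ on the nose. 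Equivalently, setting $f_m=(-1)^m\gamma_m$ one checks $f_{m-1}\partial_m=\partial_{m-1}f_m$, so $(-1)^m\gamma$ is a chain map in the standard sense. (A reader who prefers a bare-hands verification can confirm $\gamma\partial=-\partial\gamma$ by reindexing the rack boundary on $(x_2,\ldots,x_n)$; the only potential excess comes from $i=2$, where $d_2^{(*_0)}$ and $d_2^{(*)}$ agree after $\gamma$ is applied and hence cancel.)

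Assembling the pieces: assuming $c_{n+2}(D_M,\tilde\phi)$ is an $(n+2)$-cycle (Theorem \ref{Theorem 2.1} for the shadow chain), we get
$$\partial_{n+1}\, c_{n+1}(D_M,\phi)=\partial_{n+1}\gamma_{n+2}\, c_{n+2}(D_M,\tilde\phi)=-\gamma_{n+1}\partial_{n+2}\, c_{n+2}(D_M,\tilde\phi)=0,$$
so $c_{n+1}(D_M,\phi)$ is an $(n+1)$-cycle. The main subtlety I anticipate is the reconciliation between Lemma \ref{Lemma 1.8}'s literal hypothesis $d_0d_0=0$ and the fact that $\gamma\gamma\neq 0$ in general on $\Z X^n$; the resolution is exactly the cancellation of this obstruction between the two one-term boundaries, which is a feature special to the combined rack differential.
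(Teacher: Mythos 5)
Your proposal is correct and follows the same route as the paper: identify $c_{n+1}(D_M,\phi)$ with $\gamma_{n+2}\,c_{n+2}(D_M,\tilde\phi)$ crossing by crossing, then use the presimplicial identities via Lemma \ref{Lemma 1.8} to see that $(-1)^n\gamma$ commutes with the differential, so that $\partial c_{n+1}=\pm\gamma\,\partial c_{n+2}=0$. The one place where you go beyond the paper is worth keeping: since $\gamma\gamma\neq 0$, the ``in particular'' clause of Lemma \ref{Lemma 1.8} does not literally apply to $\partial^{(*_0)}$ or $\partial^{(*)}$ taken separately (a check on $(x_1,x_2,x_3)$ gives $\gamma\partial^{(*_0)}+\partial^{(*_0)}\gamma=-\gamma\gamma\neq 0$, and likewise for $\partial^{(*)}$), so the paper's intermediate assertion that $(-1)^n\gamma$ is a chain map for each one-term boundary is not right as stated. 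Your observation that the two defects are both equal to $\gamma\gamma$ with the same sign, and hence cancel in $\partial=\partial^{(*_0)}-\partial^{(*)}$, is exactly what makes the conclusion hold for the combined rack differential, and it is the cleanest repair of the citation; your bare-hands alternative (the only excess term in $\gamma\partial$ is the $i=2$ one, where $\gamma d_2^{(*_0)}=\gamma d_2^{(*)}$) is also verified correctly. One residual subtlety, present equally in the paper: Theorem \ref{Theorem 2.1} asserts that $c_{n+2}$ is a cycle only in $C^Q$, and $\gamma$ does not preserve the degenerate subcomplex in general (it can destroy a degeneracy in positions $1,2$); the argument survives because the degenerate chains actually produced by branch points have their repeated entries among the sheet coordinates, i.e.\ in positions $\geq 2$ of the shadow chain, which $\gamma$ sends to degenerate chains again.
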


In the next two sections we  show that our cycles (and their sums) are topological invariants.
We will start from the fact that the spaces of colorings (and shadow colorings) are topological invariants.

\section{Topological invariance of colorings}\label{Section 3}
 
The logic of the section is as follows: 
We introduce here, following \cite{Joy-2,F-R}, two definitions of a fundamental rack or quandle of
a knotting, the abstract one and the concrete definition. 
The abstract definition of the fundamental rack or quandle of 
a knotting is independent on any projection in the similar way, as the fundamental group.
In the concrete definition, for a given projection, we get the concrete presentation of 
a fundamental rack or quandle from the diagram using generators and relations in a way reminiscent of 
the Wirtinger presentation of the fundamental group of a classical link complement.
It was observed in \cite{F-R,FRS-2} that these definitions are equivalent using a general position argument.
As a consequence we get that a concrete rack and quandle colorings are topological invariants 
(independent on a diagram) because in both cases abstract and concrete colorings are obtained 
from a homomorphism from the fundamental object to $X$.

\subsection{The fundamental rack and  quandle of an $n$-knotting}\label{Subsection 3.1}

The first definition, we give, uses a knotting diagram.
We follow Definitions \ref{Definition 1.11} and \ref{Definition 1.13}, except that in place of 
concrete chosen $(X;*)$ we build a universal (called fundamental) object (magma, shelf, rack or quandle).

\begin{definition}(Fundamental Magma of a knotting diagram)\label{Definition 3.1}
Let $f: M  \to {\mathbb R}^{n+2}$ be a knotting,  $\pi: {\mathbb R}^{n+2} \to {\mathbb R}^{n+1}$ a regular 
projection and $D_M$ related knot diagram.
\begin{enumerate}
\item[(i)]
The fundamental magma $X(D_M)=(X;*)$  is given by the following finite presentation.
The generators of $X$ are in bijection with the set of regions of $M$
cut by lower decker set. Relations in $(X;*)$ are given as follows: if $R_1$ and $R_2$ are two regions separated by
$n$-dimensional upper decker region $R_3$ (with variables, respectively, $q_1$, $q_2$ and $q_3$) and 
the orientation normal to $R_3$
 points from $R_1$ to $R_2$, then $q_1*q_3=q_2$.\footnote{Notice that however our definition 
of $X(D_M)$
is not using any properties of $*$ but still for $n>1$ and a projection with a triple crossing point, 
colors involved in the crossing satisfy right self-distributivity (see Figure 1.8 where $(q_1*q_2)*q_3=
(q_1*q_3)*(q_2*q_3)$).}
\item[(ii)] If $(X;*)$ is required to be a shelf we get a fundamental shelf of $D_M$.
\item[(iii)] If $(X;*)$ is required to be a rack we get a fundamental rack of $D_M$. 
\item[(iv)] If $(X;*)$ is required to be a quandle we get a fundamental quandle of $D_M$.
\item[(v)] Fix a quandle $(X;*)$ and the knotting $f: M \to {\mathbb R}^{n+2}$, with regular projection 
$\pi: {\mathbb R}^{n+2} \to {\mathbb R}^{n+1}$. The quandle (resp. rack) coloring of a diagram $D_M$ 
is a quandle homomorphism from the fundamental quandle (resp. rack) $X(D_M)$ to $X$.
\end{enumerate}
\end{definition}
The quandle (or rack) coloring described in Definition \ref{Definition 3.1} is equivalent to 
Definition \ref{Definition 1.11}.

The Fundamental shadow magma, shelf, rack and quandle of a  knotting diagram are defined analogously 
to that of coloring; we give a full definition so it is easy to refer to it.
\begin{definition}(Fundamental shadow magma of a knotting diagram)\label{Definition 3.2}
Let $f: M  \to {\mathbb R}^{n+2}$ be a knotting,  $\pi: {\mathbb R}^{n+2} \to {\mathbb R}^{n+1}$ a regular
projection and $D_M$ related knot diagram.
\begin{enumerate}
\item[(i)]
The fundamental shadow magma $X_{sh}(D_M)=(X;*)$  is given by the following finite presentation.
The generators of $X$ are in bijection with the set $R \cup R_{cha}$ where $R$ is the set
 of regions of $M$ cut by lower decker set and $R_{cha}$ is the set of chambers 
of $\R^{n+1}- \pi f(M)$. 
Relations in $(X;*)$ are given as follows: if $R_1$ and $R_2$ are two regions separated by
$n$-dimensional upper decker region $R_3$ (with variables, respectively, $q_1$, $q_2$ and $q_3$) 
and the orientation normal to $R_3$
 points from $R_1$ to $R_2$, then $q_1*q_2=q_3$.
Furthermore, if   $\tilde R_1$ and $\tilde R_2$ are $n+1$ chambers 
separated by $n$ dimensional region $\alpha$ where the orientation normal
of $\alpha$  points from $\tilde R_1$ to $\tilde R_2$,  and $\tilde q_1,\tilde q_2,\tilde q_3$ are 
colors of $\tilde R_1, \tilde R_2$ and $\alpha$,
respectively, then $\tilde q_1*\tilde q_3=\tilde q_2$.\footnote{Notice that however our definition of 
$X_{sh}(D_M)$ 
is not using any properties of $*$ but still if there is a  double crossing in the projection then
colors involved in the crossing satisfy right self-distributivity (see Figure 1.6). }
\item[(ii)] If $(X;*)$ is required to be a shelf we get a fundamental shelf of $D_M$.
\item[(iii)] If $(X;*)$ is required to be a rack we get a fundamental rack of $D_M$.
\item[(iv)] If $(X;*)$ is required to be a quandle we get a fundamental quandle of $D_M$.
\item[(v)] Fix a quandle (or a rack) $(X;*)$ and the knotting $f: M \to {\mathbb R}^{n+2}$, with regular projection
$\pi: {\mathbb R}^{n+2} \to {\mathbb R}^{n+1}$. The quandle (resp. rack) shadow coloring of a diagram $D_M$
is a quandle homomorphism from the fundamental quandle (resp. rack) $X_{sh}(D_M)$ to $X$.
\end{enumerate}
\end{definition}
Again, the quandle (or rack) shadow coloring described in Definition \ref{Definition 3.2} is equivalent to
Definition \ref{Definition 1.13}.

\begin{remark}\label{Remark 3.3}
If we assume that $X(D_M)$ is the fundamental rack (or quandle) of a diagram $D_M$,
 then the presentation of the fundamental shadow rack $X_{sh}(D_M)$ can be 
obtained by adding one generator and no new relations (except that of rack (or quandle) relations). That is:
$X_{sh}(D_M)= \{X(D_M), w\ | \ \}$. The new generator $w$ is a color of an arbitrary, but fixed, chamber 
of $\R^{n+1}-D_M$. The presentation can be justified using Lemma \ref{Lemma 1.12} or by 
a method described in Footnote 8 to Definition \ref{Definition 1.13}. 
\end{remark} 

We recall in the next subsection a projection free approach to the fundamental rack and quandle of 
a knotting and use it to notice that $X(D_M)$ and $X_{sh}(D_M)$ are independent on the concrete diagram.

\subsection{Abstract definitions of a fundamental rack and quandle}
Joyce, Fen, and Rourke \cite{Joy-2,F-R} gave an abstract definition of the fundamental rack of a knotting, 
independent on 
a projection and they noted that it is equivalent to the concrete definition given in Subsection 3.1.

We follow here \cite{F-R} in full generality, however we are concerned mostly with the case of 
of the ambient manifold $W=\R^{n+2}$.
\begin{enumerate}
\item[(i)]
Let $L: M \to W$ be a knotting (codimension two embedding). We shall assume that the embedding is
 proper at the boundary if $\partial M \neq \emptyset$, that $W$ is connected and that $M$ is transversely 
oriented in $W$. In other words we assume that each normal disk to $M$ in $W$ has an orientation which 
is locally and globally coherent. The link is said to be {\it framed} if there is given cross section 
(called framing) $\lambda: M \to \partial N(M)$ of the normal disk bundle (the total space of the bundle is 
a tubular neighborhood of $L(M)$ in $W$). Denote by $M^+$ the image of $M$ under 
$\lambda$. We call $M^+$ the parallel manifold to $M$.
\item[(ii)] We consider homotopy classes $\Gamma$ of paths in $W_0=\mbox{closure}(W-N(M)$ from a point 
in $M^+$ to a base point. During the homotopy the final point of the path at the base point is kept fixed and the 
initial point is allowed to wander at will in $M^+$.
\item[(iii)] The set $\Gamma$ 
is a right $\pi_1(W_0)$-group-set, that is the fundamental group of a knotting complement acts on $\Gamma$
as follows:
let $\gamma$ be a loop in $W_0$ representing an element $g$ of the fundamental group. 
If $a\in \Gamma$ is represented by the path $\alpha$ then define $a\cdot g$ to be the class of the composition path 
$\alpha \circ \gamma$. We can use this action to define a rack structure on $\Gamma$. Let $p\in M^+$ be 
a point on the framing image. Then $p$ lies on a unique meridian circle of the normal circle bundle. 
Let $m_p$ be the loop based at $p$ which follows round the meridian in a positive direction. 
Let $a,b \in \Gamma$ be represented by the paths $\alpha,\beta$ respectively. Let $\partial(b)$ be 
the element of the fundamental group determined by the loop $\bar\beta\circ m_{\beta}\circ \beta$.
(here $\bar\beta$ represents the reverse path to $\beta$ and $m_{\beta}$ is an abbreviation for $m_{\beta(0)}$ 
the meridian at the initial point of $\beta$.) The {\it fundamental rack} of the framed link $L$ is defined to be
 the set $\Gamma=\Gamma(L)$ of homotopy classes of paths as above with operation
$$a*b= a\cdot \partial(b)=  [\alpha\circ \bar\beta \circ m_{\beta} \circ \beta].$$
\item[(iv)] A rack coloring, by a given rack $(X;*)$ is a rack homomorphism $f: \Gamma(L) \to X$.
In a case of $W=R^{n+2}$ we give also down to earth definition from the link projection (initially depending on 
the projection) (see Definition \ref{Definition 1.11}). 
\item[(v)] If $L$ is an unframed link then we can define its {\it fundamental quandle}:\ 
let $\Gamma_q=\Gamma_q(L)$ be the set of homotopy classes of paths from the boundary of the regular 
neighborhood ($N(M)$) to the base point where the initial point is allowed to wander during the course 
of the homotopy over the whole boundary. The rack structure on $\Gamma_q(L)$ is defined similarly to that 
of $\Gamma(L)$. Thus the fundamental quandle of $L$ is the quotient of the fundamental rack of $L$ by relations 
generated by idempotency $x*x=x$. 
\item[(vi)] A quandle coloring,  by a given quandle $(X;*)$, is a quandle homomorphism $f: \Gamma_q(L) \to X$.
\end{enumerate}

\ \\
\centerline{\psfig{figure=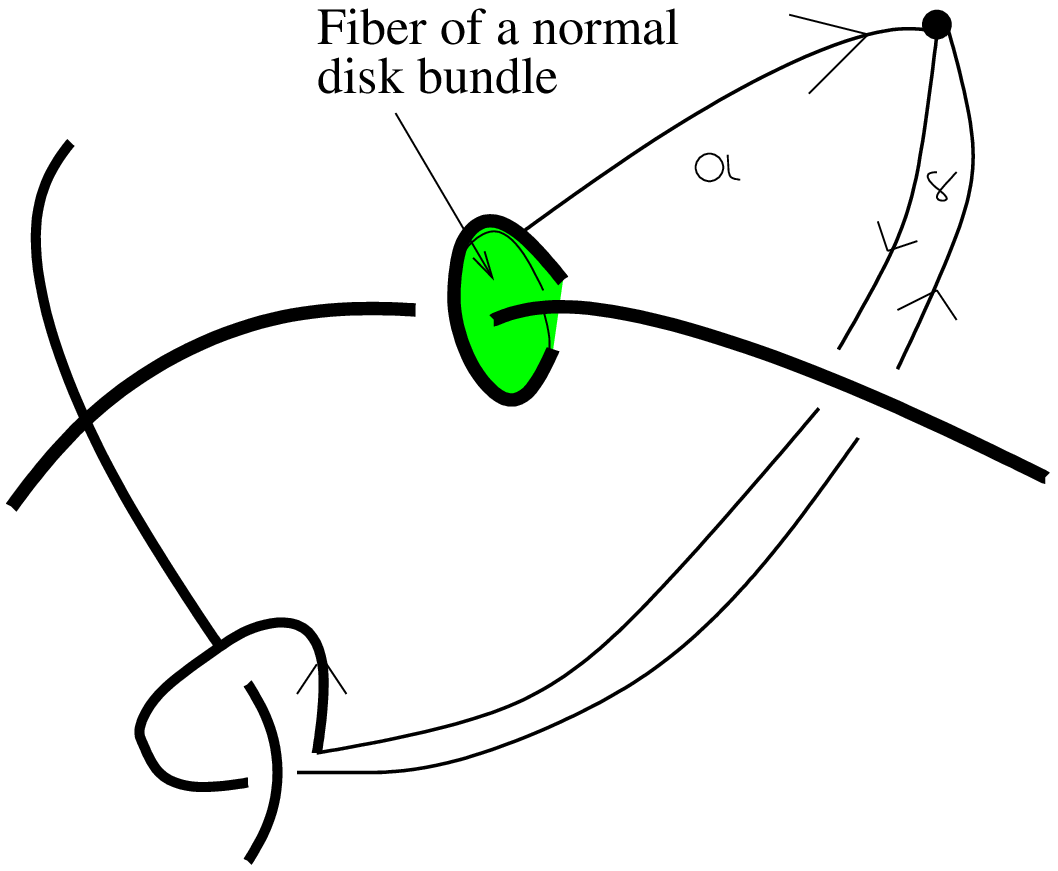,height=3.2cm}}
\centerline{Figure 3.1; Composition $a\cdot g$ where $a$ is a class of an arc 
$\alpha$ and $g$ is a class of a loop $\gamma$}\ \\

For $W=R^{n+2}$ (or $S^{n+2}$) our two definitions of the fundamental rack (or quandle) are equivalent. 
If $D_M$ is a diagram of a knotting $L:M \to \R^{n+1}$ with the regular projection $\pi: \R^{n+2}\to \R^{n+1}$ 
then we have a natural epimorphism $F_q: X_q(D_M) \to \Gamma_q(L)$ given as follow: 
Let $x_H$ be a generator of $X_q(D_M)$ corresponding to a sheet (region) $H$ of $D_M$. Choose a base point 
of $\R^{n+1}- \pi L(M)$ very high (call it $\infty)$ and project it to a point of $H$ cutting $\partial V_M$ 
at some point $m_H$, then we define $F_q(x_H)$ to be the class of a straight line from $m_H$ to $\infty$. 
Similarly we define a rack epimorphism $F: X(D_M) \to \Gamma(L)$, by extending $F_q(x_H)$ by starting from the 
point of $M^+$ being on the same fiber disk of $V_M$ as $m_H$ and connecting along the boundary of the disk to $m_H$.
\begin{theorem}\label{Theorem 3.4}(\cite{F-R,FRS-2})
\begin{enumerate}
\item[(i)]
Two definitions of a fundamental rack (resp. quandle) of a (framed) $n$-link $L:M \to \R^{n+2}$  
coincide, the map $F:X(D_M) \to \Gamma(L)$ is a quandle isomorphism.  
In particular, Definition \ref{Definition 3.1} (and equivalent \ref{Definition 1.11}) for racks 
and quandles are independent on regular projection\footnote{In quandle case we consider knottings up to 
(smooth) ambient isotopy (equivalently up to Roseman moves), 
and in a rack case up to framed (smooth) ambient isotopy.} and give a
finite presentation of the fundamental rack $\Gamma(L)$ (resp. quandle $\Gamma_q(L)$). 
\item[(ii)]
The fundamental shadow rack (resp. quandle) is independent on regular projection thus it is well 
defined for a knotting $n$-link $L:M \to R^{n+2}$; we denote it by $\Gamma^sh(L)$ (resp. $\Gamma_q^sh(L)$). 
\item[(iii)] The sets ($X$-quandle-sets) $Col_X(D_M)$ a $Col_{X,sh}(D_M)$ 
do not depend on the diagram of a given linking $M$.
\end{enumerate}
\end{theorem}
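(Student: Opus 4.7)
The plan is to establish (i) by constructing an inverse $G$ to the natural epimorphism $F$ defined in the text, then deduce (ii) from Remark \ref{Remark 3.3}, and finally (iii) from the observation that $Col_X(D_M)=\mathrm{Hom}_{\mathrm{rack}}(X(D_M),X)$ depends only on the isomorphism class of $X(D_M)$.

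First I would verify that $F$ respects the defining relations of $X(D_M)$. If $x_{R_1},x_{R_2},x_{R_3}$ are the generators attached to regions as in Definition \ref{Definition 3.1}, then the arc at $R_1$ composed with the loop $\bar\beta\circ m_\beta\circ\beta$ for a path $\beta$ landing on $M^+$ near $R_3$ is homotopic in $W_0$ to the arc assigned to $R_2$, because crossing the upper decker region $R_3$ in the co-oriented direction is exactly the geometric realization of $a\cdot\partial(b)$. Hence $F$ sends each relation $x_{R_1}*x_{R_3}=x_{R_2}$ to an identity in $\Gamma(L)$ and is a well-defined rack (resp.\ quandle) epimorphism.

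Next, to construct $G\colon\Gamma(L)\to X(D_M)$, represent $a\in\Gamma(L)$ by a path $\alpha$ from $p\in M^+$ to the base point at infinity. A small perturbation, justified by the general-position theory of Section \ref{Section 6}, allows us to assume that $\pi\circ\alpha$ avoids all singular strata of $D_M$ of codimension $\ge 2$ and meets the regular sheets transversely at finitely many framed points. Starting with the generator $x_H$ attached to the region containing $p$ and propagating along $\alpha$ by the rule of Lemma \ref{Lemma 1.12} (with the target rack $X$ replaced by $X(D_M)$ itself), we obtain a word in the generators of $X(D_M)$ and set $G(a)$ to be the element it represents. One must then check that $G$ is well defined on homotopy classes: a generic homotopy between two representatives decomposes into finitely many critical events of three types, namely (a) creation or cancellation of a pair of transverse intersections with a regular sheet (handled by invertibility of $*$, as in Figure 1.4), (b) passing an arc through a double point stratum of $D_M$ (handled by right self-distributivity, as in Figure 1.5), and (c) tangencies with higher-codimension strata, which by the Roseman analysis of Section \ref{Section 6} again reduce to the rack axioms. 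In each case the word is altered only through a consequence of the defining relations of $X(D_M)$, so $G$ descends to homotopy classes. In the unframed (quandle) case, idempotency $x*x=x$ handles the additional freedom to slide the initial endpoint around $\partial N(M)$ within the same region. One then verifies $F\circ G=\mathrm{Id}$ and $G\circ F=\mathrm{Id}$ on generators, proving (i). The main obstacle is precisely this step: matching every critical event of a generic homotopy of paths to an algebraic relation in $X(D_M)$, which is where invertibility, self-distributivity, and idempotency are each used.

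Part (ii) follows by combining (i) with Remark \ref{Remark 3.3}. Define the abstract shadow fundamental rack (resp.\ quandle) $\Gamma^{sh}(L)$ (resp.\ $\Gamma_q^{sh}(L)$) by adjoining to $\Gamma(L)$ one free generator $w$ representing an arbitrarily chosen chamber of $\R^{n+1}-\pi f(M)$ and declaring the action of meridians of $M$ on $w$ to be the expected one (equivalently, by using Lemma \ref{Lemma 1.12} to propagate the color of the base chamber to all chambers). Since, by Remark \ref{Remark 3.3}, $X_{sh}(D_M)=\langle X(D_M),w\rangle$ with no new defining relations, the isomorphism $F$ of (i) extends by $w\mapsto w$ to a rack (resp.\ quandle) isomorphism $X_{sh}(D_M)\to\Gamma^{sh}(L)$, establishing projection-independence of the shadow fundamental object. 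Finally, (iii) is immediate: by Definitions \ref{Definition 3.1}(v) and \ref{Definition 3.2}(v) we have canonical bijections
\[
Col_X(D_M)=\mathrm{Hom}(X(D_M),X),\qquad Col_{sh,X}(D_M)=\mathrm{Hom}(X_{sh}(D_M),X),
\]
in the category of racks (resp.\ quandles); since the source objects are topological invariants of $L$ by (i) and (ii), so are the right-hand $\mathrm{Hom}$-sets, and the right $X$-action described after Definitions \ref{Definition 1.11} and \ref{Definition 1.13} is transported along these bijections, giving (iii) as an equality of $X$-sets.
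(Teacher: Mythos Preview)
Your proposal is correct and follows essentially the same route as the paper, which defers the substance of (i) to the general-position argument of Fenn--Rourke (their Remarks~(2), p.~375) and Fenn--Rourke--Sanderson (Lemma~3.4), then derives (ii) from Remark~\ref{Remark 3.3} and (iii) from the $\mathrm{Hom}$-set description of colorings. Your sketch of the inverse $G$ and the case analysis of critical events in a generic homotopy of paths is exactly the content of the cited argument; the paper itself only gestures at it, so your write-up is in fact more explicit than what appears in the text.

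One small simplification: your case (c), tangencies with higher-codimension strata, is unnecessary. A generic $1$-parameter family of arcs in $\R^{n+1}$ is $2$-dimensional, so it misses all strata of $D_M$ of codimension $\geq 3$ and meets the double-point stratum only in isolated instants; this is the point of the F--R remark that ``in general position a homotopy between paths only crosses the $(n-1)$-strata.'' Thus only your events (a) and (b) actually occur, and the rack axioms of invertibility and right self-distributivity suffice, with idempotency entering only in the quandle case for sliding along $\partial N(M)$.
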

\begin{proof} The statement and a sketch of a proof is given in \cite{F-R} (Remarks(2) p. 375\footnote{Fenn and 
Rourke write in Remarks(2): {\it A similar analysis can be carried out for an embedding of $M^n$ in $S^{n+2}$: 
we obtain a ``diagram" by projecting onto ${\mathbb R}^{n+1}$ in general position and regarding 
top dimensional strata ($n$-dimensional sheets) as ``arcs" to be labelled by generators and 
$(n-1)$-dimensional strata (simple double manifolds) as ``crossings" to be labelled by relators. In general 
position a homotopy between paths only crosses the $(n-1)$-strata and a proof along the lines of the 
theorem can be given that this determines a finite presentation of the fundamental rack.}}) and  
\cite{FRS-2}(Lemma 3.4; p.718). One can give also a proof using Roseman moves (starting with the pass move $S(c,n+2,0)$
discussed in detail later in this paper).

We also can conclude that the set $X_{sh}(D_M)$ (in fact, $X$ quandle set) of shadow colorings is a topological invariant.
We use here  Remark \ref{Remark 3.3}, namely, as $X(D_M)$ is a topological invariant, so is
$X_{sh}(D_M)= \{X(D_M), w\ | \ \}$.

\end{proof}

\begin{remark}\label{Remark 3.5}
Theorem \ref{Theorem 3.4} should be understood as follows: If $R$ is a Roseman move on a diagram of $n$-knotting
$D_M$ resulting in $RD_M$, then there are natural $X$-rack isomorphisms (i.e. bijections preserving right action by $X$), 
$R_{\#}: Col_X(D_M) \to Col_X(RD_M)$ and $\tilde R_{\#}: Col_{X,sh}(D_M) \to Col_{X,sh}(RD_M)$. Natural means here 
that outside a ball (tangle) in which the move $R$ takes place, the bijections $R_{\#}$ and $\tilde R_{\#}$ are 
identity. \ This raises an interesting question: assume that after using a finite number of Roseman moves 
we come back to the diagram $D_M$.  What automorphism of $X$-quandle-sets $Col_X(D_M)$ and $Col_{X,sh}(D_M)$ we performed?
Is it always an inner automorphism\footnote{If we change a base point in the definition of the 
fundamental rack we make an inner automorphism on it (i.e. generated by $X$ action) reflecting similarity 
with fundamental group), \cite{Joy-2}.}. When it is the identity?
\end{remark}

\begin{corollary}\label{Corollary 3.6}
The homology, $H_*^W(X(D_M))$ and $H_*^W(X_{sh}(D_M))$ of the fundamental rack and the fundamental shadow rack 
are topological knotting invariants.
\ M.Eisermann proved that in the classical case a knot is nontrivial if and only if $H_2^Q(X(K))=\Z$, \cite{Eis-1}.
We can ask what we can say in a general case about $H_{n+1}^Q(X(D_M))$.
\end{corollary}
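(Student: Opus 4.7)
The proof is essentially a direct consequence of Theorem \ref{Theorem 3.4}, together with the functoriality of rack, quandle, and degenerate (co)homology. Let me outline the plan.

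First, I would recall that by Theorem \ref{Theorem 3.4}(i) the diagrammatic fundamental rack $X(D_M)$ (resp. quandle) and the abstract fundamental rack $\Gamma(L)$ (resp. quandle $\Gamma_q(L)$) are isomorphic as racks (resp. quandles), via the natural epimorphism $F$ described just before the statement of the theorem, and by part (ii) the analogous statement holds for the shadow versions $X_{sh}(D_M)$ and $\Gamma^{sh}(L)$. Since $\Gamma(L)$ and $\Gamma^{sh}(L)$ are defined entirely in terms of the knotting $L:M\to \R^{n+2}$ (i.e.\ homotopy classes of paths in the knotting complement, with operation induced by meridians), they are invariants of the knotting up to (framed) ambient isotopy, with no reference to a projection.

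Next I would invoke the fact that the assignment $X\mapsto C_*^W(X)$ for $W\in\{R,D,Q\}$ is functorial on the category of racks (resp.\ quandles): a rack homomorphism $\varphi:X\to Y$ sends an $n$-tuple $(x_1,\dots,x_n)$ to $(\varphi(x_1),\dots,\varphi(x_n))$, and this commutes with the boundary $\partial$ from Definition \ref{Definition 1.2} because $\varphi$ respects $*$. Hence an isomorphism of racks induces an isomorphism of the rack, degenerate, and quandle chain complexes, and therefore of $H^W_*$. Combining with the first step, for any two diagrams $D_M, D'_M$ of the same knotting $L$ we obtain a canonical chain of isomorphisms
\[ H_*^W(X(D_M)) \;\cong\; H_*^W(\Gamma(L)) \;\cong\; H_*^W(X(D'_M)), \]
and similarly for $X_{sh}$ using $\Gamma^{sh}(L)$.

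The only subtlety, which I would mention but not belabor, is in interpreting the statement ``topological invariant": concretely, if $R$ is a Roseman move transforming $D_M$ into $RD_M$, then the induced rack isomorphism $X(D_M)\to X(RD_M)$ (obtained by tracing the new generators/relations through $R$, or abstractly by composing the two identifications with $\Gamma(L)$) yields an isomorphism on $H_*^W$. This is precisely the diagrammatic content of Theorem \ref{Theorem 3.4} lifted to (co)homology, and there is no additional obstacle beyond functoriality. The final observation of the corollary regarding $H_2^Q$ in the classical case and the open question about $H_{n+1}^Q(X(D_M))$ is a remark rather than part of the claim and requires no proof here.
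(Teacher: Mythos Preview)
Your proposal is correct and matches the paper's approach: the paper states Corollary~\ref{Corollary 3.6} without a separate proof, treating it as an immediate consequence of Theorem~\ref{Theorem 3.4} (the fundamental rack/quandle and its shadow version are independent of the diagram), together with the implicit functoriality of $H_*^W$ under rack/quandle isomorphisms. Your explicit spelling-out of the functoriality step and the chain of isomorphisms through $\Gamma(L)$ is exactly the argument the reader is expected to supply.
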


The presentation of $X(D_M)$ gives the coloring of $D_M$ by the quandle  $X(D_M)$; we call this 
the fundamental coloring and denote by $\phi_{fund}$. Similarly, The presentation of $X_{sh}(D_M)$ gives the 
shadow coloring of $D_M$ by the quandle  $X_{sh}(D_M)$; we call this     
the fundamental shadow coloring  and denote by $\tilde \phi_{fund}(D_M)$.

\begin{corollary}\label{Corollary 3.7}
The homology classes of cycles $c_{n+1}(D_M,\phi_{fund})$ (resp. $c_{n+2}(D_M,\tilde \phi_{fund})$ 
are knotting invariants up to isomorphism of homology groups generated by an automorphism of 
a fundamental quandle (resp. fundamental shadow quandle). 
M.Eisermann proved that in the classical case, the homology class of the fundamental cycle\footnote{This 
class is called in \cite{Eis-1} the {\it orientation class} of $K$.} 
of a nontrivial knot is a generator of $H_2^Q(X(K))=\Z$, \cite{Eis-1}.
  \end{corollary}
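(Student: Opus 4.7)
The plan is to combine three ingredients already established in the paper: Theorem \ref{Theorem 3.4}, which says that the fundamental (shadow) quandle $X(D_M)$ (resp. $X_{sh}(D_M)$) is a topological invariant of the knotting; the forward-referenced Theorem \ref{Theorem 4.1}, asserting that for any \emph{fixed} quandle $X$ and any coloring $\phi$, the homology class $[c_{n+1}(D_M,\phi)]\in H^Q_{n+1}(X)$ (resp.\ the shadow class) is preserved under Roseman moves; and Remark \ref{Remark 3.5}, which records that the identifications produced by Theorem \ref{Theorem 3.4} are natural only up to inner automorphism of the fundamental object.

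Let $D_M$ and $D_{M'}$ be two diagrams representing the same knotting, joined by a finite sequence of Roseman moves. By Theorem \ref{Theorem 3.4} there is an isomorphism $\psi: X(D_M) \to X(D_{M'})$, well defined up to an inner automorphism of $X(D_{M'})$. Under the bijection $R_{\#}: Col_{X(D_M)}(D_M)\to Col_{X(D_M)}(D_{M'})$ of Remark \ref{Remark 3.5}, the fundamental coloring $\phi_{fund}(D_M)=\mathrm{id}_{X(D_M)}$ is carried to the coloring $\psi^{-1}\circ\phi_{fund}(D_{M'})$ of $D_{M'}$ by $X(D_M)$. Applying Theorem \ref{Theorem 4.1} to the fixed quandle $X=X(D_M)$ yields the homology equality
$$[c_{n+1}(D_M,\phi_{fund}(D_M))]\;=\;[c_{n+1}(D_{M'},\psi^{-1}\circ\phi_{fund}(D_{M'}))]\quad\text{in } H^Q_{n+1}(X(D_M)).$$

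The next step is to observe that the assignment $\phi\mapsto c_{n+1}(D_{M'},\phi)$ is natural in the coloring quandle: a quandle homomorphism $h:Y\to Z$ sends $c_{n+1}(D_{M'},\phi)$ to $c_{n+1}(D_{M'},h\circ\phi)$, because the chain is built from tuples of colors at each multiplicity-$(n+1)$ crossing. Applied to $h=\psi$, this gives $\psi_*\,c_{n+1}(D_{M'},\psi^{-1}\circ\phi_{fund}(D_{M'}))=c_{n+1}(D_{M'},\phi_{fund}(D_{M'}))$. Combined with the previous displayed identity, we obtain
$$\psi_*[c_{n+1}(D_M,\phi_{fund}(D_M))]\;=\;[c_{n+1}(D_{M'},\phi_{fund}(D_{M'}))]\quad\text{in } H^Q_{n+1}(X(D_{M'})).$$
Since $\psi$ is determined only up to inner automorphism of the fundamental quandle, the induced isomorphism $\psi_*$ on $H^Q_{n+1}$ is determined only up to the action of such automorphisms, which is precisely the ambiguity allowed in the statement. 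The shadow case is identical with $X_{sh}(D_M)$ in place of $X(D_M)$, using Theorem \ref{Theorem 3.4}(ii) and the shadow version of Theorem \ref{Theorem 4.1}.

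The main obstacle is conceptual rather than computational: one must carefully keep track of two distinct sources of non-canonicity. First, the cycles attached to different diagrams live in different chain complexes $C^Q_{n+1}(X(D_M))$ and $C^Q_{n+1}(X(D_{M'}))$; to compare them one must transport through $\psi_*$. Second, $\psi$ itself is unique only up to inner automorphism, so any statement asserting equality of transported classes must be qualified accordingly. Once both ambiguities are identified with the single indeterminacy of ``automorphism of the fundamental quandle,'' the corollary is an immediate consequence of Theorems \ref{Theorem 3.4} and \ref{Theorem 4.1}, together with the evident naturality of the chain $c_{n+1}$ (and $c_{n+2}$) under quandle homomorphisms.
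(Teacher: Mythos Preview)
The paper states Corollary~\ref{Corollary 3.7} without proof, so your argument is filling in what the authors leave implicit. Your approach---combining Theorem~\ref{Theorem 3.4}, the forward-referenced Theorem~\ref{Theorem 4.1}, and naturality of the chain construction under quandle homomorphisms---is correct and is exactly the route the paper's organization suggests.

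There is one imprecision worth correcting. You write that Remark~\ref{Remark 3.5} ``records that the identifications produced by Theorem~\ref{Theorem 3.4} are natural only up to inner automorphism.'' This is not what the remark says: it \emph{asks} whether the resulting automorphism is always inner, and leaves the question open. This matters because, by Observation~\ref{Observation 1.5}, inner automorphisms (the maps $*_x$) act as the identity on homology. Hence if $\psi$ were determined up to inner automorphism, the homology class would be a genuine invariant with \emph{no} ambiguity at all---a stronger conclusion than Corollary~\ref{Corollary 3.7} claims. The correct statement is that $\psi$ is determined up to \emph{some} automorphism of the fundamental quandle (arising from different sequences of Roseman moves between the same diagrams), and since the paper does not establish that such automorphisms are inner, the corollary is phrased to allow the induced action of a general automorphism on homology. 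Once you replace ``inner automorphism'' by ``automorphism'' throughout, your argument matches the statement precisely and is complete.
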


\section{Roseman moves are preserving homology classes of fundamental cycles}\label{Section 4}

We deal in this section with the main result of our paper, about (co)cycle invariants of knottings.
 We start by describing precisely the case of a pass 
move, $R$, (generalization of the third Reidemeister move), that is a move of type $S(c,n+2,0)$ 
in notation of \cite{Ros-1,Ros-2}; see Section \ref{Section 6} (we write $R\in S(c,n+2,0)$).

\subsection{Colorings and homology under Roseman moves}\label{Subsection 4.1}

Fix a quandle $(X;*)$. We already established bijection, for any Roseman move $R$ 
between sets of colorings of $D_M$ and the set of colorings of $RD_M$, Theorem \ref{Theorem 3.4}. 
We denote this bijection 
by $R_{\#}$, so $R_{\#}: Col_X(D_M) \to Col_X(RD_M)$. In fact,  $R_{\#}$ is $X$-quandle-sets isomorphism,
 that is it preserves the right multiplication by elements of $X$ ( i.e. $R_{\#}(\phi*x) = R_{\#}(\phi)*x$). .

\begin{theorem}\label{Theorem 4.1} 
For a fixed $\phi \in Col_X(D_M)$ the cycles $c_{n+1}(D_M,\phi)$ and 
$c_{n+1}(RD_M,R_{\#}(\phi))$ are homologous in $H^Q_{n+1}(X)$. Similarly for $\tilde\phi\in Col_{X,sh}(D_M)$,
 the cycles $c_{n+2}(D_M,\tilde\phi)$ and 
$c_{n+2}(RD_M,R_{\#}(\tilde\phi))$ are homologous in $H^Q_{n+2}(X)$.
\end{theorem}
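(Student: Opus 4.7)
The plan is to work through Roseman's classification of moves in Section \ref{Section 6} and verify that in each case the difference $c_{n+1}(RD_M,R_{\#}\phi)-c_{n+1}(D_M,\phi)$ lies in $\partial C^Q_{n+2}(X)$, and similarly that $c_{n+2}(RD_M,R_{\#}\tilde\phi)-c_{n+2}(D_M,\tilde\phi)$ lies in $\partial C^Q_{n+3}(X)$. Roseman moves split naturally into three families: (a) those whose local support contains no crossings of multiplicity $n+1$, for which both chains are literally equal; (b) those that create or destroy a pair of $(n+1)$-crossings of opposite signs and identical colors (the higher-dimensional analogue of Reidemeister II), whose contributions to the chain cancel exactly as in the arc-matching analysis of Subsection \ref{Subsection 2.1} and the proof of Theorem \ref{Theorem 2.1} (any residual terms with coincident adjacent coordinates are killed in the quotient $C^Q_{*}=C^R_{*}/C^D_{*}$); and (c) the pass move $R\in S(c,n+2,0)$ itself, which is the only family with nontrivial content.

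For family (c), I would model $R$ as a smooth one-parameter family of diagrams, generic except at a single moment $t_0$ at which $n+2$ sheets become concurrent at a single point of $\R^{n+1}$. Near $t_0$ the local picture is precisely that of Definition \ref{Definition 2.2}, with hyperplanes $V_1,\ldots,V_{n+2}$ carrying source-chamber colors $(q_1,\ldots,q_{n+2})$ propagated via the recipe of Definition \ref{Definition 2.2}. For either $t<t_0$ or $t>t_0$, the local configuration resolves into exactly $\binom{n+2}{n+1}=n+2$ transverse $(n+1)$-crossings, one for each hyperplane that is momentarily omitted; the local colorings of these crossings unwind to the face terms $d_i^{(*_0)}(q_1,\ldots,q_{n+2})$ on one side of the move and $d_i^{(*)}(q_1,\ldots,q_{n+2})$ on the other, each multiplied by the sign $(-1)^i$ prescribed by the orientation convention preceding Definition \ref{Definition 1.16}. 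Assembling these contributions produces
\[
c_{n+1}(RD_M,R_{\#}\phi)-c_{n+1}(D_M,\phi)\;=\;\pm\,\partial(q_1,\ldots,q_{n+2}),
\]
which is by construction a boundary in $C^Q_{n+1}(X)$.

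The shadow assertion is proved by the same three-family case analysis with the source-chamber color $q_0$ prepended to every tuple, invoking the bijection $\tilde R_{\#}$ on shadow colorings guaranteed by Theorem \ref{Theorem 3.4} and Lemma \ref{Lemma 1.12}; the analogue of the displayed identity reads $c_{n+2}(RD_M,R_{\#}\tilde\phi)-c_{n+2}(D_M,\tilde\phi)=\pm\,\partial(q_0,q_1,\ldots,q_{n+2})$. The main obstacle in both the shadow and non-shadow setting is sign bookkeeping at the pass move: one must verify that the local signum of each of the $n+2$ newly appearing $(n+1)$-crossings, dictated by the ordering of the $V_i$'s by height before and after $t_0$, matches the coefficient $(-1)^i$ of the rack boundary in Definition \ref{Definition 1.2}, and one must confirm that all correction terms which fall outside the targeted $\partial(q_1,\ldots,q_{n+2})$ lie in the degenerate subcomplex $C^D_{*}$ and so vanish on passing to $C^Q_{*}$. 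This last point is precisely why the theorem is stated in quandle homology rather than in rack homology.
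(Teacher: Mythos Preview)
Your strategy, and in particular your treatment of the pass move in family (c), matches the paper's argument in Lemma \ref{Lemma 4.2}: the $(n+2)$ concurrent sheets at the singular moment determine an $(n+2)$-tuple $(q_1,\ldots,q_{n+2})$ (resp.\ $(q_0,q_1,\ldots,q_{n+2})$ in the shadow case) whose rack boundary is exactly the difference of the two chains. That part is fine.

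There is, however, a genuine gap in your case analysis. Your three families omit the mixed-type move $S(m,(1,n-1),0,p)$, the higher-dimensional analogue of the \emph{first} Reidemeister move. It does not fall under (a), since one side of the move contains a single crossing of multiplicity $n+1$; it does not fall under (b), since no cancelling pair is created or destroyed; and it is not the pass move (c). In this move a branch locus passes through the confluence of $n-1$ other sheets, producing on one side a lone $(n+1)$-crossing $p$ and on the other side nothing of maximal multiplicity. The point (item (iii) in the paper's proof) is that the branch set locally bounds the lower-decker set, so the two $n$-regions separated by the lowest sheet at $p$ carry the \emph{same} color; hence $c_{n+1}(p,\phi)$ has two equal adjacent entries and is degenerate. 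It is this move, not the pass move, that forces the theorem into $H^Q$ rather than $H^R$. Your closing remark that degenerate correction terms appear in the pass-move bookkeeping is therefore misplaced: in the paper's account the pass move yields $\pm\partial(q_1,\ldots,q_{n+2})$ on the nose, with no degenerate residue, while the need to kill degenerate chains arises entirely from the branch-point move you did not list.
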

The main, and, as we see later, essentially the only one nontrivial to check is the pass move $R$ of 
type $S(c,n+2,0)$.

\begin{lemma}\label{Lemma 4.2}
The pass move $R\in S(c,n+2,0)$ preserves the homology class of $c_{n+1}(L)$ and $c_{n+2}(L)$. 
To be precise let $\phi$ be a fixed coloring of $D_M$ and $R_{\#}(\phi)$ the corresponding coloring of $RD_M$ 
then the cycles $c_{n+1}(D_M,\phi)$ and 
$c_{n+1}(RD_M,R_{\#}(\phi))$ are homologous in $H^Q_{n+1}(X)$.
Similarly for $\tilde\phi\in Col_{sh,X}(D_M)$,
 the cycles $c_{n+2}(D_M,\tilde\phi)$ and
$c_{n+2}(RD_M,R_{\#}(\tilde\phi))$ are homologous in $H^Q_{n+2}(X)$.
\end{lemma}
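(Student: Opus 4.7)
The plan is to reduce the lemma to identifying the difference of the two chains with a boundary in the rack chain complex. The move $R$ is supported inside an open ball $B \subset \R^{n+1}$ in which $n+2$ regular sheets $V_1, V_2, \ldots, V_{n+2}$ meet in general position, ordered by height so that $V_i < V_{i+1}$ at their common intersections; outside $B$ the diagrams $D_M$ and $RD_M$ agree and $R_\#$ is the identity on colorings. The two sides of $R$ are the two perturbations of a (transient) multiplicity-$(n+2)$ configuration, each exhibiting exactly $n+2$ multiplicity-$(n+1)$ crossings $p_1, \ldots, p_{n+2}$, where $p_k$ is the intersection of the $n+1$ sheets $\{V_j : j \neq k\}$. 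I would fix coordinates on $B$ as in Definition \ref{Definition 2.2}, placing $V_i$ at the $i$-th coordinate hyperplane and the source chamber in the orthant $x_1, \ldots, x_{n+1} < -1$, and denote by $(q_0, q_1, \ldots, q_{n+2})$ the colors assigned by $\tilde\phi$ to the source chamber and to the sheets $V_1, \ldots, V_{n+2}$ in their source sectors.

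The key identity I would then establish is
\[
c_{n+2}(D_M,\tilde\phi) \;-\; c_{n+2}(RD_M,R_\#(\tilde\phi)) \;=\; \pm\,\partial\bigl(q_0, q_1, \ldots, q_{n+2}\bigr)
\]
in $C^R_{n+2}(X)$, from which the homology statement in $H^Q_{n+2}(X)$ is immediate after projection to the quandle quotient. To verify this, I would compute $c_{n+2}(p_k)$ on each side of the move using Definition \ref{Definition 2.2}: on one side, the source chamber of $p_k$ is reached from the global source without crossing $V_k$, contributing $\pm(q_0, q_1, \ldots, \widehat{q_k}, \ldots, q_{n+2})$; on the other side, the source chamber of $p_k$ lies across $V_k$ from the global source, and the shadow color together with the colors $q_j$ with $j < k$ get modified by $* q_k$, contributing $\pm(q_0 * q_k, q_1 * q_k, \ldots, q_{k-1} * q_k, q_{k+1}, \ldots, q_{n+2})$. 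Summing over $k$ and matching against the boundary formula of Definition \ref{Definition 1.2}(i) applied to the $(n+3)$-tuple $(q_0, q_1, \ldots, q_{n+2})$ produces exactly the identity above. The non-shadow statement then follows from Observation \ref{Observation 2.3}: since $c_{n+1}(\cdot,\phi) = \gamma_{n+2}\,c_{n+2}(\cdot,\tilde\phi)$ for any shadow extension and $(-1)^{n}\gamma$ is a chain map, applying $\gamma_{n+2}$ to the identity yields $c_{n+1}(D_M,\phi) - c_{n+1}(RD_M,R_\#(\phi)) = \pm\partial(q_1, \ldots, q_{n+2})$, again a boundary.

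The main obstacle is the sign bookkeeping in the second paragraph. One must track consistently the sign $sgn(p_k)$ of each crossing (which depends on the parity of the position of $V_k$ among the remaining sheets, times the global orientation sign $(-1)^{n(n+1)/2}$), the $(-1)^i$ from the boundary formula in Definition \ref{Definition 1.2}(i), and the identification of which side of the move ($D_M$ or $RD_M$) corresponds to each half of the boundary expression (the ``omission'' summand versus the ``$*q_i$'' summand). The already-worked $n=1$ case of Subsection \ref{Subsection 2.1} provides both a template and a sanity check: the matching patterns between $d_{i(p_1)}^{(*_{\epsilon(p_1,p_2)})}$ and $d_{i(p_2)}^{(*_{\epsilon(p_2,p_1)})}$ worked out there are exactly the specialization of the term-by-term matching above, and the four case analysis of Figure 2.1 becomes, for general $n$, the $2(n+2)$ summands on each side of the claimed boundary identity.
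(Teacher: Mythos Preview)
Your proposal is correct and follows essentially the same approach as the paper: you identify the difference of the two chains with $\pm\partial(q_0,q_1,\ldots,q_{n+2})$ by recognizing that the $n+2$ multiplicity-$(n+1)$ crossings on one side of the move contribute the face maps $d_k^{(*_0)}$ and those on the other side contribute $d_k^{(*)}$, then invoke Observation~\ref{Observation 2.3} for the non-shadow case. The paper phrases this using a time direction $\vec t$ and crossings $p_i,p'_i$ at $t=\pm1$, but the content is the same, and your acknowledgement that the sign bookkeeping is the delicate point matches the paper's own level of detail (which also leaves the signs largely implicit, illustrating only the $n=1$ case in Figure~4.1).
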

\begin{proof} We show the result for all possible types of pass moves (including all possible
co-orientation of sheets) at once. We start from $n+2$ sheets (hypersurfaces) in $\R^{n+2}$ with arbitrary
co-orientation, intersecting in a point $p$, and the direction of time $\vec t$ in general position 
to co-orientation vectors.\\
Before we give technical details, we first use a simple visualization of our proof: \\
On each side of the move, say for $t=-1$ and $t=1$ we have $n+2$ crossings $(p_1,...,p_{n+2})$ 
and $(p'_1,...,p'_{n+2})$ respectively.
Each crossing represents the intersection of $n+1$ sheets in $\R^{n+1}$ that is $p_i$ is the intersection 
of all sheets $V_1,...,V_{n+2}$ but $V_i$ at $t=-1$ and $p'_i$ is the intersection 
of all sheets but $V_i$ at $t=1$. 
 Furthermore, we have $sgn(p_i)=sgn(p'_i)$ for any $i \geq 1$. We concentrate on the case of 
a shadow coloring $\tilde\phi$ of $D_M$ (the non shadow case 
being similar). The weights associated to $p_i$ and
$p'_i$ are $sgn (p_i)d_i^{(*_0)}q_{sh}(p)$ and $sgn (p'_i)d_i^{(*)}q_{sh}(p)$ (the order depends on
co-orientation $\vec n_i$ of $V_i$). Furthermore, the sign of $p_i$ is $(-1)^{n-i}sgn (\vec n_i \cdot \vec t)$ 
that is the sign depend on whether $\vec n_i$ agrees or disagrees with $\vec t$ (that is the scalar product 
$\vec n_i \cdot \vec t$ is positive or negative).
In effect $c_{n+2}(p_i)- c_{n+2}(p'_i)= \epsilon (-1)^{-i}(d_i^{(*_0)}- d_i^{(*)})(q_{sh}(p)$, where 
$\epsilon = \pm 1$,  and in 
effect $c_{n+2}(D_M)-c_{n+2}(RD_M)= \pm \partial_{n+3}(q_{sh}(p))$.
Therefore 
$c_{n+2}(D_M)-c_{n+2}(RD_M)$ is homologous to zero in $H_{n+2}(X)$ as needed\footnote{We were informed 
by Scott Carter that this
observation was crucial in the definition of rack homology by
Fenn, Rourke and Sanderson. In particular, the relation of the generalized
Reidemeister move can be read from the boundary of singularity of 
one dimension higher. We deal then with a point $\hat p$ of multiplicity $n+2$ and we choose any time 
vector $\vec t$ in general position to normal vectors of $n+1$-dimensional hyperplanes. 
We shadow color neighborhood of $\hat p$ so that $q_{sh}(\hat p)= (q_0,q_1,...,q_{n+2})$. Then we analyze 
face maps $d_i^{*_0}(q_{sh}(\hat p))$ and $d_i^{*}(q_{sh}(\hat p))$ and recognize $q_{sh}$ of points 
$p_1$,...,$p_{n+2}$ and $p'_1,...,p'_{n+2}$ at crossection at $t=-1$ and $t=1$. Figure 4.1 illustrate 
it for $n=1$.}.
Similarly $c_{n+1}(D_M,\phi)-c_{n+1}(RD_M,R_{\#}(\phi))= \pm \partial_{n+2}(q(p))$. This follows also
directly by using Observation \ref{Observation 2.3}.
\end{proof}
\ \\
\centerline{\psfig{figure=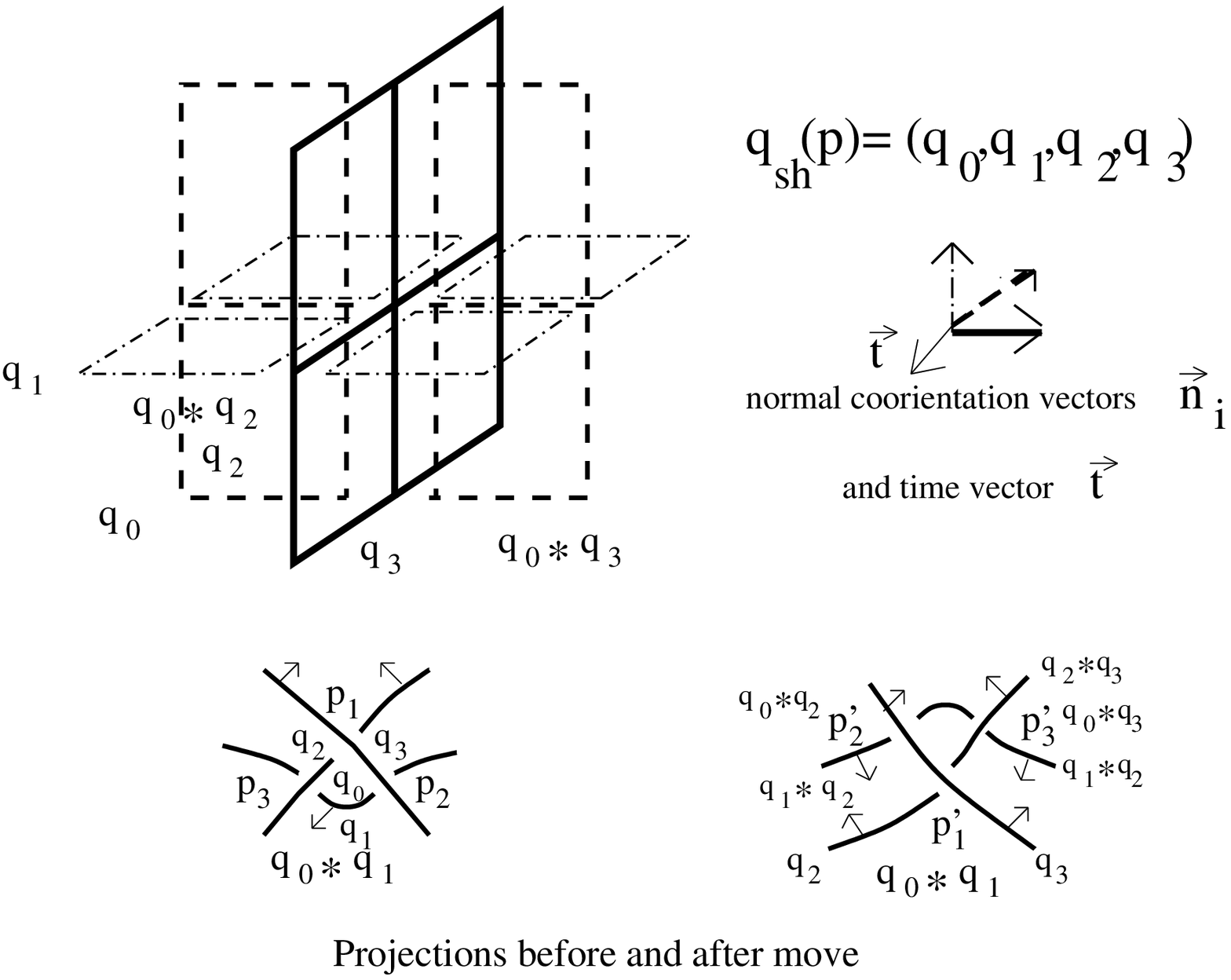,height=10.1cm}}
\ \ \\
\centerline{Figure 4.1;  From isotopy to pass move ($n=1$ case)}
\centerline{\small {$c_3(p_1,p_2,p_3)= (q_0,q_2,q_3)- (q_0,q_1,q_3)+ (q_0,q_1,q_2)=\partial^{(*_0)}(q_0,q_1,q_2,q_3)$}}
\centerline{\small {$c_3(p'_1,p'_2,p'_3)= (q_0*q_1,q_2,q_3)- (q_0*q_2,q_1*q_2,q_3)+ (q_0*q_3,q_1*q_3,q_2*q_3)
=\partial^{(*)}(q_0,q_1,q_2,q_3)$}}

\ \\

Now we can complete the proof of Theorem \ref{Theorem 4.1}:\\
 We use Roseman moves in more substantial way then before.  Details of Roseman theory is given in 
Section \ref{Section 6} were we follow \cite{Ros-1,Ros-2,Ros-3}. 
Here we give a short description referring often to that section.
  D.~Roseman proved that for any $n$ there is a finite number of moves
on link diagrams in ${\mathbb R}^{n+1}$ so that if two diagrams $F_1^{n}$ and
$F_2^{n}$  represent ambient isotopic links in ${\mathbb R}^{n+2}$ then our
diagrams are related by a finite number of Roseman moves. For $n=1,2$ and $3$,
the moves of Roseman were explicitly given (for $n=1$ these are classical Reidemeister moves).

We are showing that any Roseman move is preserving the homology class of $c_{n+1}(D_M)$ and $c_{n+2}(D_M)$.
Because only crossings of multiplicity $n+1$  are contributing to cycles $c_{n+1}(D_M)$ and $c_{n+2}(D_M)$,
thus is suffices to consider only those Roseman moves which involve singularities of multiplicity 
$n+1$ before or after the move. A precise definition of Roseman moves and their properties is 
given in Section \ref{Section 6} and here we need only the fact that there are exactly three types of moves of interest:
\begin{enumerate}
\item[(i)] A move of type $S(c,n+2,0)$ which we analyzed in Lemma \ref{Lemma 4.2} called the pass move or 
maximal crossing move or the generalized third Reidemeister move.
\item[(ii)] A move of type $S(c,n+1,0)$  (or its inverse a move of type $S(c,n+1,1))$.
This move describes two
cancelling crossing points of a knotting diagram and can be called the
generalized second Reidemeister move (in the case of $n=2,3$ they are moves $(e)$ in \cite{Ros-1}).\\
In the isotopy the arc of points of multiplicity $n+1$
 joins these crossing points, and they have opposite signs. Furthermore, up to sign,
this crossings have the same contributions to $c_{n+1}(D_M,\phi)$ (and $c_{n+2}(D_M,\tilde\phi)$). 
Thus in the state sum of Definitions \ref{Definition 1.16} and \ref{Definition 1.17} they do cancel.\\
Notice that we can interpret our situation as a special case of considerations in Subsection 2.2
(consider Figure 2.4 with $V_{vp_1}=-V_{vp_2}$ and $a=c$ as describing the knotting diagram before the move).

\item[(iii)] A move of type $S(m,(1,n-1),0,p)$ (with $p=0$ or $1$), where one side
of the isotopy has a point
of multiplicity $n+1$ (compare the move (f) in the case of $M^2$ in $R^4$ and the move ($\ell$)
in the case of $M^3$ in $R^5$ \cite{Ros-1}).\footnote{There is a misprint in \cite{Ros-1} page 353; it should be
$S(m,(1,1),0,0)$ or $S(m,(1,1),0,1)$ in place of $S(m,(1,2),0,0)$ or $S(m,(1,2),0,1)$.}
Then
the branch set $B$ is the boundary of the lower decker set $D_-$ so the lower decker set does not separate
the regions; thus both sides of this set have the same color. 
Therefore the chains corresponding to the  crossing of multiplicity  $n+1$ are degenerate in $C^R_{n+1}(X)$ 
and $C^R_{n+2}(X)$, thus these chains do not contribute to quandle homology $H^Q_{n+1}(X)$ and 
$H^Q_{n+2}(X)$, respectively.
\end{enumerate}
This complete our proof of Theorem \ref{Theorem 4.1}

If $M= M_1 \cup M_2 \cup... \cup M_k$ we can generalize Theorem \ref{Theorem 4.1} for 
a non-shadow coloring of $D_M$ (we use notation of Definition \ref{Definition 1.18}).
Our proof of Theorem \ref{Theorem 4.1} also work in this case:
\begin{corollary}\label{Corollary 4.3}
For a fixed $\phi \in Col_X(D_M)$  and a Roseman move $R$ the difference of chains before and after 
the move,  $c_{n+1}(D_M,\phi,i)) - c_{n+1}(RD_M,R_{\#}(\phi,i))$ is a boundary (so homologically trivial).
\end{corollary}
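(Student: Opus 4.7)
My plan is to revisit the three-case analysis used to establish Theorem~\ref{Theorem 4.1} and verify that restricting attention to crossings in $\mathcal{T}_i$ (those with bottom sheet in $M_i$) preserves each argument. A Roseman move is a local isotopy in a ball, so throughout the move the identity of each sheet, the component of $M$ it belongs to, and the height ordering of the sheets at every crossing inside the ball are all preserved. Consequently, under the natural pairing of crossings on the two sides of the move, paired crossings have identical bottom sheets and are either both in $\mathcal{T}_i$ or both outside $\mathcal{T}_i$.

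The easy cases are the branch move $S(m,(1,n-1),0,p)$ and the second-Reidemeister analog $S(c,n+1,0)$ (and its inverse). For the branch move, the crossing of multiplicity $n+1$ on one side has two sheets belonging to the same lower decker region (not separated by the branch locus), so they carry the same color; the associated chain $c_{n+1}$ is then degenerate and vanishes in $C^Q_{n+1}(X)$, so the restricted difference is zero in the quandle complex, hence a boundary. For the Reidemeister-II analog, the two cancelling crossings share the same set of $n+1$ sheets, so they have the same bottom sheet: either both belong to $\mathcal{T}_i$ (and then cancel exactly because they have equal chains and opposite signs) or neither does; in both subcases the restricted difference vanishes.

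The substantive case is the pass move $R \in S(c,n+2,0)$. Relabel the $n+2$ sheets meeting at the multiplicity $(n+2)$ space-time point $p$ so that $V_1 < V_2 < \cdots < V_{n+2}$ in height order. The two sides of the move produce crossings $p_j$ and $p'_j$ ($j = 1,\dots,n+2$), each omitting the sheet $V_j$. Since the local isotopy preserves the height ordering at $p$, the bottom sheet of $p_j$ and of $p'_j$ agree: it is $V_1$ when $j \neq 1$ and $V_2$ when $j = 1$. Setting $J_i = \{j : p_j \in \mathcal{T}_i\}$, the restricted difference therefore equals
\[
  \Delta_i \;=\; \sum_{j \in J_i} \bigl(c_{n+1}(p_j) - c_{n+1}(p'_j)\bigr).
\]
Applying $\gamma_{n+2}$ from Observation~\ref{Observation 2.3} to the shadow calculation in Lemma~\ref{Lemma 4.2} gives $c_{n+1}(p_j) - c_{n+1}(p'_j) = \epsilon(-1)^j\bigl[d_j^{(*_0)} - d_j^{(*)}\bigr](q(p))$ for a common sign $\epsilon = \pm 1$, and by Remark~\ref{Remark 1.10}(iii) the $j=1$ summand is identically zero since $d_1^{(*_0)} = d_1^{(*)}$.

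Splitting into two subcases according to whether $V_1$ lies in $M_i$ then completes the argument. If $V_1 \in M_i$, then $\{2,\dots,n+2\} \subseteq J_i$; inserting or omitting the identically zero $j=1$ summand makes no difference, so $\Delta_i$ equals the full signed sum $\epsilon\,\partial_{n+2}(q(p))$, which is a boundary. If $V_1 \notin M_i$, then $J_i \subseteq \{1\}$, and $\Delta_i$ either is empty or reduces to the $j=1$ term, which is zero. In every subcase the restricted difference is a boundary in $C^Q_{n+1}(X)$, as required. The main obstacle I foresee is confirming that the bottom-sheet identity is truly invariant under the pass isotopy; this needs a careful appeal to Roseman's local model of $S(c,n+2,0)$ in Section~\ref{Section 6} to see that the height ordering of the $n+2$ sheets at $p$ is fixed throughout the move.
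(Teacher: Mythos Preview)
Your proposal is correct and follows exactly the approach the paper intends: the paper's entire proof is the one-line assertion ``Our proof of Theorem~\ref{Theorem 4.1} also works in this case,'' and you have simply unpacked that claim case by case. The crucial observation you isolate---that the $j=1$ term vanishes because $d_1^{(*_0)}=d_1^{(*)}$ (Remark~\ref{Remark 1.10}(iii))---is precisely what the paper itself singles out later in the proof of Theorem~\ref{Theorem 4.8} as the reason the restricted (non-shadow) chain behaves well under the pass move.
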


\subsection{Cycle invariants of knottings}

To obtain invariants of knottings using Theorem \ref{Theorem 4.1} we can either sum over 
all colorings of the cycles $c_{n+1}(D_M,\phi)$ or take them as a set with multiplicity (in order not 
to loose an information that some colorings have the same cycle):
\begin{theorem}\label{Theorem 4.4} 
Let $(X;*)$ be a fixed quandle,  $f: M \to \R^{n+2}$ be an $n$-knotting,
$\pi:\R^{n+2}\to \R^{n+1}$ a regular projection, and $D_M$ the knotting diagram. We use the notation 
$[c]$ for a homology class of a cycle $c$.
\begin{enumerate}
\item[(1)] Let $[c_{n+1}(D_M),\phi]$ denotes the homology class of the cycle $c_{n+1}(D_M,\phi)$. 
For a finite $X$ the state sum, defined below   
$$[c_{n+1}(D_M)]= \sum_{\phi\in Col_X(D_M,\phi)} [c_{n+1}(D_M),\phi]$$ in 
the group ring  $ZH_{n+1}(X)$ is a topological invariant of a knotting $M$. Thus we can denote this 
invariant by  $c_{n+1}(M)$  and call it the (non-shadow) cycle invariant of a knotting $f: M \to \R^{n+2}$ 
(or shortly of $M$).
\item[(2)] The reduced (non-shadow) cycle invariant of the knotting $f: M \to \R^{n+2}$ 
$$c^{red}_{n+1}(M)= [c^{red}_{n+1}(D_M)]= \sum_{\phi\in Col_{red,X}(D_M)} [c_{n+1}(D_M,\phi)]$$
is a
topological invariant. Notation is explained as follows. 
We sum here over smaller number of colorings  using the fact that set of colorings $Col_X(D_M)$ 
is an $X$-quandle-set and as proven in Observation  \ref{Observation 1.5} 
$c_{n+1}(D_M),\phi)$ is homologous to $c_{n+1}(D_M),\phi*x)$, for any $x\in X$. Thus we take 
$Col_{red,X}(D_M)$ to be the subset of all $X$ coloring, one coloring from every orbit. Even if we can 
have various choices for $Col_{red,X}(D_M)$ the resulting $c^{red}_{n+1}(M)= [c^{red}_{n+1}(D_M)]$ is 
well defined. 

\item[(3)] Let $[c_{n+2}(D_M),\tilde \phi]$ denotes the homology class of the cycle $c_{n+2}(D_M,\tilde\phi)$. 
For a finite $X$ the state sum  $$[c_{n+2}(D_M)]= \sum_{\tilde\phi\in Col_{sh,X}(D_M)} [c_{n+2}(D_M,\tilde\phi)]$$ in 
the group ring  $ZH_{n+2}(X)$ is a topological invariant of a knotting $M$. Thus we denote this  
invariant by  $c_{n+2}(M)$  and call it the shadow cycle invariant of a knotting $M$.

\item[(4)] The reduced shadow cycle invariant of the knotting $M$ is a 
topological invariant  
$$c^{red}_{n+2}(M)= [c^{red}_{n+2}(D_M)]= \sum_{\tilde\phi\in Col_{red,sh,X}(D_M)} [c_{n+2}(D_M,\tilde\phi)].$$

\item[(5)] If in any sum of (1)-(4) we replace sum by a set with multiplicity, we obtain topological invariants,
$c^{set}_{n+1}(M)$, $c^{red,set}_{n+1}(M)$, 
$c^{set}_{n+2}(M)$, and  $c^{red,set}_{n+2}(M)$
respectively (we allow $X$ to be infinite here).

\end{enumerate}
\end{theorem}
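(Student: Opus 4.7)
The plan is to deduce all five parts from the combination of Theorem \ref{Theorem 4.1} with two earlier facts: the naturality of Roseman moves on colorings from Theorem \ref{Theorem 3.4} (together with Remark \ref{Remark 3.5}), and the fact from Observation \ref{Observation 1.5} that the right action of any $x\in X$ on the chain complex is chain homotopic to the identity. The whole argument is essentially a bookkeeping exercise, but the bookkeeping has to be done carefully in each of the four settings (shadow/non-shadow, full/reduced).

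First I would handle (1). Given a Roseman move $R$ connecting $D_M$ to $RD_M$, Theorem \ref{Theorem 3.4} gives a bijection $R_{\#}:Col_X(D_M)\to Col_X(RD_M)$, and Lemma \ref{Lemma 4.2} together with the analysis of the remaining Roseman moves in Theorem \ref{Theorem 4.1} shows $[c_{n+1}(D_M,\phi)]=[c_{n+1}(RD_M,R_{\#}(\phi))]$ in $H^Q_{n+1}(X)$. Re-indexing the sum over $Col_X(RD_M)$ through $R_{\#}$ shows the two state sums agree in $\Z H^Q_{n+1}(X)$. Since any two diagrams of $M$ are connected by finitely many Roseman moves, this gives a well-defined invariant $c_{n+1}(M)$. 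Part (3) is identical, replacing $\phi$ by $\tilde\phi$, $Col_X$ by $Col_{sh,X}$, and invoking the shadow statement in Theorem \ref{Theorem 4.1}; here we also use that $R_{\#}$ extends to shadow colorings by Theorem \ref{Theorem 3.4}(iii).

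Part (2) is the step with the only real content beyond (1), so I would do it next. The $X$-quandle-set structure on $Col_X(D_M)$ partitions colorings into orbits; $Col_{red,X}(D_M)$ is a set of orbit representatives. I need to verify two things: first, independence of the particular representatives chosen, and second, invariance under Roseman moves. For the first, if $\phi$ and $\phi*x$ lie in the same orbit, I apply Observation \ref{Observation 1.5} to the cycle $c_{n+1}(D_M,\phi)$: the right action $*_x$ on $C^R_{n+1}(X)$ descends to the identity on $H^Q_{n+1}(X)$, and by the definition of the colored chain, $*_x$ carries $c_{n+1}(D_M,\phi)$ to $c_{n+1}(D_M,\phi*x)$. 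Hence $[c_{n+1}(D_M,\phi)]=[c_{n+1}(D_M,\phi*x)]$, so the sum over any choice of transversal is the same element of $\Z H^Q_{n+1}(X)$. For the second, Remark \ref{Remark 3.5} asserts $R_{\#}$ is a morphism of $X$-quandle-sets, so it sends orbits to orbits bijectively and a transversal to a transversal; combined with Theorem \ref{Theorem 4.1} this reduces the Roseman invariance to part (1). Part (4) is the shadow analogue: the same two observations work verbatim once we note $Col_{sh,X}(D_M)$ is also a right $X$-quandle-set and Observation \ref{Observation 1.5} applies chain-level to $c_{n+2}$.

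Finally, part (5) requires only that all the above equalities hold per-coloring up to the bijection $R_{\#}$, not just after summation; this is exactly what Theorem \ref{Theorem 4.1} plus Observation \ref{Observation 1.5} provide. Concretely, the multiset $\{[c_{n+1}(D_M,\phi)]\}_{\phi}$ is carried to $\{[c_{n+1}(RD_M,\psi)]\}_{\psi}$ element-by-element via $R_{\#}$ with equal homology classes, so the two multisets in $H^Q_{n+1}(X)$ coincide; the reduced multiset versions follow by combining this with the orbit argument of (2) and (4). This also removes the finiteness assumption on $X$, since we no longer need to form a formal sum in a group ring. The main obstacle in the whole proof is the well-definedness step in (2) and (4): one must be careful that the homotopy from Observation \ref{Observation 1.5} really operates at the level of the colored cycle and not just on a generic $n$-tuple, but the check is immediate because $c_{n+1}(D_M,\phi)$ is a $\Z$-linear combination of tuples of colors and $*_x$ acts entrywise.
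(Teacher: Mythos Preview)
Your proposal is correct and is precisely the argument the paper has in mind. In fact the paper does not supply a separate proof of Theorem \ref{Theorem 4.4}: the statement itself already points to Theorem \ref{Theorem 4.1} and Observation \ref{Observation 1.5} as the ingredients, and your write-up simply makes explicit the bookkeeping (bijection $R_\#$ on colorings from Theorem \ref{Theorem 3.4}/Remark \ref{Remark 3.5}, per-coloring homology equality from Theorem \ref{Theorem 4.1}, and orbit-independence from Observation \ref{Observation 1.5}) that the authors leave to the reader.
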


\subsection{Cocycle invariants of knottings}
In this subsection we reformulate our main result in the language of cocycles and cohomology.
For a fixed quandle $(X;*)$ and fixed  cocycles in $C^{n+1}(X,A)$ and $C^{n+2}(X,A)$ we 
obtain directly cocycle invariants of $n$-knotting. It generalizes the case of $n=1,2$ (see \cite{CKS-3} for a summary),
and the case of $n=3$ checked in \cite{Rosi-2}. We start from the definition which 
involves diagrams.

\begin{definition}\label{Definition 4.5} Let $(X;*)$ be a fixed quandle,  $f: M \to \R^{n+2}$ an $n$-knotting,
$\pi:\R^{n+2}\to \R^{n+1}$ a regular projection, and $D_M$ the knotting diagram.
\begin{enumerate}
\item[(1)]  For a fixed coloring $\phi\in Col_X(D_M)$ and $(n+1)$-cocycle $\Phi: {\Z}X^{n+1} \to A$
we define the value $\Phi(D_M,\phi)\in A$ by
$$\Phi(D_M,\phi) = \Phi(c_{n+1}(D_M,\phi))=\sum_p \Phi(c_{n+1}(p,\phi)),$$
where $\Phi(c_{n+1}(p,\phi))$ is a Boltzmann weight of the crossing $p$ of multiplicity $n+1$, and the sum 
is taken over all crossings of $D_M$.
\item[(2)]  We can also take into account the fact that $M$ is  not necessary connected 
(following \cite{CENS,CKS-3} in the case $n=1$).
That is if $M=M_1\cup M_2\cup...\cup M_k$,
we can take the sum 
from  (1)  not over all crossings $p$ of multiplicity $n+1$ but only those which
have $M_i$ on the bottom of the crossings. Let us denote such a set of crossings by ${\mathcal T}_i$. Then
we define
$$\Phi(D_M,\phi, i)= \sum_{p\in {\mathcal T}_i}  \Phi(c_{n+1}(p,\phi))$$
\item[(3)]
For a fixed shadow coloring $\tilde\phi\in Col_{X,sh}(D(M))$ and $(n+2)$-cocycle $\tilde\Phi: {\Z}X^{n+1} \to A$,
we define the value $\tilde\Phi(D_M,\tilde\phi)\in A$ by the formula:
$$\tilde\Phi(D_M,\tilde\phi) = \tilde\Phi(c_{n+2}(D_M,\tilde\phi))=\sum_p \tilde\Phi(c_{n+2}(p,\tilde\phi)).$$
\end{enumerate}
\end{definition}
\begin{theorem}(Cocycle invariants)\label{Theorem 4.6} Consider a knotting $f:M \to \R^{n+2}$ and
fix a quandle $(X;*)$ and $(n+1)$-cocycle $\Phi: {\Z}X^{n+1} \to A$ and  
$(n+2)$-cocycle $\tilde\Phi: {\Z}X^{n+2} \to A$.
\begin{enumerate}
\item[(1)] For a fixed colorings $\phi \in Col_X(D_M)$,
the element $\Phi_X(D_M,\phi)\in A$ is  preserved by any  Roseman 
move $R$ that is $\Phi_X(D_M,\phi)=\Phi_X(RD_M,R_{\#}(\phi))$ in $A$.
\item[(2)] If $M=M_1\cup M_2\cup...\cup M_k$ then the conclusion of (1) holds also for $\Phi(D_M,\phi, i)$, 
that is $\Phi_X(D_M,\phi,i)=\Phi_X(RD_M,R_{\#}(\phi),i)$ in $A$.
\item[(3)]  For a fixed shadow coloring  $\tilde\phi \in Col_{X,sh}(D_M)$, the element
$\tilde\Phi_X(D_M,\tilde\phi)\in A$ is preserved by any  Roseman                      
move, that is 
$\tilde\Phi_X(D_M,\tilde\phi)= \tilde\Phi_X(RD_M,R_{\#}(\tilde\phi))$ in $A$.
\end{enumerate}
\end{theorem}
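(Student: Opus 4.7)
The plan is to deduce Theorem \ref{Theorem 4.6} as a direct cohomological dualization of Theorem \ref{Theorem 4.1} and Corollary \ref{Corollary 4.3}. The key observation is that any quandle $(n+1)$-cocycle $\Phi \in C^{n+1}_Q(X,A) = \mathrm{Hom}(C^Q_{n+1}(X),A)$ descends to a homomorphism $H^Q_{n+1}(X) \to A$, so its value on any cycle depends only on the homology class of that cycle. Consequently the cocycle evaluation of our diagrammatic chain can change under a Roseman move $R$ only if the underlying homology class changes, and Theorems \ref{Theorem 2.1} and \ref{Theorem 4.1} already preclude this.

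For part (1), I would first rewrite the Boltzmann weight sum as an evaluation on the fundamental chain,
\[
\Phi_X(D_M,\phi) \;=\; \sum_p \Phi(c_{n+1}(p,\phi)) \;=\; \Phi\bigl(c_{n+1}(D_M,\phi)\bigr).
\]
By Theorem \ref{Theorem 2.1} the argument on the right is a cycle in $C^Q_{n+1}(X)$, and by Theorem \ref{Theorem 4.1} its class in $H^Q_{n+1}(X)$ coincides with that of $c_{n+1}(RD_M,R_{\#}(\phi))$. Since $\Phi$ annihilates both the degenerate subcomplex and all quandle boundaries, the two evaluations agree in $A$. Part (3) is obtained verbatim after the substitutions $c_{n+1} \rightsquigarrow c_{n+2}$, $\phi \rightsquigarrow \tilde\phi$, $\Phi \rightsquigarrow \tilde\Phi$, and $H^Q_{n+1}(X) \rightsquigarrow H^Q_{n+2}(X)$, invoking the shadow half of Theorems \ref{Theorem 2.1} and \ref{Theorem 4.1}.

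Part (2) requires slightly more care because, as emphasized after Definition \ref{Definition 1.18}, the partial chains $c_{n+1}(D_M,\phi,i)$ are in general not cycles, so one cannot speak of their homology class. However, Corollary \ref{Corollary 4.3} supplies a $\beta \in C^R_{n+2}(X)$ with
\[
c_{n+1}(D_M,\phi,i) - c_{n+1}(RD_M,R_{\#}(\phi),i) \;\equiv\; \partial\beta \pmod{C^D_{n+1}(X)}.
\]
Applying the cocycle condition $\delta\Phi = 0$ together with the vanishing of $\Phi$ on degenerate chains then immediately gives $\Phi_X(D_M,\phi,i) - \Phi_X(RD_M,R_{\#}(\phi),i) = \Phi(\partial\beta) = 0$.

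There is no genuine obstacle; the only point that demands attention is that the hypothesis ``$(n+1)$-cocycle $\Phi\colon \Z X^{n+1}\to A$'' must be read as a \emph{quandle} cocycle (i.e., one vanishing on $C^D_{n+1}(X)$), since Theorem \ref{Theorem 4.1} delivers an equality in $H^Q_*$ rather than in $H^R_*$; the degenerate contributions coming from Roseman moves of type $S(m,(1,n-1),0,p)$ (item (iii) in the proof of Theorem \ref{Theorem 4.1}) are otherwise not guaranteed to be killed.
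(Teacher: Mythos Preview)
Your proposal is correct and follows essentially the same approach as the paper: the paper's proof simply states that Theorem \ref{Theorem 4.6} ``follows directly from the analogous result for homology,'' i.e., from Theorem \ref{Theorem 4.1} and Corollary \ref{Corollary 4.3}, and explicitly stresses (as you do for part (2)) that one does not need $c_{n+1}(D_M,\phi,i)$ to be a cycle since a cocycle can be evaluated on any chain. Your write-up is more detailed, and your final remark about needing $\Phi$ to be a \emph{quandle} cocycle (so that the degenerate contributions from moves of type $S(m,(1,n-1),0,p)$ are killed) is exactly right and consistent with the paper's setup.
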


\begin{proof} Theorem \ref{Theorem 4.6} follows directly from the analogous result for homology.
We should stress that we do not need here the property that $c_{n+1}(D_M,\phi,i)$  are cycles, as we 
can evaluate a cocycle on any chain. Furthermore, we can work also with tangles not only with knotting diagrams.
\end{proof}

To produce invariant of a knotting we should make our invariants of diagram independent on 
a choice of a coloring (and use Theorems \ref{Theorem 4.4} and \ref{Theorem 4.6}).
As before, two natural solutions are to take a set with multiplicity of 
invariants over all colorings or, for finite $X$, sum invariants over all colorings as usually 
is done in statistical mechanics.

\begin{definition}\label{Definition 4.7} Let $(X;*)$ be a fixed quandle,  $f: M \to \R^{n+2}$ be an $n$-knotting, 
$\pi:\R^{n+2}\to \R^{n+1}$ a regular projection, and $D_M$ the knotting diagram.
\begin{enumerate} 
\item[(1)]  Let $\Phi$ be a fixed cocycle  in $C_Q^{n+1}(X)$. Then:
\begin{enumerate}
\item[(i)] $$\Phi_X(D_M) = \sum_{\phi\in Col_X(D_M)} \Phi_X(D_M,\phi) = \sum_{\phi} \prod_p \Phi(c_{n+1}(p,\phi)),$$
where $X$ is a finite quandle. Here we have classical cocycle invariant in $ZA$, written as a state sum 
($A$ n a multiplication notation) and 
generalizing cocycle invariants of \cite{CKS-3}.
\item[(ii)]
$$\Phi_X(D_M, i)= \sum_{\phi\in Col_X(D_M)} \Phi_X(D_M,\phi, i),$$
where $M=M_1\cup M_2\cup...\cup M_k$ and we consider only the crossings
 which have $M_i$ on the bottom of the crossings.
For $n=1,2$ this invariant of $M$ with ordered components was described in \cite{CENS,CKS-3}.

\item[(iii)]
$$\Phi_X^{red}(D_M) = \sum_{\phi\in Col_{X,red}(D_M)} \Phi_X(D_M,\phi).$$
Here we use the fact that $X$ acts on $Col_X(D_M)$ and we can choose in the sum one 
representative from any orbit. Any choice is good (see Observation \ref{Observation 1.5})
 and we write for the chosen subset $Col_{X,red}(D_M)$,\footnote{For example, 
for the trivial knotting $S^n \subset \R^{n+2}$, finite $X$ and any cocycle, we have 
$\Phi_X(S^2)= |X|\cdot 1$ and $\Phi_X^{red}(S^2)= |{\mathcal O}_r|\cdot 1$, where 
${\mathcal O}_r$ is the set of orbit of the action of $X$ on $X$ on the right. Furthermore,
 in our notation $1$ is a zero of an abelian group written 
multiplicatively, and $|{\mathcal O}_r|\cdot 1$ is an element of a group ring $Z(ZX^{n+1})$.}

\item[(iv)] $$\Phi_X^{red}(D_M,i) = \sum_{\phi\in Col_{X,red}(D_M)} \Phi_X(D_M,\phi,i);$$
here we reduce crossings as in (ii) and colorings as in (iii).
\item[(v)] Without restriction to finite $X$ we can repeat all definitions of (i)-(iv) by 
considering, in place of the sum over  colorings, the set of invariants indexed by colorings 
(thus we have a set with multiplicities, or better cardinalities of elements if $X$ is infinite).
We get $\Phi_{set}(D_M)$, $\Phi_{set}(D_M,i)$, $\Phi^{red}_{set}(D_M)$, and $\Phi^{red}_{set}(D_M,i)$, 
respectively.
\end{enumerate}
\item[(2)] For a fixed $(n+2)$-cocycle $\tilde\Phi: {\Z}X^{n+2} \to A$, 
we define:
\begin{enumerate} 
\item[(i)]
$$\tilde\Phi_X(D_M) = \sum_{\tilde\phi\in Col_{X,sh}(D_M)} \tilde\Phi_X(D_M,\tilde\phi) = \sum_{\tilde\phi}
 \prod_p \tilde\Phi(c_{n+2}(p,\tilde\phi)),$$ 
where $X$ is a finite quandle. Here we have classical shadow cocycle invariant in $ZA$
\item[(ii)]
$$\tilde\Phi_{X,red}(D_M) = \sum_{\tilde\phi\in \tilde Col_{X,red}(D_M)} \tilde\Phi_{X,red}(D_M,\tilde\phi).$$
\item[(iii)] Without restriction on $X$ to be finite, we can repeat definitions of (i) and (ii) by
considering, in place of the sum over  colorings, the set with multiplicity of invariants indexed by colorings.
We get $\tilde \Phi_{set}(D_M)$ and $\tilde \Phi_{set,red}(D_M)$.
\end{enumerate}
\end{enumerate}
\end{definition}

\begin{theorem}(Cocycle invariants)\label{Theorem 4.8} Consider a knotting $f:M \to \R^{n+2}$ and 
for a fixed quandle $(X;*)$,  quandle cocycles $\Phi: {\Z}X^{n+1} \to A$ and
 $\tilde\Phi: {\Z}X^{n+2} \to A$. Then:

\begin{enumerate}                                                                         
\item[(1)] If $X$ is finite then $\Phi_X (M)$  $\Phi_X^{red}(M)$, $\tilde\Phi_X (M)$, and  $\tilde\Phi_X^{red}(M)$
 are topological invariants of the knotting 
(i.e. independent on a diagram,  invariant under Roseman moves). They are called cocycle, and shadow cocycle 
invariants of a knotting $M$.
\item[(2)] For any $X$, $\Phi_{set}(M)$, $\Phi_{set}^{red}(M)$,  $\tilde\Phi_{set} (M)$, and $\tilde\Phi_{set}^{red}(M)$
 are topological invariants of the knotting $M$.
\item[(3)] If $M=M_1\cup...\cup M_n$ and $X$ is finite then  $\Phi_X (M,i)$ and $\Phi_X^{red}(M,i)$ are
 topological invariants of the knotting $M$ with ordered components. Similarly, for any $X$ the sets with multiplicity 
$\Phi_X^{set} (M,i)$ and $\Phi_{X,red}^{set}(M,i)$ are  topological invariants of the knotting $M$ with ordered components.
\item[(4)] We can make invariants of (3) to be independent on the order of components if we take the set with
multiplicity of invariants over all $i$.
\end{enumerate}
\end{theorem}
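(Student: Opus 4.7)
My plan is to reduce Theorem \ref{Theorem 4.8} to the per-coloring invariance already established in Theorem \ref{Theorem 4.6}, using the bijection of colorings $R_{\#}$ from Theorem \ref{Theorem 3.4} together with the fact that the right $X$-action on chain groups is chain-homotopic to the identity (Observation \ref{Observation 1.5}). All four parts of the theorem are aggregations of the per-coloring statement, so the work consists of verifying that the aggregation procedures (summing in $ZA$, or collecting into multisets, possibly with a choice of orbit representatives) are compatible with these bijections.

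For parts (1) and (2), fix a Roseman move $R$. Theorem \ref{Theorem 4.6}(1) gives $\Phi_X(D_M,\phi)=\Phi_X(RD_M,R_{\#}(\phi))$ for every $\phi\in Col_X(D_M)$, and Theorem \ref{Theorem 3.4} guarantees that $R_{\#}$ is a bijection. Summing both sides over $\phi$ (for finite $X$) yields $\Phi_X(D_M)=\Phi_X(RD_M)$; collecting into multisets yields $\Phi_{set}(D_M)=\Phi_{set}(RD_M)$. The shadow versions follow from Theorem \ref{Theorem 4.6}(3) using the analogous bijection $\tilde R_{\#}$ on shadow colorings.

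The reduced versions require the extra observation that the map $\phi\mapsto \Phi_X(D_M,\phi)$ descends to the orbit space of the right $X$-action on $Col_X(D_M)$. Indeed, from Definition \ref{Definition 1.16} one checks that $c_{n+1}(D_M,\phi*x)$ is obtained from $c_{n+1}(D_M,\phi)$ by applying the chain-level action $*_x$; by Observation \ref{Observation 1.5}, this action differs from the identity by $\partial_{n+2}((-1)^{n+1}h_x)+(-1)^{n}h_x\partial_{n+1}$, which on cycles reduces to the boundary of $(-1)^{n+1}h_x c_{n+1}$. Since $\Phi$ is a cocycle it vanishes on this boundary, so $\Phi_X(D_M,\phi)=\Phi_X(D_M,\phi*x)$. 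Because $R_{\#}$ is $X$-equivariant (Remark \ref{Remark 3.5}), it takes orbits bijectively to orbits, so any choice of orbit representatives on the two sides produces matching sums and matching multisets. This handles the reduced parts of (1) and (2) in both the non-shadow and shadow settings.

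For parts (3) and (4), I would use Corollary \ref{Corollary 4.3} in place of Theorem \ref{Theorem 4.1}: for every component index $i$, the difference $c_{n+1}(D_M,\phi,i)-c_{n+1}(RD_M,R_{\#}(\phi),i)$ is a boundary, so cocycle evaluation gives $\Phi_X(D_M,\phi,i)=\Phi_X(RD_M,R_{\#}(\phi),i)$ (this is Theorem \ref{Theorem 4.6}(2)). The same aggregation arguments as above then yield invariance of $\Phi_X(M,i)$, $\Phi_X^{red}(M,i)$, and their multiset versions with respect to a fixed ordering of components. Part (4) is then immediate, since passing to the multiset over $i$ symmetrises the indexing. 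The main obstacle is the reduced case: one must carefully verify both that Observation \ref{Observation 1.5}'s chain homotopy $h_x$ makes cocycle evaluation genuinely well-defined on $X$-orbits, and that the $X$-equivariance of $R_{\#}$ aligns the orbit decompositions on the two sides of a Roseman move. Everything beyond these two points is routine bookkeeping of bijections.
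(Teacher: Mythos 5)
Your proposal is correct and follows essentially the same route as the paper: the paper's own proof is a one-line reduction to Theorems \ref{Theorem 4.4} and \ref{Theorem 4.6}, whose ingredients are exactly the ones you spell out (the bijections $R_{\#}$ of Theorem \ref{Theorem 3.4}/Remark \ref{Remark 3.5}, Observation \ref{Observation 1.5} for the reduced versions, and Corollary \ref{Corollary 4.3}/Theorem \ref{Theorem 4.6}(2) for the component-restricted case). The only thing the paper adds is the remark explaining why no shadow analogue of (3) is stated, namely that the component trick needs $d_1^{(*_0)}=d_1^{(*)}$, which fails in the shadow setting where the relevant face map is $d_2$; also note that for the reduced invariant in (3) the chains $c_{n+1}(D_M,\phi,i)$ are not cycles (Definition \ref{Definition 1.18}), so the orbit-independence there is not quite the "routine" cycle argument you use in (1)--(2), though the paper is equally terse on this point.
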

\begin{proof} It follows directly from Theorems \ref{Theorem 4.4} and \ref{Theorem 4.6}. 
Notice here that for shadow coloring the idea of considering $M=M_1\cup M_2\cup...\cup M_k$
and only crossings where $M_i$ is on the bottom will not work as $d_2^{(*_0)}$ usually differs from $d_2^{(*)}$, 
thus $c_{n+2}^{red}(p_i) - c_{n+2}^{red}(p'_i)$ is not necessary homological to zero..
In the non-shadow case we only needed $d_1^{(*_0)}=d_1^{(*)}$. 
\end{proof}

\section{Twisted (co)cycle invariants of knottings}\label{Section 5}
Twisted homology (and cohomology) was introduced in \cite{CENS}.
Most of the results of the paper generalize, without much changes to the twisted case so 
we give a concise explanation. 

\begin{definition}\label{Definition 5.1}
\begin{enumerate}
\item[(i)] The twisted chain complex of a shelf $(X;*)$ is given by the chain modules
$C^T_n(X)=Z[t^{\pm 1}]X^n$ (that is a free modules with basis $X^n$ and with coefficients in a ring of Laurent polynomials
in variable $t$), and the chain map $\partial^T= t\partial^{*_0}- \partial^{*}$. Recall that:
$$\partial^{*_0}(x_1,...,x_n)=\sum_{i=1}^n(-1)^i(x_1,...,x_{i-1},x_{i+1},...,x_n), \mbox{ and}$$
$$\partial^{*_0}(x_1,...,x_n)=\sum_{i=1}^n(-1)^i(x_1*x_i,...,x_{i-1}*x_i,x_{i+1},...,x_n).$$
\item[(ii)] If $(X;*)$ is a spindle (e.g. a quandle) we define as in  the untwisted case the degenerate  
and quandle homology. Thus as before we consider $H^{TW}_n(X)$ for $W=R,D$ and $Q$.
\item[(iii)] The cohomology $H^n_{TW}(X,A)$ are defined in a standard way with $A$ being an $Z[t^{\pm 1}]$-module.
\end{enumerate}
\end{definition}

The theory of cocycle invariants, for n=1 or 2, was introduced in \cite{CES-1} for $n=1,2$.
We give definition for any $n$-knotting below.
Our description follow \cite{CKS-3}, the important tool we use is the classical Alexander numbering
of chambers in $(\R^{n+1},\pi f(M))$ (see \cite{CKS-0,CKS-3}). Our version of the definition refers to shadow
colorings by an (extended) shift rack structure on integers with infinity $(\Z\cup \infty;*_s)$ where $a*_sb=a+1$ 
(in particular $\infty *b=\infty$).
\begin{definition} \label{Definition 5.2}
\begin{enumerate}
\item[(i)] Let $X$ be a set and $f:X \to X$ a bijection with a fixed point $b$. We define a rack $(X;*_f)$ by $a*_fb=f(a)$.
Then for a given knotting diagram $D_M$ the shadow rack coloring of chambers of the knottings,
is called the generalized Alexander numbering. More precisely, we color regions of the diagram trivially by $b$,
choose one chamber and color it by an  element of $X-b$ and the resulting shadow coloring of chambers is a 
generalized Alexander numbering.
\item[(ii)] The Alexander coloring of Chambers (e.g. \cite{CKS-3}) starts from the rack 
$(\Z\cup \infty;*_s)$ and the unbounded chamber is colored by $0$.
\end{enumerate}
\end{definition}

\begin{definition}(Twisted chains of knotting)\label{Definition 5.3} 
Let $f: M \to \R^{n+2}$ be an $n$-knotting, $\pi:\R^{n+2}\to \R^{n+1}$ a regular projection, and $D_M$ the
knotting diagram. Furthermore, fix a rack or quandle $X$ , a coloring $\phi: {\mathcal R}\to X$, and a
shadow coloring $\tilde\phi$.
\begin{enumerate}
\item[(1)] If $p$ is a crossing of multiplicity $(n+1)$ then we define the chain (twisted Boltzmann weight) 
associated to $p$ as $c_{n+1}^T(p,\phi)= t^{-k(R_0)}c_{n+1}(p,\phi)$, where $k(R_0)$ is the Alexander 
numbering of the source region in the neighborhood of $p$ and $c_{n+1}(p,\phi)$ is the untwisted 
Boltzmann weight.
\item[(2)] In the case $\tilde\phi$ is the shadow coloring we define a twisted shadow Boltzmann weight by:
$c_{n+2}^T(p,\tilde\phi)= t^{-k(R_0)}c_{n+2}(p,\tilde\phi)$, where $c_{n+2}(p,\tilde\phi)$ is the untwisted 
shadow Boltzmann weight associated to $p$.
\item[(3)] The twisted chain associated to the diagram $D_M$ is
$$c_{n+1}^T(D_M,\phi)=\sum_{p\in Crossings}c_{n+1}^T(p,\phi).$$
\item[(4)] Finally, we sum over all $X$ colorings of $D_M$ so the result is in the
group ring over $C^T_{n+1}(X)$ (in fact, it is in the group ring of $H^T_{n+1}(X)$) 
 It is convenient here to use multiplicative notation for chains so that
$c^T_{n+1}(D_M,\phi)=(\Pi_p(q_1,...,q_{n+1})^{sgn p})^{t^{-k(R_0)}}$ and then
$$c^T_{n+1}(D_M)= \sum_{\phi}c^T_{n+1}(D_M,\phi).$$
\item[(5)] We define the twisted shadow chain associated to the diagram $D_M$ in an analogous manner:
$$c_{n+2}^T(D_M,\tilde\phi)=\sum_{p\in crossings}c_{n+2}^T(p,\tilde\phi).$$ 
Then we sum over all colorings to get:
$$c^T_{n+2}(D_M)= \sum_{\tilde\phi}c^T_{n+2}(D_M,\tilde\phi).$$
\item[(6)] As in untwisted version we can consider smaller sum by taking into account only one element from 
each orbit of action by $X$ on the space of colorings. However we should be careful here about
 which action we consider
because 
the action $(x_1,...,x_n)*x$ is equal on homology to $t\cdot Id$ according to Observation \ref{Observation 1.5}
Thus we should change this 
action to $(x_1,...,x_n) \to t^{-1}(x_1,...,x_n)*x$. We obtain then reduced versions of (4) and (5). 
\item[(7)] Each of the above has its cocycle version as long as we choose a twisted $(n+1)-$ and 
$(n+2)$-cocycles in $C_T^{n+1}(X)$ and $C_T^{n+2}(X)$, respectively. 
\end{enumerate}
\end{definition}
Most of the results as in Theorems \ref{Theorem 4.1}, \ref{Theorem 4.6}, and \ref{Theorem 4.8} 
generalizes without any problem to twisted (co)homology.
We give two examples below.

\begin{theorem}\label{Theorem 5.4}
For a fixed $\phi \in Col_X(D_M)$ the chain $c^T_{n+1}(D_M,\phi)$ is a cycle and it is homologous to 
$c^T_{n+1}(RD_M,R_{\#}(\phi))$ in $H^{TQ}_{n+1}(X)$, where $R$ is any Roseman move on a diagram $D_M$. 
Similarly for a fixed shadow coloring  $\tilde\phi\in Col_{X,sh}(D_M)$,
 the chain $c^T_{n+2}(D_M,\tilde\phi)$ is a cycle and it is homologous to
$c^T_{n+2}(RD_M,R_{\#}(\tilde\phi))$  in $H^{TQ}_{n+2}(X)$.
\end{theorem}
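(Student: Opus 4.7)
The proof follows the structure of Theorem~\ref{Theorem 2.1} (cycle property) and Theorem~\ref{Theorem 4.1} (Roseman invariance); the only new ingredient is careful bookkeeping of the Alexander-numbering weight $t^{-k(R_0)}$ attached to each crossing, so that the twist factor $t$ in $\partial^T=t\partial^{(*_0)}-\partial^{(*)}$ is absorbed correctly.

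First, for the cycle property, I would revisit the pairing argument of Subsection~2.2. As before, each arc $\gamma$ of multiplicity $n$ joins two $(n+1)$-crossings $p_1,p_2$, or else runs into a branch set of lower multiplicity and produces only chains in the degenerate subcomplex $C^D_*$, hence zero in $C^{TQ}$. Along $\gamma$, the contributions of $p_1$ and $p_2$ to $\partial^T c^T_{n+2}(D_M,\tilde\phi)$ involve the face maps $d_{i(p_j)}^{(*_{\epsilon(p_j,p_{3-j})})}$ of Subsection~2.2. The key new geometric lemma, which becomes the technical heart of the proof, is
\[
k(R_0(p_2))-k(R_0(p_1))\;=\;\epsilon(p_1,p_2)-\epsilon(p_2,p_1),
\]
since a short path from $R_0(p_1)$ to $R_0(p_2)$ travelling near $\gamma$ crosses $V_{vp_i}$ (with the sign dictated by co-orientation) precisely when $\epsilon(p_i,p_{3-i})$ places the other crossing on the non-source side of $V_{vp_i}$. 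This shift converts the $t$ in front of $\partial^{(*_0)}$ into exactly the correct twist between $t^{-k(R_0(p_1))}$ and $t^{-k(R_0(p_2))}$, so the term-by-term cancellations of Subsection~2.2 persist verbatim in $(C^T_*(X),\partial^T)$. The cycle statement for $c^T_{n+1}$ then follows by the same bookkeeping, or more efficiently by applying the first-coordinate cut $\gamma_{n+2}$ of Observation~\ref{Observation 2.3}, which is a chain map for both $\partial^{(*_0)}$ and $\partial^{(*)}$ and hence for $\partial^T$.

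Second, I would repeat the Roseman analysis of Subsection~4.1 with the same correction. Only moves that create or destroy $(n+1)$-crossings contribute: the branch-point move $S(m,(1,n-1),0,p)$ produces a chain in the degenerate subcomplex (both decker regions share a color), the cancellation moves $S(c,n+1,0),S(c,n+1,1)$ pair two $(n+1)$-crossings of opposite sign sharing the same source chamber so they cancel at the chain level (the Alexander numbers agree trivially), and the pass move $R\in S(c,n+2,0)$ is the interesting case. Following Lemma~\ref{Lemma 4.2}, choose a time vector $\vec t$ in general position at the multiplicity-$(n+2)$ point $\hat p$ of the isotopy, set $\hat k:=k(\hat R_0)$, and note that the source chamber of $p'_i$ sits on the opposite side of $V_i$ from the source chamber of $p_i$, so its Alexander index is $\hat k\pm 1$. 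The same Alexander-matching identity then yields
\[
c^T_{n+2}(D_M,\tilde\phi)-c^T_{n+2}(RD_M,R_{\#}\tilde\phi)\;=\;\pm\,t^{-\hat k}\,\partial^T_{n+3}\bigl(q_{sh}(\hat p)\bigr),
\]
which is a $\partial^T$-boundary. The statement for $c^T_{n+1}$ again follows either from the same bookkeeping or from Observation~\ref{Observation 2.3}.

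The main obstacle is the Alexander-numbering identity quoted above: one must verify, uniformly across all possible co-orientations and relative positions of the vertical sheets $V_{vp_i}$ (and of the $n+2$ sheets meeting at $\hat p$), that the Alexander indices of adjacent source chambers shift by exactly the amount dictated by $\partial^T$. This is geometric rather than algebraic, and is most cleanly handled by placing $\gamma$ (resp.\ $\hat p$) in the coordinate normal form of Definition~\ref{Definition 2.2}, from which the Alexander index of each chamber can be read off directly from the signs of its coordinates.
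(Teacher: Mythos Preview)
Your proposal is correct and follows essentially the same approach as the paper, which merely states that ``the main, nontrivial Roseman move to check is the pass move $R$ of type $S(c,n+2,0)$; here analysis is very similar to that of Theorem~\ref{Theorem 2.1} and Lemma~\ref{Lemma 4.2}.'' You have expanded this one-line reference into the actual argument, correctly isolating the Alexander-numbering shift identity as the one genuinely new bookkeeping step that makes the twist factor $t$ in $\partial^T=t\partial^{(*_0)}-\partial^{(*)}$ match up with the weights $t^{-k(R_0)}$; the paper leaves this implicit.
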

The main, nontrivial Roseman move to check is the pass move $R$ of
type $S(c,n+2,0)$. Here analysis is very similar to that of Theorem \ref{Theorem 2.1} and Lemma \ref{Lemma 4.2}.

\begin{theorem}(Twisted cocycle invariants)\label{Theorem 5.5} 
Consider a knotting $f:M \to \R^{n+2}$ and
fix a quandle $(X;*)$ and $(n+1)$-twisted cocycle $\Phi^T: \Z [t^{\pm 1}]X^{n+1} \to A$ 
and  $(n+2)$-cocycle $\tilde\Phi^T: {\Z [t^{\pm 1}]}X^{n+2} \to A$ where $A$ is a $[t^{\pm 1}]$-module.
\begin{enumerate}
\item[(1)] For a fixed colorings $\phi \in Col_X(D_M)$,
the element $\Phi_X(D_M,\phi)= \Phi^T(c^T_{n+1}(D_M,\phi)\in A$ is  preserved by any  Roseman
move $R$ that is $\Phi_X(D_M,\phi)=\Phi_X(RD_M,R_{\#}(\phi))$ in $A$.
\item[(2)]  For a fixed shadow coloring  $\tilde\phi \in Col_{X,sh}(D_M)$, the element
$\tilde\Phi_X(D_M,\tilde\phi) = \tilde\Phi^T(c^T_{n+2}(D_M.\tilde\phi) \in A$ is preserved by any  Roseman
move, that is
$\tilde\Phi_X(D_M,\tilde\phi)= \tilde\Phi_X(RD_M,R_{\#}(\tilde\phi))$ in $A$.
\item[(2)] We can sum now over coloring of a finite quandle $X$, or sum over reduced colorings, or just take a set
over coloring, to get twisted cocycle invariants of a knotting.
\end{enumerate}
\end{theorem}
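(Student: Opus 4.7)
The plan is to deduce Theorem \ref{Theorem 5.5} directly from Theorem \ref{Theorem 5.4} together with the bijectivity of $R_{\#}$ on the spaces of (shadow) colorings established in Theorem \ref{Theorem 3.4}, mirroring exactly the route from Theorem \ref{Theorem 4.1} to Theorem \ref{Theorem 4.8} in the untwisted setting. The only genuinely new ingredient is the bookkeeping forced by the twisted action on reduced colorings described in Definition \ref{Definition 5.3}(6).

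For parts (1) and (2), I would argue as follows. By Theorem \ref{Theorem 5.4}, the chains $c^T_{n+1}(D_M,\phi)$ and $c^T_{n+1}(RD_M,R_{\#}(\phi))$ are cycles representing the same class in $H^{TQ}_{n+1}(X)$; hence for any twisted $(n+1)$-cocycle $\Phi^T$ with values in the $\Z[t^{\pm 1}]$-module $A$ one has
\[
\Phi^T\bigl(c^T_{n+1}(D_M,\phi)\bigr)=\Phi^T\bigl(c^T_{n+1}(RD_M,R_{\#}(\phi))\bigr)\in A,
\]
which is exactly the asserted equality $\Phi_X(D_M,\phi)=\Phi_X(RD_M,R_{\#}(\phi))$. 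The shadow statement is identical, substituting $\tilde\phi$, $\tilde\Phi^T$, $c^T_{n+2}$, and using the shadow half of Theorem \ref{Theorem 5.4}. I would remark, as in the proof of Theorem \ref{Theorem 4.6}, that cocyclicity is used only to pass from a boundary to $0$ in $A$, so the argument works equally well on tangles where $c^T_{n+1}$ and $c^T_{n+2}$ are only relative cycles.

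For part (3), I would combine (1)--(2) with the fact that $R_{\#}\colon Col_X(D_M)\to Col_X(RD_M)$ is a bijection (Theorem \ref{Theorem 3.4}). Summation over all colorings of a finite $X$ then gives invariance by reindexing the sum by $R_{\#}$. The set-with-multiplicity version is immediate, being the multiset $\{\Phi^T(c^T_{n+1}(D_M,\phi))\}_{\phi}$ which, term by term, is taken bijectively to the corresponding multiset on $RD_M$. For reduced invariants I would use Observation \ref{Observation 1.5}: the diagonal right $X$-action on $C^T_{n+1}(X)$ induces multiplication by $t$ on $H^{TQ}_{n+1}(X)$, so $[c^T_{n+1}(D_M,\phi\ast x)]=t\cdot[c^T_{n+1}(D_M,\phi)]$; accordingly the correct renormalized orbit representative is $t^{-1}c^T_{n+1}(D_M,\phi\ast x)$, which is well-defined since $A$ is a $\Z[t^{\pm 1}]$-module. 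Picking one representative from each orbit then gives a sum independent of the choice of representatives.

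The main technical obstacle is exactly this last point: verifying that the $t$-weighted reduction yields a well-defined element of $A$ and that $R_{\#}$ intertwines the (reduced) $X$-action with its image on $Col_X(RD_M)$. The first is handled by the chain homotopy $(-1)^{n+1}h_x$ from Observation \ref{Observation 1.5} applied to $\partial^T=t\partial^{(\ast_0)}-\partial^{(\ast)}$, which gives $\mathrm{Id}-t\cdot\ast_x$ up to boundaries. The second follows from the naturality of $R_{\#}$ as an $X$-quandle-set isomorphism (Remark \ref{Remark 3.5}), so orbits are permuted by $R_{\#}$ and the sum over a transversal on each side matches. Once these two checks are in place, (3) follows formally, and the unordered-set and ordered-component variants are dispatched as in Theorem \ref{Theorem 4.8}.
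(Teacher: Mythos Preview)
Your proposal is correct and follows precisely the route the paper itself takes: the paper offers no separate proof of Theorem \ref{Theorem 5.5} beyond the remark that the results of Theorems \ref{Theorem 4.1}, \ref{Theorem 4.6}, and \ref{Theorem 4.8} ``generalize without any problem to twisted (co)homology,'' and your argument simply spells out that deduction from Theorem \ref{Theorem 5.4} and the bijectivity of $R_{\#}$. Your handling of the reduced-coloring case via the $t$-renormalized action is exactly the content of Definition \ref{Definition 5.3}(6) together with Observation \ref{Observation 1.5}, so nothing is missing.
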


\begin{remark}\label{Remark 5.6}
If we work with racks and rack (or degenerate) homology we cannot ignore degenerate elements, 
so the Roseman move of type  $S(m,(1,n-1),0,p)$ (generalized first Reidemeister move) 
cannot be performed on the diagrams without possibly changing 
(co)homology class of (co)cycles. For other Roseman moves however all our results work well.
Thus we have (co)cycle invariants of diagrams of $n$-knottings 
up to all Roseman moves except moves of type  $S(m,(1,n-1),0,p)$,
for any rack.
\end{remark}

\section{General position and Roseman moves in codimension 2}\label{Section 6}
An important tool our work is given by the work of Roseman on 
general position of isotopy on moves in co-dimension two and the moves 
he developed. The next subsections follow \cite{Ros-1,Ros-2,Ros-3}. We have used these notion in the paper; here is 
more formal development.

Before we can define Roseman moves we need several definitions.
 
\subsection{General position}\label{Subsection 6.1}\ 
Let $M=M^n$ be a closed smooth $n$-dimensional manifold and 
 $f: M \to {\mathbb R}^{n+2}$ its smooth embedding which is called a smooth knotting. Define
$\pi: {\mathbb R}^{n+2}\to {\mathbb R}^{n+1}$ given by $\pi(x_1,....,x_{n+1},x_{n+2})= (x_1,....,x_{n+1})$ to be a 
projection on the first $n+1$ coordinates.
 The projection of the knotting is the set
$M^*= \pi f (M)$. Crossing set $D^*$ of the knotting, is the closure in $M^*$ of the set
of all points $x^*\in M^*$ such that $(\pi f )^{-1}(x^*)$ contains at least two points.\\
We define the double point set $D$ as
  $D= (\pi f )^{-1}(D^*)$. The branch set $B$ of $f$ is the set of all points $x\in M$ such that
$\pi f$ is not an immersion at $x$. In general, if $A\subset M$ then $A^*$ denote $\pi f(A)$.

\begin{definition}\label{Definition 6.1} 
Let $f: M \to {\mathbb R}^{n+2}$ be a smooth knotting with branch set $B$ and double point set $D$.
We say that $f$ is in general position with respect to the projection $\pi$ if the following six conditions
hold:\\
(1) $B$ is a closed $n-2$ dimensional submanifold of $M$.\\
(2) $D$ is a union of immersed closed $(n-1)$-dimensional
submanifolds of  $M^n$   with normal crossings. Denote the set of
points of $D$ where normal crossings occur as $N$ and call this
the self-crossing set of $D$.\\
(3) $B$  is a submanifold of  $D$  and for any  $b_0 \in B$ there
is a small  $(n-1)$-dimensional open sub-disk  $V$  with $b_0 \in
V$, $ V \subseteq  D$  such that  $V-B$  has two components $V_0$
and  $V_1$ ,  each of which is an  $(n-1)$-disk which is embedded
by the restriction of   $\pi \circ   f$  but with $V_0^* = V_1^*$ 
(Figure 2.1).\\
(4)  $B$  meets $N$  transversely.\\
(5) $(\pi \circ    f)|B$  is an immersion of  $B$
      with normal crossings.\\
(6)  The crossing set of  $B^*$   is transverse to the crossing
set of $(D-B)^*$. 
\end{definition}

\centerline{\psfig{figure=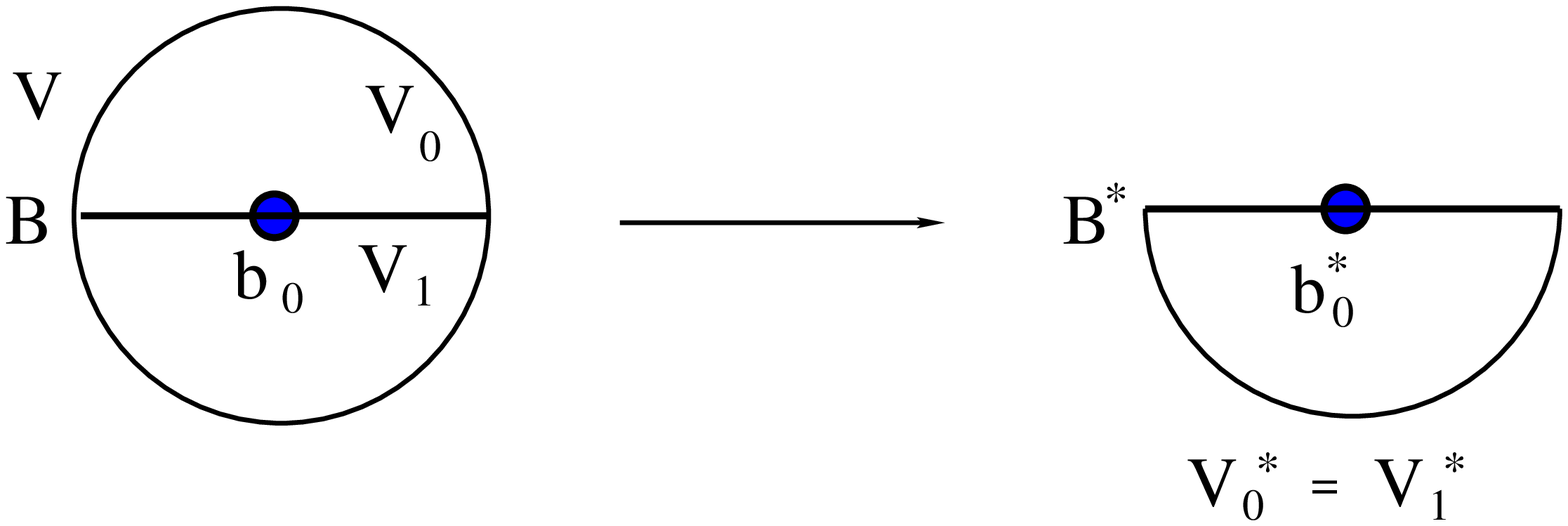,height=5.0cm}}
\begin{center}
 Figure 6.1; projecting (folding) of $V=V_0\cup  V_1\cup (B\cap V)$ onto $(V-B)^*=V_0^*=V_1^*$  
\end{center}

\begin{theorem}\label{Theorem 6.2}  Given a knotting
$f \colon M^n \to {\mathbb R}^{n+2} $      we may isotope  $f$  to a
map which is in general position with respect to the projection $\pi$. 
\end{theorem}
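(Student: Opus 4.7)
The plan is to obtain all six general position conditions by a finite sequence of small perturbations of $f$, each controlled by Thom's jet or multi-jet transversality theorem. Since embeddings form an open subset of $C^\infty(M,\R^{n+2})$ and each transversality condition imposed below is open and dense, a generic embedding $f$ will simultaneously satisfy all six conditions. It is convenient to work with the composition $g=\pi\circ f : M\to \R^{n+1}$, as every clause of Definition 6.1 is phrased in terms of $g$ and its derivatives, and varying $f$ by an $\R^{n+1}$-valued perturbation of its first $n+1$ coordinates varies $g$ through a neighborhood in $C^\infty(M,\R^{n+1})$.

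First I would handle condition (1). The branch set $B$ is the set of points where $dg_x$ fails to be injective, i.e.\ has corank $\geq 1$. In $J^1(M,\R^{n+1})$ the stratum $\Sigma^{n-1}$ of corank-one jets is a smooth submanifold of codimension $2$; higher corank strata have codimension $\geq 6$. By Thom's transversality theorem, after perturbing $f$ the section $j^1 g$ is transverse to all these strata, so $B=(j^1 g)^{-1}(\Sigma^{n-1})$ is a closed smooth $(n-2)$-submanifold, with higher-corank contributions of codimension at least $6$ in $M$ which can be avoided by a further transverse perturbation (or, for $n<6$, automatically empty).

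For condition (2) I would apply multi-jet transversality. For each $r\geq 2$ consider the multi-point map $F_r : M^{(r)}\to(\R^{n+1})^r$, $F_r(x_1,\ldots,x_r)=(g(x_1),\ldots,g(x_r))$, where $M^{(r)}$ is the configuration space of ordered $r$-tuples of distinct points of $M$. Requiring $F_r$ transverse to the small diagonal $\Delta_r\subset(\R^{n+1})^r$ for $r=2,3,\ldots,n+2$ makes $F_r^{-1}(\Delta_r)$ a manifold of the expected dimension $n-r+1$ (hence empty for $r=n+2$); projecting to $M$ presents $D$ as an immersion of an $(n-1)$-manifold with normal crossings and gives the self-crossing set $N$. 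Conditions (3)--(6) are then secured by further transversality. For (3), transversality of $j^2 g$ to the appropriate corank-one second-order stratum yields the classical fold normal form $g(u_1,\ldots,u_n)=(u_1,\ldots,u_{n-1},u_n^2)$ in suitable local coordinates at each $b_0\in B$, from which one reads off directly that $B\subset D$ and that a neighborhood $V\subset D$ splits as $V_0\sqcup (B\cap V)\sqcup V_1$ with $g(V_0)=g(V_1)$. Conditions (4), (5) and (6) are transversality assertions for the submanifold $B$ against the multi-jet loci defining $N$, for the restricted map $g|_B$ (generic immersion with normal crossings), and for the two multi-jet loci defining $B^*$-crossings and $(D-B)^*$-crossings respectively; each is open and dense.

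The main obstacle is condition (3), since it is not a purely first-order condition: one needs the quadratic fold normal form, whose genericity is the content of the Whitney–Morin theorem on stable singularities of corank one, or equivalently the statement that after a generic perturbation the Hessian of $g$ restricted to $\ker(dg)$ is non-degenerate at each branch point. Once this second-order input is in hand, the remaining clauses are routine transversality statements in jet and multi-jet spaces, and because the intersection of finitely many open dense subsets of $C^\infty(M,\R^{n+2})$ remains dense, all six conditions can be achieved by an arbitrarily small perturbation of $f$, yielding the required isotopy.
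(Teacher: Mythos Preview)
The paper does not prove Theorem~6.2 at all; Section~\ref{Section 6} explicitly follows Roseman \cite{Ros-1,Ros-2,Ros-3}, and the theorem is simply quoted. Your overall plan---establish each clause of Definition~6.1 by (multi\nobreakdash-)jet transversality and then intersect finitely many open dense conditions---is the standard route and is essentially Roseman's argument.

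There is, however, a genuine error in your handling of condition~(3). The map $g=\pi\circ f$ goes from $M^n$ into $\R^{n+1}$, so the target has dimension \emph{larger} than the source. The normal form $(u_1,\ldots,u_{n-1},u_n^{2})$ you invoke has only $n$ components: it is the \emph{fold}, the local model for corank-one Morin singularities of maps $M^n\to N^p$ with $p\le n$, and it is not applicable here. For generic maps $M^n\to\R^{n+1}$ the corank-one singularity is the \emph{Whitney umbrella} (cross-cap), with local model
\[
g(u_1,\ldots,u_n)=\bigl(u_1,\ldots,u_{n-2},\;u_{n-1},\;u_{n-1}u_n,\;u_n^{\,2}\bigr),
\]
which has the required $n+1$ components. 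From this model one reads off $B=\{u_{n-1}=u_n=0\}$ (dimension $n-2$, matching condition~(1)), the local double point set $D=\{u_{n-1}=0\}$, and the two half-disks $V_0=\{u_{n-1}=0,\ u_n>0\}$, $V_1=\{u_{n-1}=0,\ u_n<0\}$ with $V_0^{*}=V_1^{*}$, exactly as condition~(3) demands. With your fold model the singular locus would be $(n-1)$-dimensional, already contradicting condition~(1), and the ``Whitney--Morin'' theorem you cite governs the wrong regime. A smaller point: for $n\ge 6$ the corank-two stratum has codimension $6\le n$ and is \emph{not} emptied by a transverse perturbation; one must argue separately that Roseman's definition is still achievable (or restrict attention to the cases Roseman actually treats).
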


Similarly we define what it means for an isotopy  $F: M\times I \to {\mathbb R}^{n+2}\times I$  to be in
general position with respect to the projection $\pi^\prime =\pi\times Id$ . It
is just the previous definition for general position of a
codimension two knotting except that  $B$  and  $D$  may have
nonempty boundary. In particular, $F_0=F/(M\times \{0\})$, $F_1=F/(M\times \{1\}): M\to {\mathbb R}^{n+2}$ are 
smooth knottings in general position.

\subsection{Arranging for moves} 
We put on our isotopy additional conditions called arranging for moves \cite{Ros-3}.
Roughly speaking, we filtrate $D^*$ of $F: M\times I \to {\mathbb R}^{n+2}\times I$ in such a way 
that the projection $p: {\mathbb R}^{n+1}\times I \to I$ restricted to any component of each stratum, $Q^{(i)}$, 
is a Morse style function. 
\begin{definition}[Roseman]
Let $q$ be a proper immersion of a manifold $Q$ in ${\mathbb R}^{n+1}\times I$. We say that $q(Q)$ is 
immersed in Morse style if $pq$ is a Morse function, where $p: {\mathbb R}^{n+1}\times I \to I$. 
We assume that a Morse function has critical points on different levels. 
\end{definition}

For details see \cite{Ros-3}. Here we just mention that $Q^{(0)}$ is
the crossing set of $B^*$, $Q^{(1)}=B^*$, the projection of the branch set of $F$.
$Q^{(2)}=D^*$, generally, $Q^{(k)}$, $k>1$ is the closure of the subset of $D^*$ such that $F'=\pi'F$ is at least 
$k$ to $1$. Roseman proves:
\begin{theorem}[Roseman]\label{Theorem Ros}
 Any isotopy $F: M\times I \to {\mathbb R}^{n+2}\times I$ can be arranged for moves.
\end{theorem}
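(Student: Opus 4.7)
The plan is to begin with an isotopy $F \colon M\times I \to \mathbb{R}^{n+2}\times I$ already placed in general position with respect to $\pi'=\pi\times\mathrm{Id}$, which is legitimate by Theorem \ref{Theorem 6.2} (applied to the embedding $F$ of $M\times I$, where the branch set $B$ and double point set $D$ are permitted to have boundary on $M\times\partial I$). In this situation the filtration $Q^{(0)}\subset Q^{(1)}\subset Q^{(2)}\subset\cdots$ of the image consists of (immersed, properly embedded off the next lower stratum) submanifolds of $\mathbb{R}^{n+1}\times I$, of strictly decreasing dimension, whose self-intersection loci lie in the next stratum down. The goal is to further perturb $F$ so that $p\circ q$ is a Morse function on each component of each stratum, with all critical values distinct.

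The main step uses a transversality/genericity argument. Consider the space $\mathrm{Diff}_0(\mathbb{R}^{n+2}\times I)$ of diffeomorphisms preserving the product structure $(\mathbb{R}^{n+2})\times I$ up to time-level preserving isotopy (equivalently, diffeomorphisms of $\mathbb{R}^{n+2}\times I$ that are fiberwise over $I$). For each stratum $Q^{(k)}$, the subset of such diffeomorphisms $\psi$ for which $p\circ\psi\circ F$ restricts to a Morse function on (every component of) $Q^{(k)}$ is residual by the classical Morse-function genericity theorem, applied componentwise and stratum by stratum. Taking a countable intersection — the strata are locally finite and each has at most countably many components — yields a residual set of $\psi$ achieving Morse style on all strata at once. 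Since general position is an open condition on $F$, choosing $\psi$ close enough to the identity preserves it.

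Next, to arrange distinct critical values, I would compose with a small time reparametrization $I\to I$ (a diffeomorphism isotopic to the identity). Because each stratum is proper and Morse, its critical points are isolated, and global local finiteness guarantees only countably many critical values; a generic small perturbation of the time coordinate separates them. Openness of the Morse condition ensures the previously established Morse-style structure is preserved under this small perturbation.

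The main obstacle I expect is the interaction between strata: each $Q^{(k)}$ is not a manifold but has its singularities precisely on $Q^{(k-1)}$, so the Morse condition on $Q^{(k)}$ must be formulated on its smooth part, with compatibility at the boundary with the (already arranged) Morse structure on $Q^{(k-1)}$. This is handled by working from the smallest stratum outward and invoking the openness of general position together with Thom's stratified transversality; once this bookkeeping is set up, the Morse-theoretic content is routine. The compatibility requirement — that critical points on $Q^{(k)}$ not collide with those on $Q^{(j)}$ for $j<k$ — is subsumed into the final reparametrization step.
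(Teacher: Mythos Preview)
The paper does not actually prove this theorem. It is stated as Roseman's result, attributed to him in the theorem heading, and the surrounding text explicitly says ``For details see \cite{Ros-3}.'' So there is no proof in the paper against which to compare your proposal; the authors are quoting a known theorem from the literature and using it as a black box.

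That said, your sketch is a plausible outline of the kind of argument one expects: put $F$ in general position, then use genericity of Morse functions stratum by stratum (from the lowest-dimensional stratum upward), and finally separate critical values by a small reparametrization of $I$. A few points deserve care if you want this to be more than an outline. First, the strata $Q^{(k)}$ are not closed submanifolds but rather immersed manifolds whose closures meet lower strata; making $p$ Morse on the open part of $Q^{(k)}$ while controlling behavior near $Q^{(k-1)}$ is exactly where Thom--Mather stratified transversality enters, and this is the technical heart, not a bookkeeping afterthought. Second, your space $\mathrm{Diff}_0(\mathbb{R}^{n+2}\times I)$ of ``fiberwise'' diffeomorphisms is too restrictive: a level-preserving diffeomorphism of $\mathbb{R}^{n+2}\times I$ does not in general move the strata of $D^*\subset\mathbb{R}^{n+1}\times I$ in a way that changes $p|_{Q^{(k)}}$ usefully, since the strata live downstairs after projection by $\pi'$. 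One really perturbs the isotopy $F$ itself (through isotopies), not just by post-composition with ambient diffeomorphisms. Third, the claim that general position is open is correct but not entirely trivial in this multijet setting.

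If you intend to supply a proof where the paper gives none, you should either flesh out these points or, more appropriately, follow the paper and cite Roseman \cite{Ros-3} directly.
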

 
\subsection{Listing of moves after Roseman}
The standard set of moves $\mathcal{M}^n$ is described as follows:\\
Fix a dimension  $n$  and
suppose we are given an isotopy  $F \colon M^n \times I \to {\bf
{\mathbb R}^{n+2}} \times I$ which is arranged for moves.  This gives a
sequence of elementary singularities. Each singularity will
correspond to a standard local knot move in our collection
$\mathcal{M}^n$.

In the notation which follows, we consider three general types of
points:

\begin{enumerate}

\item \label{btype} \textbf{branch type:} critical points of  $B^*$
and self-crossing points of  $B^*$   for which we use the letter
$b$

\item  \label{ctype} \textbf{crossing type:} critical points
of  $D^*$   and the crossing set of  $D^*$   which do {\em not} belong
to  $B^*$ for which we use the letter  $c$.

\item   \label{mtype} \textbf{mixed type:} critical points which are
in the crossing set of    $D^*$   {\em and} are in    $B^*$,  a
``mixed'' type for which we use the letter  $m$.

\end{enumerate}
  The first collection of branch type points is denoted
$\{S(b,k,p,q)\}$.  If  $x^* \in D^*$ is such a singular point,
where  $D^*$ is the crossing set of an isotopy  $F \colon M^n \times I \to
{\mathbb R}^{n+2} \times I$,  let  $k$  denote the number of points of
$F'^{-1} (x^* )$.  In our case the branch point set $B^*$ of $F$, 
is codimension 2 in $M^n \times I$
that is it is of 
dimension $n-1$. \\

If in projection this branch set intersects
itself generically, the self-intersection set will have dimension
$n-4$.  It follows that  $ 1\leq  k \leq n-2$. The integer $p$ is
the index of the singularity. The integer $q$ has range  $0 \leq q
\leq k$ and might be called {\it transverse index} of this
critical point. This is defined as follows. If $x \in B$ consider
a curve $\delta$ in $D$ transverse to $B$ (recall that $B$ has
codimension one in $D$) so that $\delta^*$ is, except for the
point $x$, the two-to-one image of $\delta$. In the $I$ direction,
the image of this curve has a local maximum or a local minimum at
$b$. Now suppose $b^*$ is a $k$-fold point of $B^*$ then we have
$k$ such curves to consider. The number  $q$ is the number of
those curves for which we have a local maximum. Of course, it
follows that $k-q$ of the curves have a local minimum.

 The next collection of crossing type singularities  is denoted by
$\{S(c,k,p)\}$.  If  $x^*$   is such a singularity,  $k$ denotes
the cardinality of  $F'^{-1}  (x^*)$.    Thus  $k$ is an integer $2
\leq  k  \leq n+2$.  Furthermore, on this set of points, where $F'$
is $k$-to-one,  $x^*$  is a critical point in the $I$ direction,
of index  $p$. A single point has index $0$ by convention.

     Finally  $S(m,(i,j),p,q)$   denotes a mixed
singularities.  Such a singularity $x^*$
has   $F'^{-1}  (x^*)$
consisting of  $i+j$  points, where exactly  $i$  of these points
are in  $B$.  Again  $p$  is the index of the singularity and  $q$
 is an  integer, $ 0 \leq  q \leq  i$  which is the number of
local maxima we get by looking at those  $i$  arcs  transverse to
$B$ at the points of   $F'^{-1}  (x^* ) \cap B$.

\section{A knotting $M^n \stackrel{f}{\rightarrow} F^{n+1}\times [0,1] 
\stackrel{\pi}{\rightarrow} F^{n+1}$}\label{Section 7}

The Roseman (local) moves can be used for any $n$-knotting $f: M^n \to W^{n+2}$ by the following 
classical PL-topology result following from Theorem 6.2 in \cite{Hud} (we will use it in a smooth 
case which can be derived using Whitehead results on triangulation of smooth manifolds).

\begin{lemma}\label{Lemma 7.1}
If $C$ is a compact subset of a manifold
$W$ and $F:W\times I \to W$ is the isotopy of $W$ then there is another
isotopy $\hat F:W\times I \to W$ such that\\
 $F_0 ={\hat F}_0$,
$F_1/C = {\hat F}_1/C$ and there exists a number $N$ such that
the set\\ $\{x\in W \ |\ \hat F/\{x\}\times (k/N,(k+1)/N) \ is \
not \ constant\}$
sits in a ball embedded in $W$.
\end{lemma}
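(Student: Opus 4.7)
The plan is to subdivide the time interval $I$ into finitely many pieces so small that on each piece the isotopy $F$ only moves $C$ inside a coordinate ball of $W$, and then to replace $F$ piecewise by ambient isotopies supported in those balls.

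First I would consider the compact track $K := F(C \times I) \subset W$. Cover $W$ by open coordinate balls (each homeomorphic to an open ball in $\R^{\dim W}$) and extract a finite subcollection $B_1, \ldots, B_m$ covering $K$. Pulling back by $F$ gives an open cover $\{F^{-1}(B_j)\}$ of the compact product $C \times I$; by the Lebesgue number lemma there is $\delta > 0$ such that any set of diameter less than $\delta$ is contained in some $F^{-1}(B_j)$. Choose $N$ with $1/N < \delta$, so that for every $k = 0, \ldots, N-1$ there is an index $i_k$ with $F(C \times [k/N,(k+1)/N]) \subset B_{i_k}$.

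Next I would construct $\hat F$ inductively on the intervals $[k/N,(k+1)/N]$, maintaining the condition $\hat F_{k/N}|_C = F_{k/N}|_C$. Start with $\hat F_0 := F_0$. Assuming $\hat F_t$ has been built on $[0,k/N]$ with the inductive condition, consider on $[k/N,(k+1)/N]$ the isotopy $\phi_t := F_t \circ F_{k/N}^{-1}$ restricted to the compact set $F_{k/N}(C)$; its track sits in $B_{i_k}$. By the isotopy extension theorem (Hudson's Theorem 6.2 in the PL category; in the smooth case either triangulate via Whitehead or use a bump function in $B_{i_k}$ to cut off the time-dependent vector field of $\phi_t$) extend $\phi_t$ to an ambient isotopy $\Phi_t$ of $W$ with $\Phi_{k/N} = \mathrm{id}_W$ and compact support inside $B_{i_k}$. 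Then set $\hat F_t := \Phi_t \circ \hat F_{k/N}$ for $t \in [k/N,(k+1)/N]$. Continuity at breakpoints is automatic because $\Phi_{k/N} = \mathrm{id}$, and the inductive computation
\[
\hat F_{(k+1)/N}|_C = \Phi_{(k+1)/N} \circ \hat F_{k/N}|_C = \Phi_{(k+1)/N} \circ F_{k/N}|_C = F_{(k+1)/N} \circ F_{k/N}^{-1} \circ F_{k/N}|_C = F_{(k+1)/N}|_C
\]
propagates the condition, giving $\hat F_1|_C = F_1|_C$ at the end.

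Finally I would verify the localization conclusion: for fixed $k$ and $x \in W$ the path $t \mapsto \hat F_t(x) = \Phi_t(\hat F_{k/N}(x))$ on $[k/N,(k+1)/N]$ is non-constant only if $\hat F_{k/N}(x)$ lies in the support of $\Phi$, which is contained in $B_{i_k}$. Hence the set of such $x$ is contained in $\hat F_{k/N}^{-1}(B_{i_k})$, which is the image of the open ball $B_{i_k}$ under a self-homeomorphism of $W$ and is therefore itself a topologically embedded open ball.

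The only real difficulty is the isotopy-extension step: one needs an ambient extension of the isotopy $\phi_t$ of the possibly non-manifold compact set $F_{k/N}(C)$, with support controlled inside the given ball $B_{i_k}$. That is exactly what Hudson's Theorem 6.2 delivers in the PL category, and the passage to the smooth setting the paper is actually using is standard either via Whitehead triangulations or via Thom's first isotopy lemma combined with a bump function supported in $B_{i_k}$.
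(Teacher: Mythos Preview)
The paper does not give its own proof of this lemma; it simply states it as a classical PL-topology fact following from Theorem~6.2 of Hudson (with the smooth case handled via Whitehead triangulations). So there is no paper proof to compare against, and the question is only whether your argument stands on its own.

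It does not, because of a genuine gap in the Lebesgue-number step. You claim that after choosing $N$ with $1/N<\delta$ one has, for each $k$, an index $i_k$ with
\[
F\bigl(C\times[k/N,(k+1)/N]\bigr)\subset B_{i_k}.
\]
But the Lebesgue number $\delta$ only guarantees that subsets of $C\times I$ of diameter less than $\delta$ lie in a single $F^{-1}(B_j)$. The slab $C\times[k/N,(k+1)/N]$ has diameter at least $\mathrm{diam}(C)$, which has no reason to be small; indeed in the intended application $C=f(M^n)$ is a closed $n$-manifold sitting in $W=F^{n+1}\bar\times[0,1]$ and typically cannot be contained in any single coordinate ball of $W$. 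So the containment above is simply false in general, and with it the claim that the track of $\phi_t$ on $F_{k/N}(C)$ lies in $B_{i_k}$, which is exactly what you need to get an extension $\Phi_t$ supported in that ball.

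What is actually required is a fragmentation argument: after subdividing time so that each $F_t\circ F_{k/N}^{-1}$ is close to the identity on a compact set, one must further decompose each such near-identity isotopy \emph{spatially}, using a finite cover by balls together with a partition of unity (or, in the PL setting, a triangulation), into a composition of isotopies each supported in a single ball. This spatial decomposition is the substantive content behind the citation of Hudson, and it is precisely the step your argument skips. Your final paragraph locates the difficulty in the isotopy-extension theorem, but that part is fine; the real missing idea is the fragmentation that makes the track small enough for a ball-supported extension to exist.
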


Let $f: M^n \to F^{n+1}\bar\times [0,1]$ be an $n$-knotting where $F^{n+1}$ is an $(n+1)$-dimensional 
manifold and $ F^{n+1}\bar\times [0,1]$ is an $[0,1]$-bundle over $F^{n+1}$ (trivial bundle if $F^{n+1}$ is oriented 
 and the twisted $[0,1]$-bundle over $F^{n+1}$ if $F^{n+1}$ is unorientable. In both cases the manifold is oriented). 
Let $\pi: F^{n+1}\bar\times [0,1] \to F^{n+1}$. 
By Lemma \ref{Lemma 7.1} an embedding $f$ can be assumed to be in general position with respect to $\pi$ 
and every ambient isotopy of a knotting can be decomposed into Roseman moves (on $D_M$).
If $\pi_1(F^{n+1})=0$ then essentially all results of the paper can be also proven for the 
knotting (we need $W=F^{n+1}\bar\times [0,1]$ to be simple connected in Lemma \ref{Lemma 1.12}, 
Remark \ref{Remark 3.3}, and Theorem \ref{Theorem 3.4}). 

\begin{remark}\label{Remark 7.2}
If we do not assume that $\pi_1(F^{n+1})=0$ in the case of $W^{n+2}=  F^{n+1}\times [0,1]$, 
we can still develop the theory of (co)cycle invariants by following \cite{FRS-2} where the 
notion of a reduced fundamental rack is developed (essentially one kills the action of $\pi_1(F^{n+1})$).
Then the reduced fundamental rack (or quandle) is, according to Corollary 3.5 of \cite{FRS-2}, the same as
the fundamental rack (or quandle) obtained by rack (or quandle) abstract coloring of any diagram of the knotting. 
\end{remark}

\section{Speculation on Yang-Baxter homology and invariants of knottings}\label{Section 8}

Yang-Baxter operator  can be thought as a direct generalization of right self-distributivity 
when we go from the category of sets to the category of $k$-modules.

We follow here \cite{Leb-1,Leb-2,Prz-1,Prz-2} describing the classical case $n=1$.

First we note how to get Yang-Baxter operator from a right self-distributive binary operation.
Let $(X;*)$ be a shelf and $kX$ be a free module over a commutative ring $k$ with basis $X$ (we can
call $kX$ a {\it linear shelf}). Let $V=kX$, then $V\otimes V = k(V^2)$ and the operation $*$ yields
a linear map $Y=Y_{(X;*)}: V\otimes V \to V\otimes V$ given by $Y(a,b)=(b,a*b)$. Right self-distributivity
of $*$ yields the equation of linear maps $V\otimes V \otimes V \to V\otimes V\otimes V$ as follows:
$$ \mbox{(1) }\ \ (Y\otimes Id)(Id \otimes Y)(Y\otimes Id) = (Id \otimes Y)(Y\otimes Id)(Id \otimes Y).$$
In general, the equation of type (1) is called a Yang-Baxter equation and the map $Y$ a Yang-Baxter operator.
We also often require that $Y$ is invertible. For example if $Y$ is given by invertible $*$, 
then $Y_{(X;*)}$ is invertible with $Y^{-1}_{(X;*)}(a,b)= (b\bar * a, a)$.

In our case $Y_{(X;*)}$ permutes the base $X\times X$ of $V\otimes V$, so it is called a permutation or
a set theoretical Yang-Baxter operator. Our distributive  homology, in particular our rack homology
 $(C_n,\partial^Y=\partial^{(*_0)}-\partial^{(*)})$ can be thought of as the homology of $Y$.
It was generalized from the Yang-Baxter operator coming from a self-distributive $*$ to any
set theoretical Yang-Baxter operator (coming from biracks or biquandles), \cite{CES-2}. For a general
Yang-Baxter operator, there is no general homology theory (however, compare \cite{Eis-1,Eis-2}).
The goal/hope is to define homology for any Yang-Baxter operator  and develop the homological 
invariants of $n$-knottings (it is done for $n=1$ and a set theoretical Yang-Baxter equation in \cite{CES-2}).
The simple visualization of the distributive face map $d_i^{(*)}$ from Figure 8.1, observed by 
I.Dynnikov during Przytycki's talks in Moscow in May 2012 (and slightly earlier by Victoria Lebed when 
she was writing her PhD thesis \cite{Leb-1}), easily gives a hint to homology of set theoretical Yang-Baxter homology,
and, partially, to general Yang-Baxter homology (this is studied in \cite{Prz-2}).
The homology invariants of $n$-knotting should follow, and combining the method of this paper with 
\cite{Leb-2,Prz-2} looks rather promising.
\\ \ \\
\centerline{\psfig{figure=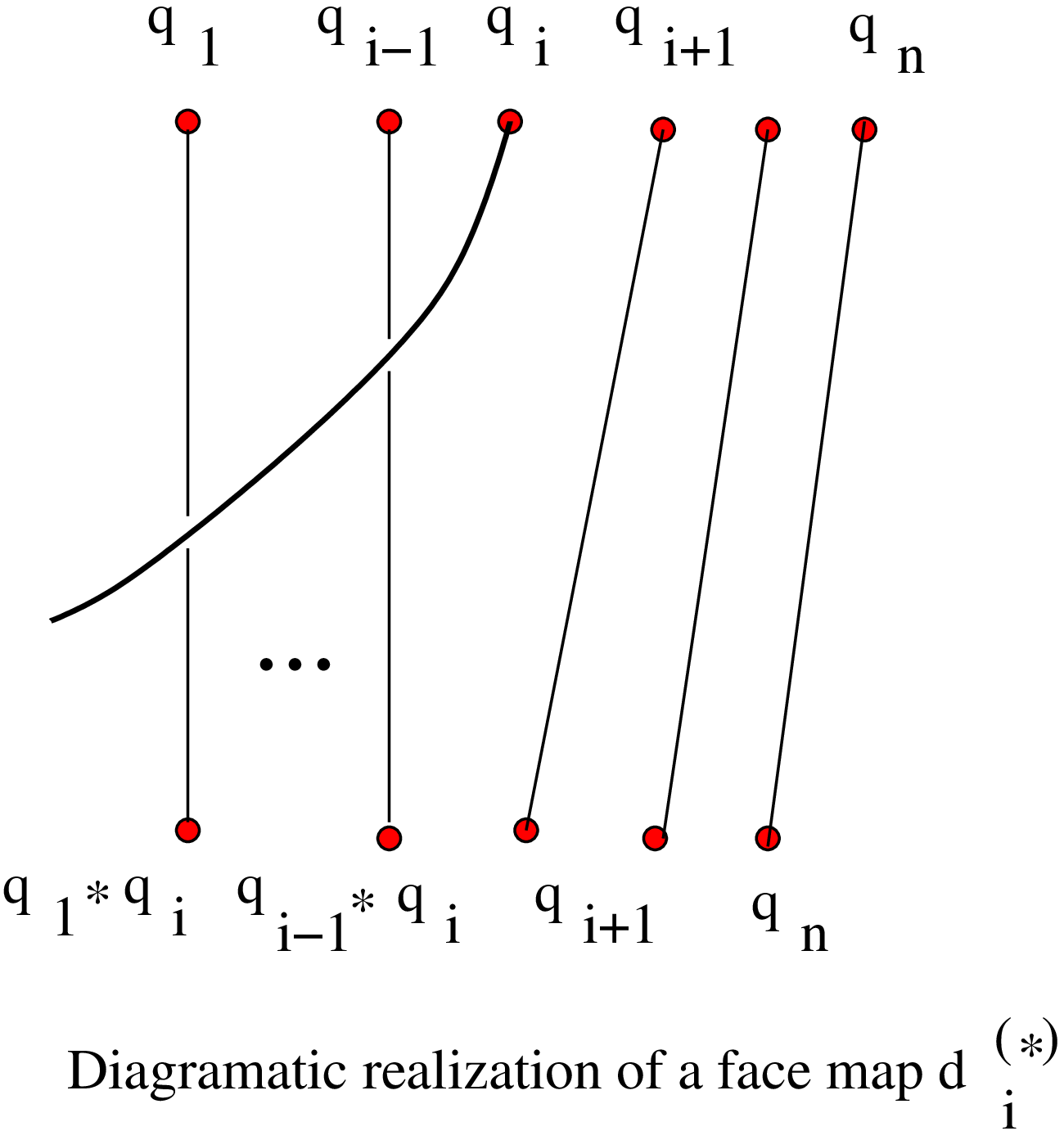,height=9.1cm}}
\\ \ \\
\centerline{Figure 8.1;  Diagrammatic visualization of  a face map gives hint to Yang-Baxter homology.}
\centerline{ For a right self-distributive $*$ we have  a face map }
\centerline{ $d_i^{(*)}(q_1,...,q_n)=(q_1*q_i,...,q_{i-1}*q_i,q_{i+1},...,q_n)$}. 
\centerline{We can also interpret the picture to be applicable to Yang-Baxter theory }
\centerline{by using Yang-Baxter operator at each crossing}

 

\section{Acknowledgments}\label{Section 9}
J.~H.~Przytycki was partially supported by the  NSA-AMS 091111 grant,
 and by the GWU REF grant.

\ \\ \ \\ \ \\
Department of Mathematics,\\
The George Washington University,\\
Washington, DC 20052\\
e-mail: {\tt przytyck@gwu.edu},\\
University of Maryland CP,\\
and University of Gda\'nsk\\
\ \\
Inst.. of Mathematics, University of Gda\'nsk,\\
e-mail: {\tt wrosicki@mat.ug.edu.pl}

\end{document}